\newtheorem{theorem}{Theorem}[section]
\newtheorem{lemma}[theorem]{Lemma}
\newtheorem{definition}[theorem]{Definition}
\newtheorem{fact}[theorem]{Fact}
\newcommand{\N}{{\mathbb N}}
\newcommand{\Hyp}{{\mathbb H}}
\newcommand{\Z}{{\mathbb Z}}
\newcommand{\R}{{\mathbb R}}
\newcommand{\Q}{{\mathbb Q}}
\newcommand{\C}{{\mathbb C}}
\newcommand{\T}{{\mathbb T}}
\begin{document}

\title{A short introduction to translation surfaces, Veech surfaces,  and Teichm\H{u}ller dynamics }
\author{  D. Massart}
\date{\today}
\maketitle
\begin{abstract}
We review the different notions about translation surfaces which are necessary to understand McMullen's classification of $\mathrm{GL}_2^+(\R)$-orbit closures in genus two. In Section \ref{definitions} we recall the different definitions of a translation surface, in increasing order of abstraction, starting with cutting and pasting plane polygons, ending with Abelian differentials. In Section \ref{moduli space} we define the moduli space of translation surfaces and explain  its stratification by the type of zeroes of the Abelian differential, the local coordinates given by the relative periods, its relationship with the moduli space of complex structures and the Teichm\H{u}ller geodesic flow. In Part \ref{gl2} we introduce the $\mathrm{GL}_2^+(\R)$-action, and define the related notions of Veech group, Teichm\H{u}ller disk, and Veech surface. In Section \ref{genus2} we explain how McMullen classifies $\mathrm{GL}_2^+(\R)$-orbit closures in genus $2$: you have orbit closures of dimension $1$ (Veech surfaces, of which a complete list is given), $2$ (Hilbert modular surfaces, of which again a complete list is given), and $3$ (the whole moduli space of complex structures). In the last section we review some recent progress in higher genus.	
\end{abstract}

keywords: translation surface,  Veech surface, Abelian differential, moduli space

AMS classification : 37D40, 30F60, 32G15

\tableofcontents
\part{What is a translation surface? }
\section{Introduction}
Translation surfaces are a great pedagogical tool at almost every level of education. At the elementary, or secondary, level, you are impeded by the lack of embedding into 3-space, but billiards provide plenty of access points into  geometry. At the undergraduate level, they help in topology, to explain quotient spaces, or in differential geometry, to explain atlases and transition maps, or in complex analysis, with meromorphic functions.
At the graduate level, the doors of abstraction burst open, and in a few light steps you get to the enchanted gardens of moduli spaces, Teichm\H{u}ller theory, and elementary algebraic geometry.  The beauty of the subject lies in the interplay between the basic (cutting and pasting little pieces of paper) and the abstract (moduli spaces of Riemann surfaces).

There are already many wonderful introductions to translation surfaces (for instance, \cite{Zorich}, \cite{FM}, \cite{HS}, \cite{Moller}, or \cite{Wright}), and this one is in no way meant as a substitute to any of them, but it could be used as a stepping stone. It is specifically aimed at beginning graduate students. 
Mostly, it consists of what, given two (or five) hours and a blackboard,  I would tell a student looking for a thesis subject and eager to know what translation surfaces are about.
 
 I have tried to include (sketches of) proofs of  two types of results: first, those that are too elementary to be included in the aforementionned introductory papers, but can still cause a good deal of head-scratching; and some, not uncommon in the subject, that  are of the ``takes genius to see it, but not that hard once you know it" variety.

I thank Erwan Lanneau for many  useful conversations, the editor of the present volume, and Pablo Montealegre for their careful reading of the manuscript, and Smail Cheboui for letting me use some of the beautiful drawings of \cite{Smail} (the ugly ones are mine).
\section{Three different definitions of a translation surface}\label{definitions}

%
\subsection{The most hands-on definition: polygons with identifications}\label{def1}
Let $P$ be a polygon in the Euclidean plane, not necessarily convex, maybe not even connected,  but with its sides pairwise parallel and of equal length. Glue each side to a parallel side of equal length. There are several ways of doing this, so let's be a bit more careful.

First, let us make the convention that we orient the sides of a polygon so that the interior of the polygon lies to the left.

Now let us identify the sides in pairs, in such a way that the resulting surface is orientable (see Figure \ref{toto} for a non-orientable example). One way to think of it is to imagine that the upside of the polygon is painted red, while the downside is painted black. The surface is orientable if it has one red side, and one black side. This means that each edge is glued to another edge in such a way that the colors match, so the arrows on the edges do not match. 

Let us assume, furthermore,  that we glue each edge to a parallel edge of equal length. Let $n$ be the number of edges. Thus there exists some permutation $\sigma$  of $\left\lbrace 1,\ldots, n \right\rbrace $ such that $v_{\sigma (i)}= \pm v_i$, for all $i=1,\ldots, n$. Since we must oppose the arrows when gluing, there are two possible cases: either $v_{\sigma (i)}= - v_i$, in which case the two edges are glued by translation (see Figure \ref{toto}, top right), or $v_{\sigma (i)}=  v_i$, in which case the edges are glued with a half-turn (see Figure \ref{toto}, bottom).

Formally, you define an equivalence relation $\sim$ on $P$, and you are considering the quotient space $X=P/\sim$. 
Although the polygon itself may not be connected, we require the resulting quotient space to be connected, just because if it is not, we study its connected components separately. 
What you get is a compact, orientable manifold of dimension two (a surface, for short).
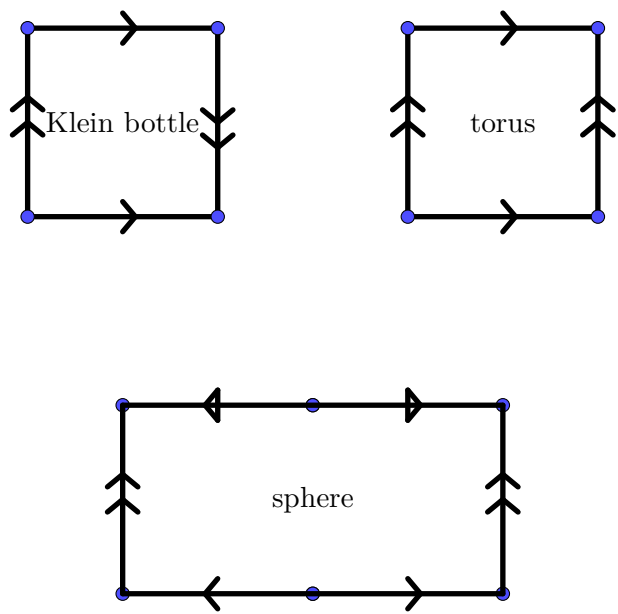
\begin{figure}[h!]
	\begin{center}
\definecolor{ududff}{rgb}{0.30196078431372547,0.30196078431372547,1}
\definecolor{uuuuuu}{rgb}{0.26666666666666666,0.26666666666666666,0.26666666666666666}
\begin{tikzpicture}[line cap=round,line join=round,>=triangle 45,x=2.5cm,y=2.5cm]
\clip(-3.5,-2.3) rectangle (3.5,2);
\draw [line width=2pt] (0,0)-- (1,0);
\draw [line width=2pt] (0.5666666666666672,0) -- (0.5,-0.08);
\draw [line width=2pt] (0.5666666666666672,0) -- (0.5,0.08);
\draw [line width=2pt] (0,1)-- (1,1);
\draw [line width=2pt] (0.5666666666666672,1) -- (0.5,0.92);
\draw [line width=2pt] (0.5666666666666672,1) -- (0.5,1.08);
\draw [line width=2pt] (0,0)-- (0,1);
\draw [line width=2pt] (0,0.6333333333333333) -- (0.08,0.5666666666666667);
\draw [line width=2pt] (0,0.6333333333333333) -- (-0.08,0.5666666666666667);
\draw [line width=2pt] (0,0.5) -- (0.08,0.4333333333333332);
\draw [line width=2pt] (0,0.5) -- (-0.08,0.4333333333333332);
\draw [line width=2pt] (1,0)-- (1,1);
\draw [line width=2pt] (1,0.6333333333333333) -- (1.08,0.5666666666666667);
\draw [line width=2pt] (1,0.6333333333333333) -- (0.92,0.5666666666666667);
\draw [line width=2pt] (1,0.5) -- (1.08,0.4333333333333332);
\draw [line width=2pt] (1,0.5) -- (0.92,0.4333333333333332);
\draw [line width=2pt] (-2,0)-- (-1,0);
\draw [line width=2pt] (-1.4333333333333331,0) -- (-1.5,-0.08);
\draw [line width=2pt] (-1.4333333333333331,0) -- (-1.5,0.08);
\draw [line width=2pt] (-2,1)-- (-1,1);
\draw [line width=2pt] (-1.4333333333333331,1) -- (-1.5,0.92);
\draw [line width=2pt] (-1.4333333333333331,1) -- (-1.5,1.08);
\draw [line width=2pt] (-2,0)-- (-2,1);
\draw [line width=2pt] (-2,0.6333333333333333) -- (-1.92,0.5666666666666667);
\draw [line width=2pt] (-2,0.6333333333333333) -- (-2.08,0.5666666666666667);
\draw [line width=2pt] (-2,0.5) -- (-1.92,0.4333333333333332);
\draw [line width=2pt] (-2,0.5) -- (-2.08,0.4333333333333332);
\draw [line width=2pt] (-1,1)-- (-1,0);
\draw [line width=2pt] (-1,0.3666666666666665) -- (-1.08,0.4333333333333332);
\draw [line width=2pt] (-1,0.3666666666666665) -- (-0.92,0.4333333333333332);
\draw [line width=2pt] (-1,0.5) -- (-1.08,0.5666666666666667);
\draw [line width=2pt] (-1,0.5) -- (-0.92,0.5666666666666667);
\begin{scriptsize}
\draw [fill=ududff] (0,0) circle (2.5pt);
\draw [fill=ududff] (1,0) circle (2.5pt);
\draw [fill=ududff] (1,1) circle (2.5pt);
\draw [fill=ududff] (0,1) circle (2.5pt);
\draw [fill=ududff] (-2,0) circle (2.5pt);
\draw [fill=ududff] (-1,0) circle (2.5pt);
\draw [fill=ududff] (-2,1) circle (2.5pt);
\draw [fill=ududff] (-1,1) circle (2.5pt);

\draw [fill=ududff] (-0.5,-2) circle (2.5pt);
\draw [fill=ududff] (0.5,-2) circle (2.5pt);
\draw [fill=ududff] (0.5,-1) circle (2.5pt);
\draw [fill=ududff] (-0.5,-1) circle (2.5pt);
\draw [fill=ududff] (-1.5,-2) circle (2.5pt);
\draw [fill=ududff] (-0.5,-2) circle (2.5pt);
\draw [fill=ududff] (-1.5,-1) circle (2.5pt);
\draw [fill=ududff] (-0.5,-1) circle (2.5pt);

\draw [line width=2pt] (-1.5,-1)-- (0.5,-1);
\draw [line width=2pt] (0.0666666666666672,-2) -- (0,-1.92);
\draw [line width=2pt] (0.0666666666666672,-2) -- (0,-2.08);

\draw [line width=2pt] (-1.0666666666666672,-2) -- (-1,-1.92);
\draw [line width=2pt] (-1.0666666666666672,-2) -- (-1,-2.08);

\draw [line width=2pt] (-1.5,-2)-- (0.5,-2);
\draw [line width=2pt] (0.0666666666666672,-1) -- (0,-0.92);
\draw [line width=2pt] (0.0666666666666672,-1) -- (0,-1.08);
\draw [line width=2pt]  (0,-0.92)-- (0,-1.08);

\draw [line width=2pt] (-1.0666666666666672,-1) -- (-1,-0.92);
\draw [line width=2pt] (-1.0666666666666672,-1) -- (-1,-1.08);
\draw [line width=2pt]  (-1,-0.92)-- (-1,-1.08);

\draw [line width=2pt] (-1.5,-1)-- (-1.5,-2);
\draw [line width=2pt] (-1.5,-1.3666666666666665) -- (-1.58,-1.4333333333333332);
\draw [line width=2pt] (-1.5,-1.3666666666666665) -- (-1.42,-1.4333333333333332);
\draw [line width=2pt] (-1.5,-1.5) -- (-1.58,-1.5666666666666667);
\draw [line width=2pt] (-1.5,-1.5) -- (-1.42,-1.5666666666666667);

\draw [line width=2pt] (0.5,-1)-- (0.5,-2);
\draw [line width=2pt] (0.5,-1.3666666666666665) -- (2-1.58,-1.4333333333333332);
\draw [line width=2pt] (0.5,-1.3666666666666665) -- (2-1.42,-1.4333333333333332);
\draw [line width=2pt] (0.5,-1.5) -- (2-1.58,-1.5666666666666667);
\draw [line width=2pt] (0.5,-1.5) -- (2-1.42,-1.5666666666666667);

\end{scriptsize}
\draw[color=black] (0.5,0.5) node {torus};
\draw[color=black] (-0.5,-1.5) node {sphere};
\draw[color=black] (-1.5,0.5) node {Klein bottle};
\end{tikzpicture}
	\end{center}
\caption{ Here the arrows indicate the gluing, not the orientation of the boundary, so arrows must match under the gluing. Note that in the case of the sphere, the edges are not glued by translation, but with a half-turn. }\label{toto}
\end{figure}

Here is why the quotient space is a manifold. 

What we need is to find, for each $x \in X$, a neighborhood $U_x$ of $x$ in $X$, and a chart $\phi_x  : U_x \longrightarrow \C$, such that 
for any $x,y$ in $X$, if $U_x \cap U_y \neq \emptyset$, then $\phi_x \circ \phi_y^{-1}$ preserves whatever structure it is that you want your manifold to come with (if you want a differentiable manifold, you require 
$\phi_x \circ \phi_y^{-1}$  to be differentiable, if you want a complex manifold, you require it to be holomorphic, and so on). 

If $x$ is the image in $X$ of an interior point $\bar{x}$ of $P$, then  $x$ has a neighborhood $U_x$ in $X$ which is the homeomorphic image in $X$ of an open neighborhood $\bar{U}_x$ of $\bar{x}$ in $P$ (hence in $\C$). We take the inverse quotient map $U_x \rightarrow \bar{U}_x$ as the chart $\phi_x$. 

Shorter and somewhat sloppier version: the quotient map, restricted to the interior of $P$, is a homeomorphism. Identify $x$ with its pre-image. Take a neighborhood $U_x$ of $x$ which is contained in the interior of $P$, and take the identity as a chart.

If $x$ is the image in $X$ of an interior point of an edge of $P$, then $x$ has only two pre-images in $P$, and $x$ has a neighborhood in $X$ whose pre-image in $P$ is the reunion of two half-disks of equal radius and parallel diameters. If the two edges are glued by translation, we define a chart by the identity on one of the half-disks, and a translation on the other half-disk.
If the two edges are glued with a half-turn, we define a chart by the identity on one of the half-disks, and a translation composed with a half-turn on the other half-disk.

If $x$ is the image in $X$ of a vertex of $P$, then all pre-images in $P$ of $x$ are vertices  $P_1, \ldots, P_k$ of  $P$, because we glue edges of equal length, and $x$ has a neighborhood $U_x$ in $X$ whose pre-image in $P$  is a union of circular sectors $S_i$, $i=1,\ldots, k$, with radius $r$ and straight boundaries $e_i$ and $f_i$, so that $f_i$ is parallel to $e_{i+1}$, and $f_k$ is parallel to $e_1$. The angle $\theta_i$ of the sector $S_i$ is the angle between two adjacent  (at $P_i$) edges of $P$. Note that since $f_k$ is parallel to $e_1$, the angles $\theta_i$ sum to a multiple of $\pi$, say $p\pi$. Furthermore, if the edges $f_k$ and  $e_1$ are glued by a translation, the angles $\theta_i$ sum to a multiple of $2\pi$. If the edges $f_k$ and  $e_1$ are glued with a half-turn, the angles $\theta_i$ sum to an odd multiple of $\pi$. Denote 
\[
\Theta_i = \sum_{j=1}^{i-1}\theta_j.
\]

We define a chart which takes each $S_i$ to a circular sector with vertex at $0$, by
\[
P_i + \rho \exp i(\theta+\Theta_i) \longmapsto  \rho \exp i\frac{2}{p}(\theta+\Theta_i), \mbox{ for } 0 \leq \rho \leq r, 0 \leq \theta \leq \theta_i .
\]

We say a point in $X$ which is the image of an interior point of $P$ is of type I, a point in $X$ which is the image of an interior point of an edge of $P$ is of type II, and a point in $X$ which is the image of a vertex 
of $P$ is of type III. 

If $x$ and $y$ are both of type I, and   $U_x \cap U_y \neq \emptyset$, then $\phi_x \circ \phi_y^{-1}$ is the identity, which preserves every structure imaginable. 

If $x$ and $y$ are both of type I or II, and   $U_x \cap U_y \neq \emptyset$, then $\phi_x \circ \phi_y^{-1}$ is either  the identity,  or a translation, or a ``half-translation" $z \mapsto -z+c$, all of  which preserve almost every structure imaginable. 

If $x$ is of type III and $y$ is  of type I, II, or III and   $U_x \cap U_y \neq \emptyset$, then $\phi_x \circ \phi_y^{-1}$ is  holomorphic (it is essentially a branch of the $p/2$-th root). If both $x$ and $y$ are of type III, 
we choose $U_x$ and $U_y$ so that $U_x \cap U_y = \emptyset$.

Thus we have defined a complex structure on $X$ (an atlas with holomorphic transition maps), but this does not tell the whole story. Denote $\Sigma= \{ x_1, \ldots, x_k \}$ the set of all images in $X$ of the vertices of $P$, then if all edges are glued by translation, $X \setminus \Sigma$ has  an atlas whose transition maps are translations, whence the name 
\textbf{translation surface\index{translation surface}}. Among the three surfaces in Figure \ref{toto}, only the torus is a translation surface.  

If some pair of edges is glued with a half turn, $X \setminus \Sigma$ has  an atlas whose transition maps take the form $z \mapsto \pm z +c$, in which case $X$ is called a \textbf{half-translation surface\index{half-translation surface}}. The sphere in Figure \ref{toto} is a half-translation surface.  The Klein bottle is not orientable, so it is neither a translation nor a half-translation surface. 

Points of type III in $X$ with a total angle $>2\pi$ will hereafter be called \textbf{singularities}. While being of type III depends on the polygon, being a singularity only depends on the quotient space $X$. 

The fact that translations and the $z \mapsto -z$ map preserve almost every structure imaginable on the plane (complex, metric, you name it) entails that translation and half-translation surfaces come with a lot of structure, and part of the appeal of this theory is that you can view it from so many different angles.  

\subsection{Main examples of translation surfaces}
Before proceeding further, let us introduce our favorite examples. Probably the first thing that comes to mind after seeing the definition is an even-sided, regular polygon. 

The surface obtained by identifying opposite sides of a square is a torus. 

The surface obtained by identifying opposite sides of a regular hexagon is also a torus, as may be seen by computing the Euler characteristic: one two-cell (the hexagon itself), three edges (one for each pair of opposite edges of the hexagon), and two vertices (if the vertices of the hexagon are cyclically numbered, all even (resp. odd)-numbered vertices are identified into one point), so the  Euler characteristic is zero. 

For $n>1$, the surface obtained by identifying opposite sides of a regular $4n$-gon has genus $n$, with all vertices identified into one point, with a total angle $(4n-2)\pi$.  The surface obtained by identifying opposite sides of a regular $4n+2$-gon has genus $n$, with two points of type III, both with 
angle $2n\pi$.

Another surface of interest is the double $(2n+1)$-gon, which, from the combinatorial viewpoint, is the same as the regular $4n$-gon, so it has  genus $n$, with all vertices identified into one point, with a total angle $(4n-2)\pi$.
See the double pentagon on Figure \ref{doublePentagone}.

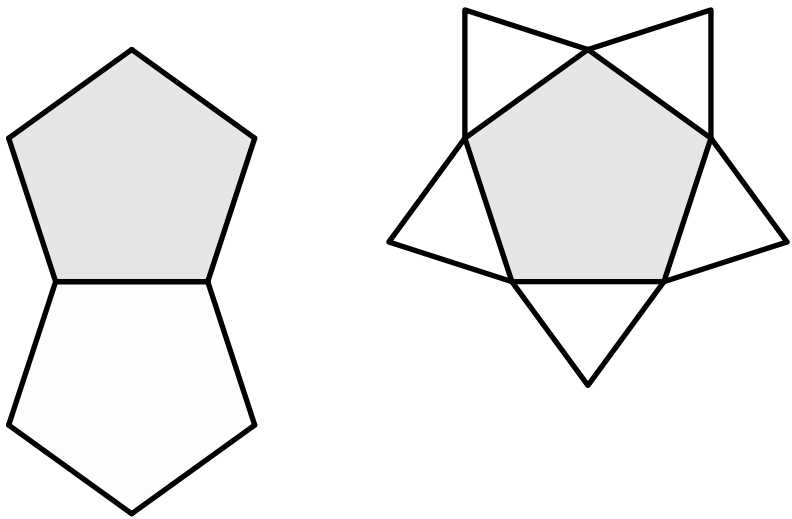
\begin{figure}[h!]
	\begin{center}
\definecolor{uuuuuu}{rgb}{0.26666666666666666,0.26666666666666666,0.26666666666666666}
\begin{tikzpicture}[line cap=round,line join=round,>=triangle 45,x=2cm,y=2cm]
\clip(-2,-2) rectangle (8,2.5);
\fill[line width=2pt,fill=black,fill opacity=0.1] (0,0) -- (1,0) -- (1.3090169943749475,0.9510565162951532) -- (0.5,1.5388417685876266) -- (-0.30901699437494734,0.9510565162951536) -- cycle;
\fill[line width=2pt,fill=black,fill opacity=0.01] (1,0) -- (0,0) -- (-0.30901699437494745,-0.9510565162951532) -- (0.5,-1.5388417685876266) -- (1.3090169943749475,-0.9510565162951536) -- cycle;
\fill[line width=2pt,fill=black,fill opacity=0.1] (3,0) -- (4,0) -- (4.3090169943749475,0.9510565162951532) -- (3.5,1.5388417685876266) -- (2.6909830056250525,0.9510565162951536) -- cycle;
\draw [line width=2pt] (0,0)-- (1,0);
\draw [line width=2pt] (1,0)-- (1.3090169943749475,0.9510565162951532);
\draw [line width=2pt] (1.3090169943749475,0.9510565162951532)-- (0.5,1.5388417685876266);
\draw [line width=2pt] (0.5,1.5388417685876266)-- (-0.30901699437494734,0.9510565162951536);
\draw [line width=2pt] (-0.30901699437494734,0.9510565162951536)-- (0,0);
\draw [line width=2pt] (1,0)-- (0,0);
\draw [line width=2pt] (0,0)-- (-0.30901699437494745,-0.9510565162951532);
\draw [line width=2pt] (-0.30901699437494745,-0.9510565162951532)-- (0.5,-1.5388417685876266);
\draw [line width=2pt] (0.5,-1.5388417685876266)-- (1.3090169943749475,-0.9510565162951536);
\draw [line width=2pt] (1.3090169943749475,-0.9510565162951536)-- (1,0);
\draw [line width=2pt] (3,0)-- (4,0);
\draw [line width=2pt] (4,0)-- (4.3090169943749475,0.9510565162951532);
\draw [line width=2pt] (4.3090169943749475,0.9510565162951532)-- (3.5,1.5388417685876266);
\draw [line width=2pt] (3.5,1.5388417685876266)-- (2.6909830056250525,0.9510565162951536);
\draw [line width=2pt] (2.6909830056250525,0.9510565162951536)-- (3,0);
\draw [line width=2pt] (4.3090169943749475,0.9510565162951532)-- (4.309016994374948,1.8017073246471944)-- (3.5,1.5388417685876266)-- (2.690983005625054,1.8017073246471922)-- (2.6909830056250525,0.9510565162951536)-- (2.190983005625053,0.26286555605956646)-- (3,0)-- (3.5,-0.6881909602355866)-- (4,0)-- (4.8090169943749475,0.26286555605956646);
\draw [line width=2pt] (4.8090169943749475,0.26286555605956646)-- (4.3090169943749475,0.9510565162951532);

\end{tikzpicture}
\end{center}
\caption{the double pentagon: different polygons, same surface}\label{doublePentagone}
\end{figure}

Our second main example is a generalization of the square: imagine that instead of just one square, you have a finite collection of same-sized squares, each of which has its  edges labeled ``right", ``left", ``top", ``bottom", and glue each right (resp. left) edge with a left (resp. right) edge, and each top (resp. bottom) edge with a bottom (resp. top) edge. Such surfaces are called \textbf{square-tiled\index{square-tiled surface}}, or sometimes \textbf{origami\index{origami}}. We have already seen the one-squared surface, which is a torus. Two-squared surfaces are also tori (compute the Euler characteristic). The first examples of genus $2$ are three-squared (see Figure \ref{3squared}).
They have only one point of type III, with total angle $6\pi$. A four-squared, genus $2$ surface, with two points of type III, each with total angle $4\pi$, is shown in Figure \ref{4squared}.

Note that according to common usage, square-tiled surfaces are translation surfaces, so the Klein bottle and the sphere in Figure \ref{toto} are not square-tiled surfaces, even though they are actually tiled by squares. 

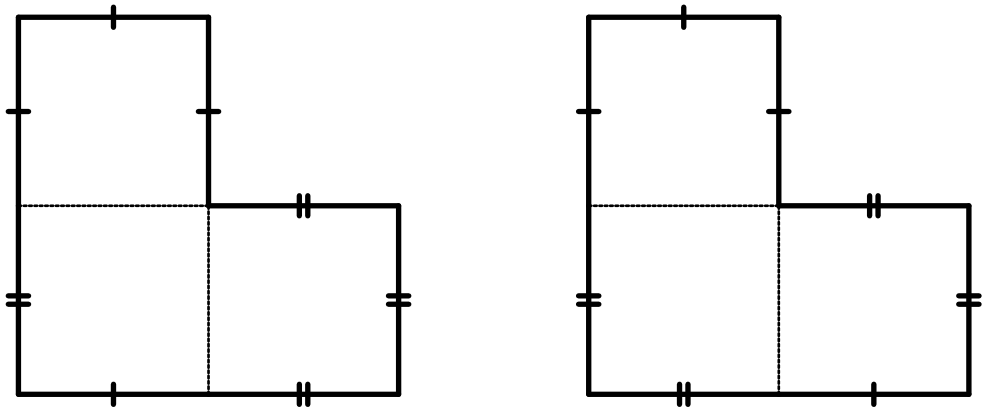
\begin{figure}[h!]
	\begin{center}
\definecolor{ududff}{rgb}{0.30196078431372547,0.30196078431372547,1}
\begin{tikzpicture}[line cap=round,line join=round,>=triangle 45,x=2.5cm,y=2.5cm]
\clip(-0.5,-0.5) rectangle (6,3);
\draw [line width=2pt] (0,0)-- (0,1);
\draw [line width=2pt] (-0.05333333333333336,0.47777777777777775) -- (0.05333333333333336,0.47777777777777775);
\draw [line width=2pt] (-0.05333333333333336,0.5222222222222221) -- (0.05333333333333336,0.5222222222222221);
\draw [line width=2pt] (0,1)-- (0,2);
\draw [line width=2pt] (-0.05333333333333336,1.5) -- (0.05333333333333336,1.5);
\draw [line width=2pt] (2,0)-- (2,1);
\draw [line width=2pt] (1.9466666666666663,0.47777777777777775) -- (2.0533333333333332,0.47777777777777775);
\draw [line width=2pt] (1.9466666666666663,0.5222222222222221) -- (2.0533333333333332,0.5222222222222221);
\draw [line width=2pt] (1,1)-- (1,2);
\draw [line width=2pt] (0.9466666666666665,1.5) -- (1.0533333333333332,1.5);
\draw [line width=2pt] (0,2)-- (1,2);
\draw [line width=2pt] (0.5,2.0533333333333332) -- (0.5,1.946666666666667);
\draw [line width=2pt] (1,1)-- (2,1);
\draw [line width=2pt] (1.4777777777777774,1.053333333333333) -- (1.4777777777777774,0.9466666666666665);
\draw [line width=2pt] (1.5222222222222217,1.053333333333333) -- (1.5222222222222217,0.9466666666666665);
\draw [line width=2pt] (1,0)-- (2,0);
\draw [line width=2pt] (1.4777777777777774,0.053333333333333274) -- (1.4777777777777774,-0.053333333333333274);
\draw [line width=2pt] (1.5222222222222217,0.053333333333333274) -- (1.5222222222222217,-0.053333333333333274);
\draw [line width=2pt] (0,0)-- (1,0);
\draw [line width=2pt] (0.5,0.053333333333333274) -- (0.5,-0.053333333333333274);
\draw [line width=2pt] (3,0)-- (3,1);
\draw [line width=2pt] (2.9466666666666668,0.47777777777777775) -- (3.0533333333333337,0.47777777777777775);
\draw [line width=2pt] (2.9466666666666668,0.5222222222222221) -- (3.0533333333333337,0.5222222222222221);
\draw [line width=2pt] (3,1)-- (3,2);
\draw [line width=2pt] (2.9466666666666668,1.5) -- (3.0533333333333337,1.5);
\draw [line width=2pt] (5,0)-- (5,1);
\draw [line width=2pt] (4.946666666666666,0.47777777777777775) -- (5.053333333333334,0.47777777777777775);
\draw [line width=2pt] (4.946666666666666,0.5222222222222221) -- (5.053333333333334,0.5222222222222221);
\draw [line width=2pt] (4,1)-- (4,2);
\draw [line width=2pt] (3.9466666666666663,1.5) -- (4.053333333333333,1.5);
\draw [line width=2pt] (3,2)-- (4,2);
\draw [line width=2pt] (3.5,2.0533333333333332) -- (3.5,1.946666666666667);
\draw [line width=2pt] (4,1)-- (5,1);
\draw [line width=2pt] (4.477777777777778,1.053333333333333) -- (4.477777777777778,0.9466666666666665);
\draw [line width=2pt] (4.522222222222222,1.053333333333333) -- (4.522222222222222,0.9466666666666665);
\draw [line width=2pt] (4,0)-- (5,0);
\draw [line width=2pt] (4.5,0.053333333333333274) -- (4.5,-0.053333333333333274);
\draw [line width=2pt] (3,0)-- (4,0);
\draw [line width=2pt] (3.477777777777777,0.053333333333333274) -- (3.477777777777777,-0.053333333333333274);
\draw [line width=2pt] (3.5222222222222217,0.053333333333333274) -- (3.5222222222222217,-0.053333333333333274);
\draw [line width=1pt,dash pattern=on 1pt off 1pt] (1,0)-- (1,1);
\draw [line width=1pt,dash pattern=on 1pt off 1pt] (0,1)-- (1,1);
\draw [line width=1pt,dash pattern=on 1pt off 1pt] (4,0)-- (4,1);
\draw [line width=1pt,dash pattern=on 1pt off 1pt] (3,1)-- (4,1);
\end{tikzpicture}
	\end{center}
\caption{Two different three-squared surfaces of genus two, with $St(3)$ on the left}\label{3squared}
\end{figure}

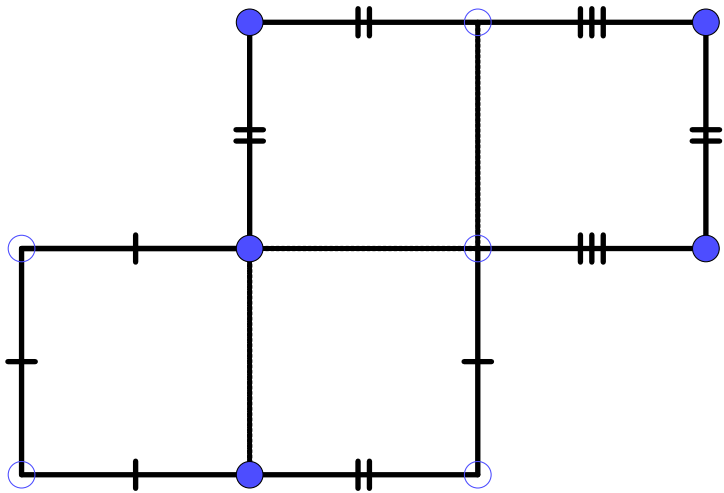
\begin{figure}[h!]
	\begin{center}
\definecolor{ududff}{rgb}{0.30196078431372547,0.30196078431372547,1}
\begin{tikzpicture}[line cap=round,line join=round,>=triangle 45,x=1.5cm,y=1.5cm]
\clip(-2.5,-1) rectangle (6.76,5);
\draw [line width=2pt] (0,0)-- (2,0);
\draw [line width=2pt] (1,0.12) -- (1,-0.12);
\draw [line width=2pt] (4,0)-- (2,0);
\draw [line width=2pt] (3.05,-0.12) -- (3.05,0.12);
\draw [line width=2pt] (2.95,-0.12) -- (2.95,0.12);
\draw [line width=2pt] (0,0)-- (0,2);
\draw [line width=2pt] (-0.12,1) -- (0.12,1);
\draw [line width=2pt] (0,2)-- (2,2);
\draw [line width=2pt] (1,2.12) -- (1,1.88);
\draw [line width=2pt] (2,2)-- (2,4);
\draw [line width=2pt] (1.88,2.95) -- (2.12,2.95);
\draw [line width=2pt] (1.88,3.05) -- (2.12,3.05);
\draw [line width=2pt] (2,4)-- (4,4);
\draw [line width=2pt] (2.95,4.12) -- (2.95,3.88);
\draw [line width=2pt] (3.05,4.12) -- (3.05,3.88);
\draw [line width=2pt] (4,4)-- (6,4);
\draw [line width=2pt] (4.9,4.12) -- (4.9,3.88);
\draw [line width=2pt] (5,4.12) -- (5,3.88);
\draw [line width=2pt] (5.1,4.12) -- (5.1,3.88);
\draw [line width=2pt] (6,4)-- (6,2);
\draw [line width=2pt] (6.12,3.05) -- (5.88,3.05);
\draw [line width=2pt] (6.12,2.95) -- (5.88,2.95);
\draw [line width=2pt] (6,2)-- (4,2);
\draw [line width=2pt] (5.1,1.88) -- (5.1,2.12);
\draw [line width=2pt] (5,1.88) -- (5,2.12);
\draw [line width=2pt] (4.9,1.88) -- (4.9,2.12);
\draw [line width=2pt] (4,2)-- (4,0);
\draw [line width=2pt] (4.12,1) -- (3.88,1);
\draw [line width=2pt,dash pattern=on 1pt off 1pt] (4,2)-- (4,4);
\draw [line width=2pt,dash pattern=on 1pt off 1pt] (2,2)-- (4,2);
\draw [line width=2pt,dash pattern=on 1pt off 1pt] (2,2)-- (2,0);
\draw [line width=2pt] (0,2)-- (2,2);
\draw [color=ududff] (0,0) circle (5pt);
\draw [fill=ududff] (2,0) circle (5pt);
\draw [color=ududff] (4,0) circle (5pt);
\draw [color=ududff] (4,2) circle (5pt);
\draw [fill=ududff] (6,2) circle (5pt);
\draw [fill=ududff] (6,4) circle (5pt);
\draw [color=ududff] (4,4) circle (5pt);
\draw [fill=ududff] (2,4) circle (5pt);
\draw [fill=ududff] (2,2) circle (5pt);
\draw [color=ududff] (0,2) circle (5pt);
\end{tikzpicture}
	\end{center}
\caption{$St(4)$, a  four-squared surface of genus $2$, with two singularities}\label{4squared}
\end{figure}
Unlike regular polygons, which are somewhat few and far between, square-tiled surfaces are swarming all over the place. In fact, any translation surface may be approximated, in a sense that will be made precise later, by square-tiled surfaces, just because any line may be approximated uniformly by a stair-shaped broken line made of horizontal and vertical segments. 

A useful feature of square-tiled surfaces is that they are covers of the square torus, ramified over one point. Of course there is nothing special about the square here, each regular polygon comes with its own family of ramified covers. 

It is usually not a good idea to try to vizualize the surface $X$ in space, if only because, having non-positive curvature, it does not embed isometrically in $\R^3$. Computing the Euler characteristic is the way to go. It is interesting, however, when it lets you vizualize a decomposition of the surface into flat cylinders. Figures \ref{St3, stage 1}, \ref{St3, stage 2}, and \ref{St3, stage 3} show a topological embedding into $\R^3$ of the surface on the left of Figure \ref{3squared}, called $St(3)$ after \cite{S}. Figures \ref{St4, stage 1 }, 
\ref{St4, stage 2 }, and \ref{St4, stage 3} show a topological embedding into $\R^3$ of the surface of Figure \ref{4squared}, called $St(4)$ after \cite{S}.

\begin{figure}[h!]
	\begin{center}
		\begin{minipage}[b]{0.5\linewidth}
			\begin{tikzpicture}[scale=1.5]
			
			\draw[color=blue,line width=1.5pt] (0,0) -- (2,0) ;
			\draw[color=blue,line width=1.5pt] (0,2) -- (2,2);
			\draw[color=green,line width=1.5pt] (2,0) -- (4,0);
			\draw[color=green,line width=1.5pt] (2,4) -- (4,4);
			\draw[color=brown,line width=1.5pt] (2,2) -- (2,4);
			\draw[color=brown,line width=1.5pt] (4,2) -- (4,4);
			\draw[color=violet,line width=1.5pt] (0,0) -- (0,2);
			\draw[color=violet,line width=1.5pt] (4,0) -- (4,2);
			\draw[dotted,line width=2pt,red] (2,2) -- (4,2);
			\draw[dotted,line width=2pt] (2,0) -- (2,2);
			\draw (0,0) node {$\bullet$};
			\draw (2,0) node {$\bullet$};
			\draw (4,0) node {$\bullet$};
			\draw (2,2) node {$\bullet$};
			\draw (2,4) node {$\bullet$};
			\draw (0,2) node {$\bullet$};
			\draw (4,2) node {$\bullet$};
			\draw (4,4) node {$\bullet$};
			\end{tikzpicture}
		\end{minipage}\hfil
		\begin{minipage}[b]{0.4\linewidth}   
			\begin{tikzpicture}[scale=1.3]
			\draw[color=blue,line width=1.5pt] (0,0) -- (2,0) ;
			\draw[color=blue,line width=1.5pt] (0,2) -- (2,2);
			\draw[color=green,line width=1.5pt] (2,0) -- (4,0);
			\draw[color=green,line width=1.5pt] (2,5) -- (4,5);
			\draw[color=violet,line width=1.5pt] (0,0) -- (0,2);
			\draw[color=violet,line width=1.5pt] (4,0) -- (4,2);
			\draw[color=brown,line width=1.5pt] (2,3) -- (2,5);
			\draw[color=brown,line width=1.5pt] (4,3) -- (4,5);
			\draw[line width=2pt,red] (2,3) -- (4,3);
			\draw[line width=2pt,red] (2,2) -- (4,2);
			\draw[dotted,line width=2pt] (2,0) -- (2,2);
			\draw (0,0) node {$\bullet$};
			\draw (2,0) node {$\bullet$};
			\draw (4,0) node {$\bullet$};
			\draw (2,2) node {$\bullet$};
			\draw (2,5) node {$\bullet$};
			\draw (0,2) node {$\bullet$};
			\draw (4,2) node {$\bullet$};
			\draw (4,5) node {$\bullet$};
			\draw (2,3) node {$\bullet$};
			\draw (4,3) node {$\bullet$};
			\end{tikzpicture}
		\end{minipage}
	\end{center}
\caption{Identifications for the surface $St(3)$, stage 1}\label{St3, stage 1}
\end{figure}

\begin{figure}[h!]
	\begin{center}
		\begin{tikzpicture}
		
		\draw [line width=1pt, violet] (0,0) ellipse (0.6cm and 1.5cm);
		\draw (4,1.5) [dotted]arc [start angle=90, end angle=270,
		x radius=0.6cm, y radius=1.5cm];
		\draw (4,1.5) [line width=0.8pt]arc [start angle=90, end angle=-90,x radius=0.6cm, y radius=1.5cm];
		\draw [line width=0.8pt,blue] (0,1.5)-- (4,1.5);
		\draw [line width=0.8pt] (0,-1.5)-- (4,-1.5);
		\draw [line width=0.8pt] (4.15,-1.5)-- (9,-1.5);
		\draw [line width=1pt, red](4.15,1.5) .. controls (6,2) and (7,2).. (9,1.5);
		\draw [line width=1pt,green](4.15,1.5) .. controls (6,1) and (7,1).. (9,1.5);
		\draw (9,1.5) [dotted,violet, line width=1.2pt]arc [start angle=90, end angle=270,
		x radius=0.6cm, y radius=1.5cm];
		\draw (9,1.5) [line width=1.2pt, violet]arc [start angle=90, end angle=-90,x radius=0.6cm, y radius=1.5cm];

		\draw [line width=0.8pt, green] (6.5,6) ellipse (1.5cm and 0.6cm);
		\draw (8,3) [dotted, line width=1pt, red]arc [start angle=0, end angle=180, x radius=01.5cm, y radius=0.6cm];
		\draw (8,3) [line width=1pt, red]arc [start angle=0, end angle=-180, x radius=1.5cm, y radius=0.6cm];
		\draw [line width=1.2pt] (5,3)-- (5,6);
		\draw [line width=1.2pt, brown] (8,3)-- (8,6);
		\draw (0,1.45) node {$\bullet$};
		\draw (4.2,1.45) node {$\bullet$};
		\draw (8.8,1.45) node {$\bullet$};
		\draw (8,3) node {$\bullet$};
		\draw (8,6) node {$\bullet$};
		\end{tikzpicture}
	\end{center}
\caption{Identifications for the surface $St(3)$, stage 2}\label{St3, stage 2}
\end{figure}

\begin{figure}[h!]
	\begin{center}
		\begin{minipage}[b]{0.5\linewidth}
			\begin{tikzpicture}[scale=0.9]
			
			\draw (0,0) [line width=0.7pt, blue] arc [start angle=40, end angle=-220,x radius=2.5cm, y radius=3.5cm];
			\draw [line width=1pt, green] (-0.9,-0.15) ellipse (1cm and 0.3cm);
			\draw [line width=1pt, red] (-2.9,-0.15) ellipse (1cm and 0.3cm);
			\draw [line width=1pt](-1.8,-1.5) .. controls +(-0.5,-0.7) and +(-0.5,0.7).. (-1.7,-4.3);
			\draw [line width=1pt](-1.9,-1.7) .. controls +(0.5,-0.5) and +(0.5,0.7).. (-1.9,-4);
			
			\draw [line width=1pt, green] (-0.9,1) ellipse (1cm and 0.3cm);
			\draw [line width=1pt, red] (-2.9,1) ellipse (1cm and 0.3cm);
			\draw (0.1,1) [line width=0.7pt] arc [start angle=-30, end angle=210,x radius=2.3cm, y radius=3.5cm];
			
			\draw [line width=0.8pt, brown](-1.9,1) .. controls +(0.5,0.7) and +(0.5,0).. (-1.9,4);
			\draw [line width=0.8pt, brown](-1.9,1) .. controls +(-0.5,0.7) and +(-0.5,0).. (-1.9,4);
			\draw (-1.9,-0.2) [dotted, line width=1.2pt, violet] arc [start angle=90, end angle=270,	x radius=0.3cm, y radius=0.75cm];
			\draw (-1.9,-0.2) [line width=0.8pt, violet]arc [start angle=90, end angle=-90,x radius=0.3cm, y radius=0.75cm];
			\draw (-1.9,-0.15) node {$\bullet$};
			\draw (-1.9,1) node {$\bullet$};
			\end{tikzpicture}
		\end{minipage}\hfil
		\begin{minipage}[b]{0.25\linewidth}   
			
			\begin{tikzpicture}[scale=0.9]
			
			\draw (0,0) [line width=0.7pt, blue] arc [start angle=40, end angle=-220,x radius=2.5cm, y radius=3.5cm];
			\draw [line width=1pt, green] (-0.9,-0.17) ellipse (1cm and 0.3cm);
			\draw [line width=1pt, red] (-2.9,-0.15) ellipse (1cm and 0.3cm);
			\draw [line width=1pt](-1.8,-1.5) .. controls +(-0.5,-0.7) and +(-0.5,0.7).. (-1.7,-4.3);
			\draw [line width=1pt](-1.9,-1.7) .. controls +(0.5,-0.5) and +(0.5,0.7).. (-1.9,-4);

			\draw (0.1,-0.1) [line width=0.7pt] arc [start angle=-30, end angle=210,x radius=2.3cm, y radius=3.5cm];
			
			\draw [line width=0.8pt, brown](-1.9,-0.1) .. controls +(0.5,0.7) and +(0.5,0).. (-1.9,3);
			\draw [line width=0.8pt, brown](-1.9,-0.1) .. controls +(-0.5,0.7) and +(-0.5,0).. (-1.9,3);
			\draw (-1.9,-0.2) [dotted, line width=1.2pt, violet] arc [start angle=90, end angle=270,	x radius=0.3cm, y radius=0.75cm];
			\draw (-1.9,-0.2) [line width=0.8pt, violet]arc [start angle=90, end angle=-90,x radius=0.3cm, y radius=0.75cm];
			\draw (-1.9,-0.15) node {$\bullet$};
			
			\end{tikzpicture}
		\end{minipage}
	\end{center}
\caption{Identifications for the surface $St(3)$, stage 3}\label{St3, stage 3}
\end{figure}

\begin{figure}[h!]
	\begin{center}
		\begin{minipage}[b]{0.6\linewidth}
			\begin{tikzpicture}
			
			\draw[color=blue,line width=1.5pt] (0,0) -- (2,0) ;
			\draw[color=blue,line width=1.5pt] (0,2) -- (2,2);
			\draw[color=green,line width=1.5pt] (2,0) -- (4,0);
			\draw[color=green,line width=1.5pt] (2,4) -- (4,4);
			\draw[color=brown,line width=1.5pt] (2,2) -- (2,4);
			\draw[color=brown,line width=1.5pt] (6,2) -- (6,4);
			\draw[color=orange,line width=1.5pt] (0,0) -- (0,2);
			\draw[color=orange,line width=1.5pt] (4,0) -- (4,2);
			\draw[color=cyan,line width=1.5pt] (4,2) -- (6,2);
			\draw[color=cyan,line width=1.5pt] (4,4) -- (6,4);
			\draw[dotted,line width=2pt,red] (2,2) -- (4,2);
			\draw[dotted,line width=2pt] (2,0) -- (2,2);
			
			\draw [fill=red!100] (0,0) circle (3pt);
			\draw [fill=red!100] (0,2) circle (3pt);
			\draw [fill=red!100] (4,0) circle (3pt);
			\draw [fill=red!100] (4,2) circle (3pt);
			\draw [fill=red!100] (4,4) circle (3pt);
			
			\draw [fill] (2,0) circle (3pt);
			\draw [fill] (2,2) circle (3pt);
			\draw [fill] (2,4) circle (3pt);
			\draw [fill] (6,2) circle (3pt);
			\draw [fill] (6,4) circle (3pt);
			
			\end{tikzpicture}
		\end{minipage}\hfil
		\begin{minipage}[b]{0.4\linewidth}   
			
			\begin{tikzpicture}
			
			\draw[color=blue,line width=1.5pt] (0,0) -- (2,0) ;
			\draw[color=blue,line width=1.5pt] (0,2) -- (2,2);
			\draw[color=green,line width=1.5pt] (2,0) -- (4,0);
			\draw[color=green,line width=1.5pt] (2,5) -- (4,5);
			\draw[color=orange,line width=1.5pt] (0,0) -- (0,2);
			\draw[color=orange,line width=1.5pt] (4,0) -- (4,2);
			\draw[color=brown,line width=1.5pt] (2,3) -- (2,5);
			\draw[color=brown,line width=1.5pt] (6,3) -- (6,5);
			\draw[color=cyan,line width=1.5pt] (4,3) -- (6,3);
			\draw[color=cyan,line width=1.5pt] (4,5) -- (6,5);
			\draw[line width=2pt,red] (2,3) -- (4,3);
			\draw[line width=2pt,red] (2,2) -- (4,2);
			\draw[dotted,line width=2pt] (2,0) -- (2,2);
			\draw [fill=red!100] (0,0) circle (3pt);
			\draw [fill=red!100] (0,2) circle (3pt);
			\draw [fill=red!100] (4,0) circle (3pt);
			\draw [fill=red!100] (4,2) circle (3pt);
			\draw [fill=red!100] (4,5) circle (3pt);
			\draw [fill=red!100] (4,3) circle (3pt);
			
			\draw [fill] (2,0) circle (3pt);
			\draw [fill] (2,2) circle (3pt);
			\draw [fill] (2,5) circle (3pt);
			\draw [fill] (6,3) circle (3pt);
			\draw [fill] (6,5) circle (3pt);
			\draw [fill] (2,3) circle (3pt);
			\end{tikzpicture}
		\end{minipage}
	\end{center}
\caption{Identifications for the surface $St(4)$, stage 1}
\label{St4, stage 1 }
\end{figure}

\begin{figure}[h!]
	\begin{center}
		\begin{tikzpicture}
		
		\draw [line width=1pt, orange] (0,0) ellipse (0.6cm and 1.5cm);
		\draw (4,1.5) [dotted]arc [start angle=90, end angle=270,
		x radius=0.6cm, y radius=1.5cm];
		\draw (4,1.5) [line width=0.8pt]arc [start angle=90, end angle=-90,x radius=0.6cm, y radius=1.5cm];
		\draw [line width=0.8pt,blue,line width=1.5pt] (0,1.5)-- (4,1.5);
		\draw [line width=0.8pt] (0,-1.5)-- (4,-1.5);
		\draw [line width=0.8pt] (4.15,-1.5)-- (9,-1.5);
		\draw [line width=1pt, red](4.15,1.5) .. controls +(1.85,0.5) and +(-2,0.5).. (9,1.5);
		\draw [line width=1pt,green](4.15,1.5) .. controls +(1.85,-0.5) and +(-2,-0.5).. (9,1.5);
		\draw (9,1.5) [dotted,orange, line width=1.2pt]arc [start angle=90, end angle=270,
		x radius=0.6cm, y radius=1.5cm];
		\draw (9,1.5) [line width=1.2pt, orange] arc [start angle=90, end angle=-90,x radius=0.6cm, y radius=1.5cm];
		\draw [fill=red!100] (0,1.5) circle (3pt);
		\draw [fill=red!100] (9,1.5) circle (3pt);
		\draw [fill] (4.15,1.5) circle (3pt);

		\draw [line width=1pt, brown] (13,4) ellipse (0.6cm and 1.5cm);
		\draw (8,5.5) arc [start angle=90, end angle=270,
		x radius=0.6cm, y radius=1.5cm];
		\draw (8,5.5) [dotted,line width=0.8pt]arc [start angle=90, end angle=-90,x radius=0.6cm, y radius=1.5cm];
		\draw [line width=0.8pt] (4,5.5)-- (8,5.5);
		\draw [line width=0.8pt] (8,5.5)-- (13,5.5);
		\draw [line width=1.5pt,cyan] (8.15,2.5)-- (13,2.5);
		\draw [line width=1.5pt, red,dotted](4,2.5) .. controls +(1.5,0.3) and +(-1.5,0.3).. (8,2.5);
		\draw [line width=1pt,green](4,2.5) .. controls +(1.5,-0.2) and +(-1.5,-0.2).. (8,2.5);
		\draw (4,5.5) [dotted,brown, line width=1.2pt]arc [start angle=90, end angle=-90,x radius=0.6cm, y radius=1.5cm];
		\draw (4,5.5) [line width=1.2pt, brown] arc [start angle=90, end angle=270,x radius=0.6cm, y radius=1.5cm];
		
		\draw [fill] (4,2.5) circle (3pt);
		\draw [fill] (13,2.5) circle (3pt);
		\draw [fill=red!100] (8.1,2.5) circle (3pt);
		
		\end{tikzpicture}
	\end{center}
\caption{Identifications for the surface $St(4)$, stage 2}
\label{St4, stage 2 }
\end{figure}

\begin{figure}[h!]
	\begin{center}
		\begin{minipage}[b]{0.5\linewidth}
			\begin{tikzpicture}
			\draw (0,0) [blue,line width=1pt] arc [start angle=40, end angle=-220,x radius=2.53cm, y radius=3.5cm];
			\draw [line width=1pt, red](-3.9,0) .. controls +(1,0.7) and +(-1.5,0.7).. (0,0);
			\draw [line width=1pt,green](-3.9,0) .. controls +(1,-0.7) and +(-1.5,-0.7).. (0,0);
			\draw [line width=1pt, orange](0,0) .. controls +(0.7,-1) and +(1,-0.8).. (-1.9,-1.72);
			\draw [line width=1pt,dotted, orange](0,0) .. controls +(-1,0.5) and +(-0.5,1).. (-1.9,-1.7);
			\draw [line width=1pt](-1.8,-1.5) .. controls +(-0.5,-0.7) and +(-0.5,0.7).. (-1.7,-4.3);
			\draw [line width=1pt](-1.9,-1.7) .. controls +(0.5,-0.5) and +(0.5,0.7).. (-1.9,-4);
			
			\draw (0,2) [cyan,line width=1pt] arc [start angle=-40, end angle=220,x radius=2.53cm, y radius=3.5cm];
			\draw [line width=1.5pt,dotted, red](-3.9,2) .. controls +(1,0.7) and +(-1.5,0.7).. (0,2);
			\draw [line width=1pt,green](-3.9,2) .. controls +(1,-0.7) and +(-1.5,-0.7).. (0,2);
			
			\draw [line width=1pt](-1.8,3.1) .. controls +(-0.5,0.7) and +(-0.5,-0.7).. (-1.7,6.2);
			\draw [line width=1pt](-1.9,3.3) .. controls +(0.5,0.5) and
			+(0.5,-0.7).. (-1.8,6);
			
			\draw [line width=1.3pt,dotted, brown](-3.9,2) .. controls +(-0.2,0.3) and +(-1,0.3).. (-1.9,3.35);
			\draw [line width=1.5pt, brown](-3.9,2) .. controls +(0.5,-0.3) and +(-0.4,0.3).. (-1.9,3.35);
			\draw [fill] (-3.9,2) circle (2pt);
			\draw [fill=red!100] (0,2) circle (2pt);
			\draw [fill] (-3.9,0) circle (2pt);
			\draw [fill=red!100] (0,0) circle (2pt);
			\end{tikzpicture}
		\end{minipage}\hfil
		\begin{minipage}[b]{0.25\linewidth}   
			
			\begin{tikzpicture}
			\draw (0,0) [blue,line width=1pt] arc [start angle=40, end angle=-220,x radius=2.53cm, y radius=3.5cm];
			\draw [line width=1pt,dotted, red](-3.9,0) .. controls +(1,0.7) and +(-1.5,0.7).. (0,0);
			\draw [line width=1pt,green](-3.9,0) .. controls +(1,-0.7) and +(-1.5,-0.7).. (0,0);
			\draw [line width=1pt, orange](0,0) .. controls +(0.7,-1) and +(1,-0.8).. (-1.9,-1.72);
			\draw [line width=1pt,dotted, orange](0,0) .. controls +(-1,0.5) and +(-0.5,1).. (-1.9,-1.7);
			\draw [line width=1pt](-1.8,-1.5) .. controls +(-0.5,-0.7) and +(-0.5,0.7).. (-1.7,-4.3);
			\draw [line width=1pt](-1.9,-1.7) .. controls +(0.5,-0.5) and +(0.5,0.7).. (-1.9,-4);
			\draw (0,0) [cyan,line width=1pt] arc [start angle=-40, end angle=220,x radius=2.53cm, y radius=3.5cm];
			\draw [line width=1pt](-1.8,1.1) .. controls +(-0.5,0.7) and +(-0.5,-0.7).. (-1.7,4.2);
			\draw [line width=1pt](-1.9,1.3) .. controls +(0.5,0.5) and +(0.5,-0.7).. (-1.8,4);
			
			\draw [line width=1.3pt,dotted, brown](-3.9,0) .. controls +(-0.2,0.3) and +(-1,0.3).. (-1.9,3.35-2);
			\draw [line width=1.5pt, brown](-3.9,0) .. controls +(0.5,-0.3) and +(-0.4,0.3).. (-1.9,3.35-2);
			\draw [fill] (-3.9,0) circle (2pt);
			\draw [fill=red!100] (0,0) circle (2pt);
			
			\end{tikzpicture}
		\end{minipage}
	\end{center}
\caption{Identifications for the surface $St(4)$, stage 3}
\label{St4, stage 3}
\end{figure}
\subsection{Definition through an atlas}
What if we took the property we have just proved as a definition? Let us say that a translation surface is a compact manifold $X$ of dimension two, such that there exists a finite subset 
$\Sigma= \{ x_1, \ldots, x_k \}$, called the singular set of $X$, and an atlas of $X$, such that the transition map between any two charts whose domains do not meet $\Sigma$ is a translation, and the transition map between a chart whose domain meets the singular set, and a chart whose domain does not, is $z \mapsto z^k$ for some $k \in \N$.
 Is that an equivalent definition? Meaning, from this data, can we extract a polygon with identifications, so that when we perform the identifications, we get back the translation atlas? 

Well, among the many structures translations preserve, there  is the Euclidean metric. Therefore we can equip our surface $X \setminus \Sigma$ with a Riemannian metric which is locally Euclidean, 
so its geodesics are locally straight lines. The Riemannian metric may not extend to the whole of $X$, but the distance function does. In particular we can draw geodesics between any two singularities 
(elements of $\Sigma$). 

Now, draw geodesics between singularities, as many  (but finitely many) of them as you like, as long as the connected components of the complement in $X$ of those geodesics are simply connected. 
Being simply connected, they may be developed to the plane, to polygons. Then, apply the construction of Subsection \ref{def1} to this polygon  
(remember, I never said the polygon from which $X$ is glued should be connected).

The atlas we obtain from gluing back may not be the same atlas we started with, but they share a common maximal atlas. So, with the usual polite fiction of a manifold as a maximal atlas, the new definition is equivalent to the first one.

Although we shall try to ignore it as much as possible, we have to mention the following annoying
\begin{fact}
	Given a homeomorphism $f$ of the surface $X$, and an atlas $(U_i,\phi_i)_i$ on $X$, the charts $\phi_i$ may be pre-composed with $f$, thus giving a new atlas $(U_i,\phi_i \circ f)_i$, which in general is not compatible with the first one.
\end{fact}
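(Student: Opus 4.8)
The plan is to prove the fact by exhibiting an explicit counterexample, since the assertion ``in general not compatible'' only requires that compatibility fail for at least one pair $(X,f)$. First I would make precise what compatibility means here: two atlases on $X$ are compatible exactly when their union is again an atlas of the prescribed type, i.e. when every \emph{cross} transition map — one taking a chart of the first atlas to a chart of the second — is a translation away from $\Sigma$ (and holomorphic near $\Sigma$). So the whole question reduces to computing these cross transition maps.

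A short computation organizes the bookkeeping. Writing the pulled-back atlas as $(f^{-1}(U_i),\phi_i\circ f)_i$ (the domain of $\phi_i\circ f$ is of course $f^{-1}(U_i)$, a harmless correction to the notation in the statement), the transition maps \emph{within} the new atlas are
\[
(\phi_i\circ f)\circ(\phi_j\circ f)^{-1}=\phi_i\circ\phi_j^{-1},
\]
identical to those of the original atlas; hence the new atlas by itself defines a perfectly good translation structure on $X$. The cross transition maps, however, are
\[
\phi_i\circ(\phi_j\circ f)^{-1}=\phi_i\circ f^{-1}\circ\phi_j^{-1},
\]
which is just $f^{-1}$ read through the charts $\phi_j,\phi_i$. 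Taking $i=j$ on an overlap, compatibility would force $\phi_i\circ f^{-1}\circ\phi_i^{-1}$ to be a translation for every $i$; equivalently, $f$ would have to be locally a translation in every chart, that is, a translation automorphism of $X$.

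It therefore suffices to produce a homeomorphism that is not a translation automorphism. I would take $X$ to be the standard square torus $\R^2/\Z^2$ with its obvious translation atlas, and let $f$ be the diffeomorphism induced by $A=\left(\begin{smallmatrix}1&1\\0&1\end{smallmatrix}\right)\in\mathrm{SL}_2(\Z)$, which preserves the lattice and hence descends to $X$ (fixing the marked vertex). In the standard chart $f^{-1}$ is the linear map $A^{-1}=\left(\begin{smallmatrix}1&-1\\0&1\end{smallmatrix}\right)$, which is not a translation, so the cross transition map $\phi\circ f^{-1}\circ\phi^{-1}$ is not a translation and the two atlases are not compatible.

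I do not expect a genuine obstacle: the content lies entirely in identifying ``compatible'' with ``all cross transition maps are translations,'' and in resisting the tempting but false conclusion that, because the new atlas has the same internal transition maps, it must be compatible with the old one. The conceptual point worth flagging — which is really why the fact is recorded at all — is that $f$ yields the same translation surface up to isomorphism but with a different marking, exactly the ambiguity that later forces one to work with marked structures (Teichm\H{u}ller space) rather than with individual atlases.
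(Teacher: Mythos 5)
Your proof is correct, and it is worth noting at the outset that the paper offers no proof at all: the Fact is recorded as a bare statement (its role there is purely cautionary, to motivate the ``polite fiction'' of equivalence classes of maximal atlases), so you are supplying an argument the paper leaves implicit. Your formalization is the right one: compatibility of the two atlases means every cross transition map $\phi_i\circ(\phi_j\circ f)^{-1}=\phi_i\circ f^{-1}\circ\phi_j^{-1}$ is a translation (away from $\Sigma$), which is precisely the statement that $f$ is a translation automorphism read in charts; and your observation that the internal transition maps of the pulled-back atlas are unchanged, so that $(f^{-1}(U_i),\phi_i\circ f)_i$ is a perfectly good translation atlas that merely fails to be compatible with the original, is exactly the content the Fact is meant to convey. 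The counterexample is also well chosen and consistent with the paper's own material: the shear $T=\left(\begin{smallmatrix}1&1\\0&1\end{smallmatrix}\right)$ lies in $\mathrm{SL}_2(\Z)$, hence (as the paper shows later, in the discussion of Veech groups) it preserves the square torus as a translation surface, so your example cleanly separates ``same translation surface up to isomorphism'' from ``compatible atlas'' --- which is the distinction between unmarked and marked structures that the paper invokes immediately after stating the Fact. Two small points of care, both of which you handle adequately: the domains of the new charts are $f^{-1}(U_i)$ (your correction is the intended reading), and the $i=j$ cross map is only available where $U_i\cap f^{-1}(U_i)\neq\emptyset$, which your choice of $f$ fixing the image of the origin guarantees.
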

This fact is annoying because  it only  makes a difference if you tag each point in $X$ with a label and are interested in tracking each individual label. The usual way around it is through another polite fiction: 
decide two maximal atlases are equivalent if they may be deduced from each other by pre-composition with a homeomorphism (sometimes it is convenient to restrict to homeomorphisms which are isotopic to the identity map). Then define your structure as this most unfathomable object: an equivalence class of maximal atlases. 
\subsection{Definition of a translation surface as a holomorphic differential}
We have seen that a translation surface (as in our first definition) has a complex structure, which is essentially a protractor (a way to measure angles between tangent vectors at any point). But it actually has much more: a graduated ruler (since we can measure distances), and a compass, whose needle points North, wherever you are, except at the singular set. This is because our polygon is a subset of the Euclidean plane, so we can choose any direction we want as the North, and since all identifications are  translations (except at the singular set), this goes down to the quotient space $X$. At a vertex of $P$, the needle of the compass gets a little dizzy, but not in the same way as an actual compass at the magnetic pole of the Earth would: while the latter has infinitely many directions to choose from, our compass only has finitely many. If the angle around the singular point $x_i$ is $2k \pi$, then there are, so to say, $k$ different northes at $x_i$. 

Formally, the package (protractor, ruler, compass)  is called a \textbf{holomorphic\index{holomorphic differential}\index{differential!holomorphic}, or Abelian\index{Abelian differential}\index{differential!Abelian},  differential}, and usually denoted $\omega$. If you have a complex manifold $X$, a holomorphic differential is just, for every $x \in X$, the choice of a complex linear map 
$l(x)$ from the tangent space to $X$ at $x$, to $\C$, with the condition that $l(x)$ depends homomorphically on $x$. Our $X$'s have complex dimension one, and the only complex linear maps from $\C$ to itself are the maps $z \mapsto \lambda z$, with $\lambda \in \C$. When we think of the identity as a holomorphic differential on $\C$, we denote it $dz$, so we will usually think of a holomorphic differential $\omega$ 
on $X$ as $f(z)dz$, with $f$ holomorphic, in  local charts, with the usual compatibility requirement that if two chart domains intersect, and $\omega$ reads $f(z)dz$ in one chart, and $g(z)dz$ in the other, 
and $T(z)$ is the transition map, then $f(T(z))T'(z)=g(z)$ ($f$ and $g$ should be thought of as derivatives, since $f(z)dz$ is a differential, so we just apply the chain rule for derivatives). 

If you would like to see it put another way: if $x \in X$, and $v$ is a tangent vector to $X$ at $x$, and we have two different charts $\phi$ and $\psi$ at $x$, such that $\omega$ reads $f(z)dz$ in the chart $\phi$, and $g(z)dz$ in the chart $\psi$, then $\omega(x).v$ reads $f(\phi(x))\phi'(x).v$ in the chart $\phi$, and $g(\psi(x))\psi'(x).v$ in the chart $\psi$. Since the evaluation of $\omega$ does not depend on the chart, we have $f(\phi(x))\phi'(x)=g(\psi(x))\psi'(x)$. Now let $T$ be the  transition map $\phi \circ \psi^{-1}$, then, setting $z=\psi(x)$, we have $f(T(z))\phi'(\psi^{-1}(z))=g(z)\psi'(\psi^{-1}(z))$, which is $f(T(z))T'(z)=g(z)$.

Not every compact manifold has a non-zero holomorphic differential, for instance, if you try and extend $dz$ to the sphere $\C P^1$, 
you get a double pole at infinity, so the needle of your compass will act tipsy at infinity, like  at the North Pole. 
Translation surfaces are different from $\C P^1$ : the holomorphic differential $dz$ in the plane goes down to a holomorphic differential on the quotient space $X$, 
albeit with zeroes. The order of the zeroes is given by the angle at the singularities, as follows from the chain rule: if $g$ is a chart around a singular point, and $f$ is a regular chart, 
then the transition map is $z \mapsto z^k$, so $k f(z^k)z^{k-1}=g(z)$, in particular $g$ has a zero of order $k-1$ at the singular point. 

Assume $\omega$ reads $g(z)dz$ in some chart $(U, \phi)$ at $x_0 \in X$, with $\phi(x_0)=0$ and $g(0) \neq 0$, so we say that $x_0$ is a regular point of $\omega$. Then we may take a primitive of $\omega$ as a new chart at $x_0$: 
\[
\begin{array}{rcl}
\psi :  V & \longrightarrow & \C \\
x & \longmapsto & \int_{x_0}^{x} \omega 
\end{array}
 \]
 where $V$ is some neighborhood of $x_0$, with $V \subset U$. 
The map $\psi$ is a local biholomorphism because $\omega$ does not vanish at $x_0$. 
Let $G$ be the local primitive of $g$, defined in $V$, such that $G(0)=0$.
Note that $\psi(x) = G( \phi(x))$, so the transition map $\psi \circ \phi^{-1}$ between $\phi$ and $\psi$ is $G$. Thus, by the chain rule, assuming $\omega$ reads $f(z)dz$ in the chart $\psi$, we have 
\[
f\left( \psi \circ \phi^{-1} (z) \right) G'(z) = g(z)
  \]
  so $f$ is constant $=1$, that is, $\omega$ reads $dz$ in the chart $\psi$.
  
  Now assume we have two different charts, at different, regular, points $x$ and $y$, whose domains intersect, and assume that in both charts $\omega$ reads $dz$. Let $\phi$ be the transition map between the two charts. Then,  by the chain rule, we have $\phi'=1$, that is, $\phi$ is a translation. Therefore, from a holomorphic differential, we recover a translation atlas,
  thus proving the equivalence of our three definitions.  
  
\subsection{Definition of a half-translation surface as a quadratic differential} 
  Now assume that instead of a translation surface, we have a half-translation surface. What could play the role of a holomorphic differential in that case? Well, the $z \mapsto -z$ map does not preserve the linear form $dz$, but it does preserve the quadratic form $dz^2$. 
  
  If you have a complex manifold $X$, a \textbf{holomorphic quadratic differential\index{quadratic differential}\index{differential!quadratic}} is
  to a holomorphic differential what a quadratic form is to a linear form, so it  is,   for every $x \in X$, the choice of a complex-valued  quadratic form 
  $l(x)$ from the tangent space to $X$ at $x$, to $\C$, with the condition that $l(x)$ depends holomorphically on $x$. Our $X$'s have complex dimension one, and the only complex-valued quadratic forms from $\C$ to itself are the maps $z \mapsto \lambda z^2$, with $\lambda \in \C$. When we think of the square map as a  quadratic differential on $\C$, we denote it $dz^2$, so we will usually think of a holomorphic quadratic differential $\omega$ 
  on $X$ as $f(z)dz^2$, with $f$ holomorphic, in  local charts, with the usual compatibility requirement that if two chart domains intersect, and $\omega$ reads $f(z)dz^2$ in one chart, and $g(z)dz^2$ in the other, 
  and $\phi(z)$ is the transition map, then $f(\phi(z))(\phi'(z))^2=g(z)$ ($f$ and $g$ should be thought of as squares of derivatives). Actually we should be allowing simple poles for $f$ as well, for instance the half-translation sphere in Figure \ref{toto} has four simple poles,  but we shall not dwell on that.  
  
  Note that given a holomorphic differential $\omega$, written $f(z)dz$ in local coordinates, the formula $f(z)^2dz^2$ yields a quadratic holomorphic differential, so the set of holomorphic differentials (modulo identification of a differential with its polar opposite) identifies  with a subset of the set of quadratic holomorphic differentials. It is a proper subset, however, because not every holomorphic function is a square.

  While not every quadratic differential is a square, every quadratic differential becomes a square in a suitable double cover of $X$,  by the classical Riemann surface construction of the square root function. For instance,  the half-translation sphere in Figure \ref{toto} is covered, twofold, by a 4-square torus, in fact the involution of the covering is the hyperelliptic involution of the torus.  It is often convenient to deal with translation surfaces only, taking double covers when need be.

\subsection{Dynamical system point of view}

So far we have seen a translation surface as a geometric, or complex-analytic, object, but it is interesting to view it as a dynamical system. 

We have seen that the North is well-defined on a translation surface $X$, outside the singularities; so we have a local flow $\phi_t$ on $X$,  usually called the \textbf{vertical flow\index{vertical flow}\index{flow!vertical}}, defined by ``walk North for $t$ miles". The flow is complete (i.e well-defined for any $t \in \R$) if we remove the finite, hence negligible,  union of orbits which end in a singularity (which we call \textbf{singular orbits\index{singular orbit}\index{orbit!singular}}). 

Of course we can multiply our Abelian differential by $e^{i\theta}$ for any $\theta$, thus turning the vertical by $\theta$, so in fact we have a family of flows indexed by $\theta \in \left[ 0,2\pi \right] $.

We  ask the usual dynamical questions: are there periodic orbits? if so, can we enumerate them? are there  invariant measures not supported on periodic orbits? If so, how many of them? 

In the case of the square torus, the questions are readily answered: by taking the first return map on a closed transversal, we see that the dynamical properties of the  flow in a given direction $\theta$ are those of a rotation on the circle. That is, when $\theta/\pi \in \Q$, every orbit is periodic, with the same period. 

When $\theta/\pi \in \R \setminus \Q$, every orbit is equidistributed, that is, the proportion of its time it spends in a given interval is the proportion of the circle this interval occupies. In particular the flow in the  direction $\theta$ is uniquely ergodic, that is, it supports a unique invariant measure, up to a scaling factor. 

In the case of square-tiled surfaces, the answer is equally satisfying.
Recall that any square-tiled surface $X$  is a  ramified cover of the square torus, so the flow in the direction $\theta$ on $X$ projects to the flow in the direction $\theta$ on the square torus. Therefore, if $\theta/\pi \in \Q$, every orbit of the  flow in the direction $\theta$ on $X$ projects to a closed orbit in the square torus, so it must be periodic itself. The period need not be the same for all orbits, though, for instance the vertical flow on  $X=St(3)$ has orbits of period $1$ and $2$ (see Figure \ref{3squared}). 
In fact $X=St(3)$, minus the set of singular orbits, is the reunion of two cylinders, one made with orbits of length $2$, and the other made of orbits of length $1$.

The answer when $\theta/\pi \in \R \setminus \Q$ is a bit trickier, because measures cannot be projected, they may only be lifted, and it is unclear why every invariant measure in $X$ should be the lift of an invariant measure in the torus, but as in the torus case, orbits are equidistributed.

This dichotomy between directions which are \textbf{completely periodic\index{completely periodic}}, meaning that the surface decomposes into cylinders of periodic orbits, and directions which are uniquely ergodic, is called the \textbf{Veech dichotomy\index{Veech dichotomy}}.
In the next sections we are going to see a  powerful criterion for unique ergodicity, and apply it to find a larger class of surfaces which satisfy the Veech dichotomy.

In general it is hard to prove that a given surface (say, the double pentagon) satisfies the Veech dichotomy. On the other hand, it is easy to find a surface which does not satisfy the Veech dichotomy (see Figure \ref{pasVeechDichotomy}).

In \cite{MT} an example is given of a translation surface with a direction $\theta$ such that the flow in the direction $\theta$ is minimal (every orbit is dense) but not uniquely ergodic, that is, it supports several distinct invariant measures.  
\begin{figure}[h!]
	\begin{center}
		\definecolor{zzttqq}{rgb}{0.6,0.2,0}
\begin{tikzpicture}[line cap=round,line join=round,>=triangle 45,x=1cm,y=1cm, scale=1.5]
\clip(-1,-1) rectangle (5,5);
\fill[line width=2pt,color=zzttqq,fill=zzttqq,fill opacity=0.10000000149011612] (2,2) -- (4,2) -- (4,0) -- (2,0) -- cycle;
\draw [line width=1pt] (0,0)-- (2,0);
\draw [line width=1pt] (2,0)-- (4,0);
\draw [line width=1pt] (4,0)-- (4,2);
\draw [line width=1pt] (4,2)-- (2,2);
\draw [line width=1pt] (2,2)-- (2.66666,4);
\draw [line width=1pt] (2.66666,4)-- (0.66666,4);
\draw [line width=1pt] (0.66666,4)-- (0,2);
\draw [line width=1pt] (0,2)-- (0,0);
\draw [line width=1pt,color=zzttqq] (2,2)-- (4,2);
\draw [line width=1pt,color=zzttqq] (4,2)-- (4,0);
\draw [line width=1pt,color=zzttqq] (4,0)-- (2,0);
\draw [line width=1pt,color=zzttqq] (2,0)-- (2,2);
\draw [line width=1pt,dash pattern=on 1pt off 2pt] (0.66666,4)-- (0.66666,0);
\draw [line width=1pt] (0,-0.5)-- (0.67,-0.5);
\draw [line width=1pt] (0,-0.5)-- (0.1,-0.4);
\draw [line width=1pt] (0,-0.5)-- (0.1,-0.6);
\draw [line width=1pt] (0.67,-0.5)-- (0.57,-0.4);
\draw [line width=1pt] (0.67,-0.5)-- (0.57,-0.6);

\draw[color=black] (0.35,-0.3) node {$x$};

\end{tikzpicture}
	\caption{Take the surface $St(3)$ and shear the top edge to the right by $x$, keeping the same gluing pattern.The shaded square projects to the surface as a cylinder of vertical periodic geodesics of length $1$, while the rest projects to a cylinder where every vertical geodesic is dense, if $x$ is irrational.} \label{pasVeechDichotomy}
\end{center}
\end{figure}
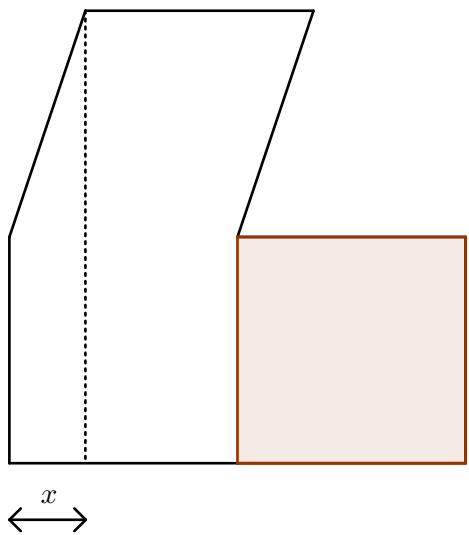
\section{Moduli space}\label{moduli space}

Now that we know what a translation structure on a surface is, we would like to know what it means for two translation structures to be the same, or almost the same. Sameness is easy to define, if hard to visualize,  through atlases: we say two translation structures (as atlases) are equivalent if they share a common (equivalence class of) maximal translation atlas. Observe that a necessary condition for sameness is that the number of singular points, and the angle around each singular point, are the same.

It is a bit harder to see sameness from polygons, because two polygons which look very different may yield the same surface after identifications, see Figure \ref{doublePentagone}.
The correct definition of sameness is that two polygons yield the same translation surface if one may be cut and pasted into the other, provided the pieces are only re-arranged by translation 
(no rotation or flip allowed). 

It is not always easy to tell when two polygons do \textit{not} yield the same surface, either. The two polygons  of Figure \ref{3squared} are really different surfaces, because the one on the right is foliated, in the vertical direction, by closed geodesics of length $3$, while the one on the left is not. 
\subsection{Strata}\label{subsection_strata}
The previous discussion suggests lumping together all translation surfaces with the same number of singularities, and equal angles around each singularities. The set of all such translation surfaces is called a \textbf{stratum\index{stratum}} (term coined by Veech  \cite{Veech90}, to be explained later). It is usually denoted $\mathcal{H}(k_1, \ldots, k_n)$, where $2\pi (k_i +1)$ is the angle around the $i$-th singular point. For instance $\mathcal{H}(2)$ is the stratum of surfaces with only one singular point, of angle $6\pi$, while $\mathcal{H}(1,1)$ is the stratum of surfaces with two singular points, each with angle $4\pi$. 

Note that if two translation surfaces lie in the same stratum, they must have the same genus. To see this, let us assume, to begin with,  that $X$ is obtained by identifying the sides of a connected $2N$-gon, with the vertices identifying into the $n$ singularities of an Abelian differential $\omega$.

Then the Euler characteristic $\chi (X)$  of $X$ is $1-N+n$.

Let us say the angle around each singularity $x_i$ is $2(k_i +1)\pi$, then the sum of the interior angles of the polygon equals the sum of the angles around the singularities, that is, 
\[ 
\sum_{i=1}^{n} 2(k_i +1)\pi = (2N-2)\pi,
 \]
whence 
\[
\sum_{i=1}^{n} k_i = N-1-n = -\chi (X) = 2 \mbox{genus}(X)-2.
\]

Now, assume that $X$ is obtained by identifying the sides of a  $2N$-gon with $p$ connected components, each with $N_i$ vertices, for $i=1,\ldots, p$. Then
\[ 
\chi (X)= p - \frac{1}{2}\sum_{i=1}^{p}N_i + n,
 \] 
and the sum of the interior angles of the polygons is 
\[ 
\sum_{i=1}^{p}(N_i-2) \pi = \sum_{i=1}^{n} 2(k_i +1)\pi
 \]

whence
\[ 
\sum_{i=1}^{n} k_i = \frac{1}{2}\sum_{i=1}^{p}N_i -p -n = -\chi (X).
 \]

For instance, the stratum $\mathcal{H}(0)$ consists of all flat tori. In genus $2$, we have the strata $\mathcal{H}(2)$ or  $\mathcal{H}(1,1)$, and those are the only strata of genus $2$, because the only way the number $2$ can be partitionned is as $1+1$ or $2+0$. The surfaces in Figures \ref{doublePentagone} and \ref{3squared} lie in 
$\mathcal{H}(2)$, while the surface in Figure \ref{4squared} lies in 
$\mathcal{H}(1,1)$.

The regular $4n$-gon, and the double $(2n+1)$-gon, lie in the stratum $\mathcal{H}(2n-2)$. The regular $(4n+2)$-gon lies in the stratum $\mathcal{H}(n-1, n-1)$. On the other hand, square-tiled surfaces are dense in every stratum.

 Now we want to define what it means  for translation surfaces to be close, that is, we are looking for a topology on the set of translation surfaces. 
In fact we can do even better: we may find local coordinates on a given stratum, thus making the stratum (almost) a manifold.

 \subsection{Period coordinates}
Fix a basis for the $\Z$-module $H_1(X, \Sigma, \Z)$. Such a basis may be chosen as (the relative homology classes of)  $\alpha_1, \ldots, \alpha_{2g}, c_1, \ldots, c_{n-1}$, where $g$ is the genus of $X$,  $\alpha_1, \ldots, \alpha_{2g}$ are simple closed curves (based at $x_1$) which generate the absolute homology 
 $H_1(X,  \Z)$, and for each $i=1, \ldots, n-1$, $c_i$ is a simple arc joining $x_1$ to $x_{i+1}$.

 \begin{definition}
 	The \textbf{period coordinates\index{period coordinate}\index{period!coordinate}} of the holomorphic differential $\omega$, with respect to the basis $\alpha_1, \ldots, \alpha_{2g}, c_1, \ldots, c_{n-1}$, are the $2g+n-1$ complex numbers
 	\[ 
 	\int_{\alpha_1} \omega, \ldots, \int_{\alpha_{2g}} \omega, 
 	\int_{c_1} \omega, \ldots, \int_{c_{n-1}} \omega.
 	 \]
 \end{definition}
 The complex numbers $\int_{\alpha_1} \omega, \ldots, \int_{\alpha_{2g}}\omega$ are called the \textbf{absolute periods\index{absolute period}\index{period!absolute}} of $\omega$, because the homology classes 
 $\left[\alpha_1\right] , \ldots, \left[\alpha_{2g}\right]$ live in the absolute homology $H_1 (X, \Z)$, while the complex numbers $\int_{c_{n-}1} \omega, \ldots, \int_{c_{n-1}}\omega$ are called the 
 \textbf{relative periods\index{relative period}\index{period!relative}} of $\omega$, because the homology classes 
 $\left[ c_1 \right] , \ldots, \left[ c_{n-1} \right]$ live in the relative homology 
 $H_1(X, \Sigma, \Z)$. 
 
 Why this is actually a set of local coordinates on the stratum of $X$ is a theorem of \cite{Veech86} (see also \cite{FM}, section 2.3, and also \cite{ZK}, \citen{Veech90}). Let us briefly explain the idea. Assume two holomorphic differential $\omega_1$ and $\omega_2$
lie in the same stratum and have the same period coordinates.  Assume that 
 $\omega_1$ and $\omega_2$ are close enough, in some sense, that they can be considered as smooth (not necessarily holomorphic) differential forms on the same surface $X$ (seen as a differentiable manifold), with the same homology basis $\alpha_1, \ldots, \alpha_{2g}, c_1, \ldots, c_{n-1}$. Each holomorphic differential $\omega_1$ and $\omega_2$ gives you a Euclidean metric on $X$, for which geodesic representatives of $\alpha_1, \ldots, \alpha_{2g}, c_1, \ldots, c_{n-1}$ can be found. Cut $X$ open along these geodesics, in each of the two Euclidean metrics,  you get a pair of $2(2g+2n-1)$-gons whose sides are represented by the period coordinates of the holomorphic differentials. Since $\omega_1$ and $\omega_2$
 have the same period coordinates, the two polygons are isometric, by a translation, so $\omega_1$ and $\omega_2$ give the same translation structure.
 
 \textbf{Example:} assume $(X,\omega)$ is a square-tiled surface. Then the homology basis may be chosen so that the curves $\alpha_1, \ldots, \alpha_{2g}, c_1, \ldots, c_{n-1}$ lie in the sides of the squares.  Thus the period coordinates lie in $\Z\left[ i\right] $.  Conversely, if all period coordinates lie in $\Z\left[ i\right] $, then $X$ may be cut into a plane polygon, all of whose vertices lie in $\Z\left[ i\right] $, so $X$ is actually a square-tiled surface.

\subsection{Dimension of the strata} 
 Thus we know the (complex) dimension of the stratum $\mathcal{H}(k_1, \ldots, k_n)$ is $2g+n-1$. The largest stratum is the one with $2g-2$ singularities of index $1$, its dimension is $4g-3$. The smallest stratum is the one with only one singularity of index $2g-2$, its dimension is $2g$.
 The strata are not disconnected from each other, each stratum but the largest one lies in the closure of one or several of the larger ones: for instance, if we have a sequence $(X, \omega_m)$ of translation surfaces in the stratum 
$\mathcal{H}(k_1, \ldots, k_n)$, and  $\int_{c_1} \omega_m \longrightarrow 0$  when $m \longrightarrow \infty$, then the limit surface lies in 
 $\mathcal{H}(k_1 +k_2,k_3, \ldots, k_n)$, because the singularites $x_1$ and $x_2$ merge in the limit. 
  This is the reason they are called strata, because  they are nested in each other's closure, like matriochka. The union of all strata, with the topology given by the period coordinates, is called the 
  \textbf{moduli space of translation surfaces of genus $g$\index{moduli space of translation surfaces}\index{moduli space!translation surfaces}}, denoted $\mathcal{H}_g$. It contains the largest (or principal) stratum, denoted $\mathcal{H}(1^{2g-2})$, as a dense open subset, so its dimension is $4g-3$.
  
  While the global topology of $\mathcal{H}_g$ is mysterious, we have a simple compactness criterion (\textbf{Mumford's compactness theorem}, \cite{Mumford}) for subsets of $\mathcal{H}_g$: a sequence $X_n$ of elements of $\mathcal{H}_g$, of uniformly bounded area, goes to infinity if and only if there exist closed geodesics in $X_n$ whose lengths go to zero.

 \subsection{Quadratic differentials}
 The theory of translation surfaces becomes much more interesting when you think about it in connection with Teichm\H{u}ller theory.
 
 The set of complex structures on surfaces of  genus  $g$ is a well-studied object, going  back to Riemann (see \cite{Riemann}, reprinted in \cite{Riemann_complet}, pp.88-144, or, for the faint of heart, \cite{Hubbard}). It is worth noting that Riemann actually got there when studying Abelian differentials.
 
 It is called the \textbf{moduli space of complex structures of genus $g$\index{moduli space of complex structures}\index{moduli space!complex structures}}, and denoted $\mathcal{M}_g$. It has a topology, but actually it has much more structure than that,
 in fact it is, except at some special points which correspond to very symetrical surfaces, a complex manifold of dimension $3g-3$, provided $g>1$.
 The genus one case is special and will be discussed in more detail in Section \ref{moduli space of the torus}.  The cotangent space to moduli space at a given $X$ is the vector space of holomorphic quadratic differentials (see \cite{Hubbard}, Theorem 6.6.1). 
 
Recall that we have just computed the dimension of $\mathcal{H}_g$, which we found to be $4g-3$, while  the cotangent bundle of $\mathcal{M}_g$ has dimension $6g-6$, so the squares of Abelian differentials have codimension $2g-3$ in the space of quadratic differentials.
 
 \subsubsection{Dimension of strata of quadratic differentials}
 Following \cite{LanneauHDR}, 
 we denote $\mathcal{Q}_g$ the space of 
 all quadratic differentials of genus $g$, and 
 $\mathcal{Q}(k_1,\ldots, k_n)$ 
 the set of quadratic differentials with $n$ zeroes, of respective orders $k_1,\ldots, k_n$, which are not squares.

 Computations identical to those of Subsection \ref{subsection_strata} show that $\mathcal{Q}_g$ stratifies as the union of $\mathcal{Q}(k_1,\ldots, k_n)$, with $k_1+\ldots+k_n = 4g-4$.

 It is interesting to compute the dimension of the strata of non-square quadratic differentials, because there is one notable difference with the Abelian case. 
  In the case of the Abelian stratum $\mathcal{H}(k_1,\ldots, k_n) \subset \mathcal{H}_g$, we cut our surface into a $2(2g+n-1)$-gon, and basically said that sides being pairwise equal,  you need $2g+n-1$ complex parameters to determine an element of $\mathcal{H}(k_1,\ldots, k_n)$. Let us  try to apply the same argument to quadratic differentials.
  
  Let us take an element of $\mathcal{Q}(k_1,\ldots, k_n)$, and cut  it 
  into a polygon, whose vertices correspond to the singularities of the quadratic differential. Let us assume for simplicity that the polygon is connected. Then the sum  of the interior angles of the polygon equals the sum of the angles around the singularities, which is 
  $\sum_{i=1}^{n}(k_i +2)\pi$. Thus the polygon has 
  $\sum_{i=1}^{n}(k_i+2)  +2= 4g-4 +2n +2$ edges. Since the edges are pairwise equal, this gives us $2g+n-1$ complex parameters. But then, the largest stratum, with $n= 4g-4$ and $k_i=1, i = 1,\ldots, 4g-4$, would have dimension $6g-5$, instead of $6g-6$ as befits the cotangent bundle of $\mathcal{M}_g$.
  
 To understand this, we must take a closer look at how we identify the sides of a polygon. Recall that we made  the convention that we orient the sides of a polygon so that the interior of the polygon lies to the left. With this convention, a necessary and sufficient condition for plane vectors $v_1, \ldots, v_n$ to be the oriented edges of a polygon is that $v_1+ \ldots+ v_n=0$. 
 
 Next, recall that we identify the sides in pairs, in such a way that the resulting surface is orientable, so each edge is glued to another edge in such a way that the arrows on the edges do not match. 
 
 Recall, furthermore,  that we glue each edge to a parallel edge of equal length, so  there exists some permutation $\sigma$  of $\left\lbrace 1,\ldots, n \right\rbrace $ such that $v_{\sigma (i)}= \pm v_i$, for all $i=1,\ldots, n$. Since we must oppose the arrows when gluing, there are two possible cases: either $v_{\sigma (i)}= - v_i$, in which case the two edges are glued by translation, or $v_{\sigma (i)}=  v_i$, in which case the edges are glued with a half-turn. 
 
 In the former case, we may choose $v_i$ freely without being constrained by the relation 
 $v_1+ \ldots+ v_n=0$, while in the latter case, it imposes a condition on $v_i$:  
 \[
 -2v_i = v_1+ \ldots+ \hat{v_i}+\ldots+ \hat{v}_{\sigma (i)}+\ldots+ v_n
 \]
where the hats mean that  $v_{\sigma (i)}$ and $ v_i$ are omitted in the sum. 

So, when some pair of sides is glued with a half-turn, this pair of sides is determined by the others, so we lose one complex parameter. Now, saying that the quadratic differential is not a square is precisely saying that some pair of sides is glued with a half-turn, since we have seen that when every side is glued by translation, we get an Abelian differential. Therefore,  the dimension of the stratum $\mathcal{Q}(k_1,\ldots, k_n)$ is $2g+n-2$.
 
 \subsection{Another look at the dimension of $\mathcal{H}_g$}
 Here is another way to understand the fact that $\dim \mathcal{H}_g=4g-3$.
 A translation structure, or holomorphic differential, consists of the following data: a complex structure on a surface $X$, and a complex-valued differential form, holomorphic for the given complex structure. As we have seen, a complex structure is determined by $3g-3$ complex parameters. 
 
 Besides, the vector space of  holomorphic (for a given complex structure)  differentials has complex dimension $g$. This is  because, given a complex structure $X$, by the Hodge theorem, the complex vector space $H^1(X, \C)$, which has dimension $2g$, is the direct sum of two isomorphic summands, the subspace of holomorphic differentials, and the subspace of anti-holomorphic differentials.
 Thus a translation structure is determined by $3g-3+g=4g-3$ complex parameters, and $ \mathcal{H}_g$ may be viewed as a real vector bundle over 
 $ \mathcal{M}_g$, with fiber $H^1(X,\R)$.

\section{The Teichm\H{u}ller geodesic flow} 
 
\subsection{ Teichm\H{u}ller's theorem}\label{TeichThm}
 Take two complex structures $X_1$ and $X_2$ of genus $g$. Then Teichm\H{u}ller's theorem says there exists 
 a holomorphic quadratic differential $q$ on $X_1$,  and a number $t \in \R$, such that, assuming for simplicity that $q$ is the square of a holomorphic differential $\omega$,  the  1-differential form with real part $e^t \mbox{Re } \omega$ and imaginary part $e^{-t} \mbox{Im } \omega$ is holomorphic with respect to the complex structure  $X_2$. 
 
 Then one may define, at least locally, a distance function by  $d(X_1, X_2):= |t|$, and this distance comes with geodesics (shortest paths): if $s \in \left[ 0,t\right] $, and $X_s$ is the complex structure which makes the complex differential  with real part $e^s \mbox{Re } \omega$ and imaginary part $e^{-s} \mbox{Im } \omega$  holomorphic, then $d(X_1, X_s)= |s|$, so the path $s \mapsto X_s$ is a geodesic. 
 
 Teichm\H{u}ller's distance has an infinitesimal expression, like  a Riemannian metric, and more precisely, it is a Finsler metric  (a Finsler metric is to a Riemannian metric what a norm is to a Euclidean norm). This was discovered by Teichm\H{u}ller,  see \cite{Teichmueller}, p.26, translated in \cite{Teichmueller_traduit}, or just \cite{Hubbard}, Theorem 6.6.5. 
 
 A quadratic differential $\omega$ induces a Riemannian (flat except at the singularities) metric on $X$, in particular it comes with a volume form, so it makes sense to evaluate the total volume of $X$ with respect to $q$, denoted $\mbox{Vol}(X, q)$. The map 
 $(X, q) \mapsto \mbox{Vol}(X, q)$, restricted to the tangent space to $\mathcal{M}_g$ at $X$, is a norm: it is 1-homogeneous, positive except at $\omega=0$, and satisfies the triangle inequality. The first two properties are immediate, the last one may warrant a short proof. 
 
 First, observe that if the quadratic differential $q$ is $f(z)dz^2$ in some chart, then the volume form of $q$, seen as a flat metric, is 
 $\frac{i}{2}\left| f(z)\right| dz \wedge d\bar{z}$, because 
 $dz \wedge d\bar{z}= -2i dx \wedge dy$. This expression is coordinate-invariant, because if $q$ is $g(z)dz^2$ in some other chart, and $T$ is the transition map, then $f(z)=g(T(z))T'(z)^2$, so 
 \[ 
 \left| f(z)\right| dz \wedge d\bar{z} =
 \left| g(T(z))\right|   T'(z)\overline{T'(z)} dz \wedge d\bar{z}=
 \left| g(T(z))\right|   (T'(z) dz)\wedge (\overline{T'(z)} d\bar{z})
  \]
 so the total volume of $q$ may be calculated in local coordinates.
 Now,  assume we have two quadratic differentials $q_1=f_1(z)dz^2$ and $q_2=f_2(z)dz^2$, so $q_1+q_2= (f_1+f_2)dz^2$, the triangle inequality in $\C$ yields $\left| f_1+f_2\right| \leq \left| f_1\right|+\left| f_2\right|$, and, integrating over $X$, we get 
 $\mbox{Vol}(X, q_1+q_2) \leq \mbox{Vol}(X, q_1)+\mbox{Vol}(X, q_2)$.

 
 This norm endows $\mathcal{M}_g$ with a metric: if we have a $C^1$ path $X(t)$ in $\mathcal{M}_g$, its derivative $\dot{X}(t)$ is a quadratic differential, holomorphic at $X(t)$, and we just say that the velocity of $X$ at time $t$ is $\mbox{Vol}(X, \dot{X}(t))$. This is not a Riemannian metric, because the norm is not Euclidean, the specific name is Finsler metric. In any case it induces a geodesic flow on the cotangent bundle of $\mathcal{M}_g$: start at a complex structure $X$, in the direction given by a quadratic differential  $\omega$, and go to $X_t$ such that the  complex quadratic differential  with real part $e^t \mbox{Re } \omega$ and imaginary part $e^{-t} \mbox{Im } \omega$ is holomorphic with respect to the complex structure  $X_t$.

Most interesting to us here is the fact that  $\mathcal{H}_g$, which is a submanifold of the cotangent bundle of $\mathcal{M}_g$, is invariant by the geodesic flow.
In fact, we shall see, in Part \ref{gl2}, that $\mathcal{H}_g$ is invariant by a much more interesting action. 

\subsection{Masur's criterion}
In dynamical systems, we have a saying, 
 which goes ``sow in the parameter space, reap in the phase space". This is particularly relevant to translation surfaces, because the parameter space $\mathcal{H}_g$ comes with so much structure: a geodesic flow, invariant measures, affine coordinates...
Masur's criterion (possibly the single most useful result in the theory) is a striking example. 
\begin{theorem}[Masur's criterion, \cite{Masur92}]
Given an Abelian differential $\omega$,	the orbit of $\omega$ under the Teichm\H{u}ller geodesic flow is recurrent if and only if the vertical flow of $\omega$ is uniquely ergodic.
\end{theorem}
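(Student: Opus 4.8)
The plan is to route everything through Mumford's compactness theorem, the one bridge we need between the dynamics on $\mathcal{H}_g$ and the dynamics on a single surface. By that theorem the orbit $g_t\omega$ (whose area is constant, since $g_t$ preserves the area form) fails to be recurrent exactly when it leaves every compact set, which happens exactly when there are closed geodesics, or saddle connections, on $g_t\omega$ of length tending to $0$. The flow sends the period $\int_\gamma\omega = h + iv$ of a curve $\gamma$, with $h = \mbox{Re}\int_\gamma\omega$ and $v = \mbox{Im}\int_\gamma\omega$, to $e^t h + i e^{-t} v$, so $\gamma$ is short on $g_t\omega$ precisely when $e^t|h|$ and $e^{-t}|v|$ are both small; a divergent forward orbit is therefore detected by curves whose horizontal holonomy shrinks fast while their vertical holonomy stays controlled. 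The whole game is to tie this degeneration to the number of ergodic invariant measures of the vertical flow, so I would first recall the dictionary: transverse invariant measures of the vertical foliation correspond to classes in $H^1(X,\Sigma;\R)$, the canonical one being $[\mbox{Re}\,\omega]$, and unique ergodicity of ``walk North'' says this cone of transverse measures is one-dimensional.

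For the substantive implication, \emph{recurrence} $\Rightarrow$ \emph{unique ergodicity}, I would argue by contraposition. Assume the vertical flow admits two non-proportional ergodic invariant measures $\mu_1 \neq \mu_2$ and manufacture from them a degenerating family of curves. Birkhoff's theorem produces long vertical segments that equidistribute according to $\mu_1$ and to $\mu_2$ respectively; because the two measures record different horizontal ``widths'', closing up such segments yields simple closed curves $\gamma_n$ whose vertical extent $L_n \to \infty$ while their horizontal holonomy $h_n$ shrinks faster than $1/L_n$, that is $|h_n| L_n \to 0$. Choosing $t_n$ with $e^{2t_n} \approx L_n/|h_n|$ balances the horizontal and vertical parts of the period of $\gamma_n$ at the common value $\approx \sqrt{|h_n| L_n} \to 0$, so $\gamma_n$ is short on $g_{t_n}\omega$, and by Mumford's theorem these short curves drive $g_t\omega$ out to infinity in $\mathcal{H}_g$, so the orbit is not recurrent. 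This direction carries the real content: it sharpens the intuition that two competing invariant measures tear the surface apart along the expanding horizontal direction.

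The converse, \emph{unique ergodicity} $\Rightarrow$ \emph{recurrence}, is where I expect the main obstacle, and it is the implication demanding the most care. One begins with a divergent orbit, extracts from Mumford's theorem a sequence $\gamma_n$ of curves going short --- hence nearly vertical, with $h_n \to 0$ --- and must \emph{promote} the resulting geometric limit into an honest transverse invariant measure not proportional to $[\mbox{Re}\,\omega]$. The delicate steps are to take a weak-$*$ limit of the normalized counting measures carried by these curves, to check that the limit is genuinely invariant under the vertical flow, and --- the crux --- to certify that it is a new ergodic component rather than a disguised copy of the standard measure. Controlling these estimates, so that a geometric degeneration really does certify the failure of unique ergodicity, is the heart of the difficulty and the step on which I would spend the bulk of the effort.
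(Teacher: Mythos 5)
Your second paragraph aims at the right target---the substantive content of Masur's criterion is precisely the implication ``not uniquely ergodic $\Rightarrow$ divergent''---but two steps fail as written. First, you conflate the holonomy of a curve with its flat length: ``$\gamma$ is short on $g_t\omega$ precisely when $e^t|h|$ and $e^{-t}|v|$ are both small'' is false in the direction you need, since a small period vector is necessary but not sufficient for a short geodesic representative (a closed curve can have tiny or even zero horizontal holonomy, through cancellation, and still be long and far from vertical). What the construction must control is the \emph{total} horizontal variation $H_n$ of an explicit representative built from long vertical segments and short horizontal closing segments, giving the length bound $e^{-t}L_n + e^{t}H_n$ on $g_t\omega$; the net holonomy $h_n$ does not bound anything. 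Second, and more seriously, there is a quantifier gap: producing one short curve at each balancing time $t_n$ shows only that the orbit is \emph{unbounded}, whereas recurrence merely asks that $g_t\omega$ return to some fixed compact set along \emph{some} sequence of times. To contradict recurrence you must show that the systole of $g_t\omega$ tends to $0$ for \emph{all} large $t$, which is why Masur's argument approximates the two ergodic components by subsurfaces at every scale (equivalently, one argues directly that a convergent subsequence $g_{t_n}\omega \to \omega'$ forces all Birkhoff averages of the vertical flow to agree). Your isolated times $t_n$ prove only the strictly weaker statement ``bounded orbit $\Rightarrow$ uniquely ergodic''.

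Your third paragraph cannot be repaired, because the implication it attempts is false: unique ergodicity does not imply recurrence. Cheung and Masur constructed a divergent Teichm\H{u}ller geodesic whose vertical foliation is uniquely ergodic (\emph{A divergent Teichm\"uller geodesic with uniquely ergodic vertical foliation}, Israel J. Math. 152 (2006)), so no weak-$*$ limit argument can extract a second invariant measure from divergence---in their example there is none to find. The correct content of Masur's criterion \cite{Masur92} is the single implication ``recurrent $\Rightarrow$ uniquely ergodic''; the ``if and only if'' in the statement above is a common but strictly inaccurate shorthand, and you were right to sense that this direction hides the obstruction, though the obstruction is fatal rather than technical. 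Note finally that the paper itself offers no proof to compare against---it cites \cite{Masur92} and refers to \cite{Monteil} for the idea---so your proposal must be measured against Masur's original argument, whose essential mechanism is the one your second paragraph gestures at, but carried out uniformly in $t$.
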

The idea is very clearly explained in \cite{Monteil}.
We'll see several applications of Masur's criterion, here is one: 
\begin{theorem}[\cite{KMS}]\label{thmKMS}
Given an Abelian differential $\omega$, for almost every $\theta$, the vertical flow of $e^{i\theta}\omega$ is uniquely ergodic.
\end{theorem}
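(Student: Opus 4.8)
The plan is to reduce the statement to a recurrence property of the Teichm\"uller geodesic flow and then estimate the Lebesgue measure of the bad set of directions. By Masur's criterion, the vertical flow of $e^{i\theta}\omega$ is uniquely ergodic precisely when the forward orbit $t \mapsto g_t\, e^{i\theta}\omega$ of the Teichm\"uller geodesic flow is recurrent, where $g_t$ acts on period coordinates by $\mathrm{diag}(e^t, e^{-t})$ and passing to $e^{i\theta}\omega$ amounts to rotating the holonomy by $R_\theta$. Thus it suffices to prove that for almost every $\theta$ the trajectory $t \mapsto g_t R_\theta \omega$ returns infinitely often to a fixed compact subset of $\mathcal{H}_g$; equivalently, to show that the set $D$ of directions $\theta$ for which this trajectory eventually leaves every compact set has Lebesgue measure zero.

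The first step is to translate ``leaving every compact set'' into a concrete geometric condition via Mumford's compactness theorem: after normalising the area, $g_t R_\theta \omega \to \infty$ if and only if the length of the shortest saddle connection tends to $0$ along the trajectory. So $\theta \in D$ forces $\mathrm{sys}(g_t R_\theta\omega) \to 0$ as $t \to +\infty$, where $\mathrm{sys}$ denotes that shortest length. The second step is to see how a single saddle connection behaves. If $\gamma$ has holonomy vector $v_\gamma \in \C$ on $\omega$, then on $g_t R_\theta\omega$ its holonomy is $g_t R_\theta v_\gamma$, of squared length $e^{2t}x^2 + e^{-2t}y^2$ with $(x,y) = R_\theta v_\gamma$. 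Minimising over $t$ gives $\sqrt{2|xy|}$, which falls below a threshold $\epsilon$ only when $R_\theta v_\gamma$ points within angle of order $\epsilon^2/|v_\gamma|^2$ of the vertical. Hence $\gamma$ can make the surface $\epsilon$-thin only while $\theta$ ranges over an interval $I_\gamma(\epsilon)$ of length comparable to $\epsilon^2/|v_\gamma|^2$, centred at the direction that makes $v_\gamma$ vertical.

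Combining these observations, every $\theta \in D$ must, for each fixed $\epsilon$, lie in infinitely many of the intervals $I_\gamma(\epsilon)$: a new short saddle connection has to take over each time the previous one grows again. This exhibits $D$ as contained in a $\limsup$ of the intervals $I_\gamma(\epsilon)$, for every $\epsilon$, so the natural tool is a Borel--Cantelli estimate fed by the quadratic growth bound for saddle connections, namely that the number of $\gamma$ with $|v_\gamma| \le R$ is $O(R^2)$. One would then like to sum the lengths $|I_\gamma(\epsilon)| \sim \epsilon^2/|v_\gamma|^2$ and conclude that $D$ is null.

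The main obstacle lies exactly here, and it is genuine rather than cosmetic: with quadratic growth the series $\sum_\gamma |v_\gamma|^{-2}$ diverges logarithmically, so the crude sum $\sum_\gamma |I_\gamma(\epsilon)|$ is infinite and a one-line Borel--Cantelli argument fails. This is the critical exponent that ultimately pins the Hausdorff dimension of the nonuniquely-ergodic directions at $1/2$. The real work is to organise the covering more carefully, counting at each scale only the saddle connections actually responsible for the trajectory staying thin for all large $t$ (grouping them dyadically by length and by the time-window during which they dominate), so that the relevant partial sums converge and $\mathrm{Leb}(D)=0$ follows. An alternative route, delicate at precisely the same point, is to invoke a finite ergodic $\mathrm{SL}_2(\R)$-invariant measure on the stratum: Poincar\'e recurrence then yields recurrence for almost every surface, and the subtle step is transferring this to almost every direction through the \emph{single} fixed surface $\omega$, which is once more governed by the non-divergence estimates above.
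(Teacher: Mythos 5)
Your reduction is correct, and your diagnosis is honest, but as it stands the proposal is not a proof: it stops exactly where the theorem of Kerckhoff--Masur--Smillie begins. You correctly reduce, via Masur's criterion, to showing that the set $D$ of divergent directions is Lebesgue-null; you correctly compute that a saddle connection $\gamma$ can only be made $\epsilon$-short along $g_t R_\theta\omega$ for $\theta$ in an angular window of length $\asymp \epsilon^2/|v_\gamma|^2$ about the direction making $v_\gamma$ vertical; and you correctly observe that the quadratic count of saddle connections makes $\sum_\gamma |v_\gamma|^{-2}$ diverge logarithmically, so the naive Borel--Cantelli bound yields nothing. Announcing that ``the real work is to organise the covering more carefully'' names the missing step without supplying it, and that step is the entire content of \cite{KMS} (its sharpness is reflected in Masur's later result \cite{Masur92} that the non-uniquely-ergodic directions have Hausdorff dimension at most $1/2$, the critical exponent you rightly identify). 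Your alternative route founders at the point you yourself flag: Poincar\'e recurrence against the finite Masur--Veech measure on $\mathcal{H}^1_g$ plus Fubini gives recurrence for almost every \emph{pair} $(\omega,\theta)$, i.e.\ for almost every surface, and transferring this to every fixed $\omega$ with almost every $\theta$ requires precisely the non-divergence estimates you have not proved. So there is a genuine gap: the one quantitative lemma that makes the theorem true is absent in both of your routes.

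For context, the paper does not prove Theorem \ref{thmKMS} either: it cites \cite{KMS} for the full statement and only sketches the weak version --- for almost every $\omega$, almost every $\theta$ --- by applying Poincar\'e's recurrence theorem to the finite Masur--Veech measure on the area-one locus, which is exactly the second route of your closing paragraph. In that sense your write-up goes further than the text: it isolates the correct geometric mechanism, feeds in the correct counting input, and locates the obstruction at the right exponent, including the correct identification of the subtle step in the recurrence route. But a proof must close the estimate it opens; ``one would then like to conclude'' is where yours ends, and where the actual argument of \cite{KMS} begins.
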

Once we have Masur's criterion, it is easy to get a weak version of Theorem \ref{thmKMS}: \textit{for almost every} Abelian differential $\omega$, for almost every $\theta$, the vertical flow of $e^{i\theta}\omega$ is uniquely ergodic. The proof is basically just Poincaré's Recurrence Theorem. Of course, to apply it we need an invariant measure of full support and finite total volume, so we apply it, not on $\mathcal{H}_g$, but on the subset $\mathcal{H}^1_g \subset \mathcal{H}_g $ of Abelian differentials of area $1$. It is a theorem by Masur and Veech (\cite{Masur}, \cite{Veech82}) that the Lebesgue measure induced  on $\mathcal{H}^1_g $ by the period coordinates has finite total volume. 
\part{Orbits of the $GL^+_2(\R)$-action}\label{gl2}
 Every translation surface may be viewed as a polygon in the Euclidean plane with parallel sides of equal length pairwise identified. The group 
$\mathrm{GL}_2^+ (\R )$ acts linearly on polygons, mapping pairs of  parallel sides of equal length to pairs of  parallel sides of equal length, so it acts on translation surfaces. 

Let us take a basis   $\mathcal{B}= (\alpha_1, \ldots, \alpha_{2g}, c_1, \ldots, c_{n-1})$ of $H_1(X, \Sigma, \Z)$. If we cut $X$ into a polygon along the curves $\alpha_1, \ldots, \alpha_{2g}, c_1, \ldots, c_{n-1}$, the period coordinates of $(X,\omega)$ in the basis $\mathcal{B}$ are the sides of the polygon (identifying a complex number and its affix). So, if 
$A \in \mathrm{GL}_2^+ (\R )$, the period coordinates, in the basis $\mathcal{B}$, of the surface $A.(X,\omega)$, are the complex numbers 
	\[ 
A.\int_{\alpha_1} \omega, \ldots, A.\int_{\alpha_{2g}} \omega, 
A.\int_{c_1} \omega, \ldots, A.\int_{c_{n-1}} \omega,
\]
where $A$ acting on a complex numbers means that it acts on its affix. The matrix $A$ induces a homeomorphism (which is actually a diffeomorphism outside the singularities), from the translation surface $(X,\omega)$, to the surface $A.(X,\omega)$.

First, let us observe that strata are invariant under $GL^+_2(\R)$. This is because acting by an element of $GL^+_2(\R)$ on a polygon does not change the order in which sides are identified, which determines the singularities, and the angles around the singularities. 

The $\mathrm{SL}_2 (\R)$-action contains the geodesic flow, in the following sense: 
the geodesic, with respect to the Teichm\H{u}ller metric, which has initial position $X$, and initial velocity $\omega$, is the orbit of $(X, \omega)$
under the diagonal subgroup of $\mathrm{SL}_2 (\R)$,
\[ 
\left\lbrace 
\left( 
\begin{array}{cc}
e^{t} & 0 \\
0 & e^{-t}
\end{array}
\right) 
: t \in \R
\right\rbrace .
\]

It is usually preferred to deal with the $\mathrm{GL}_2^+ (\R)$-action rather  than with the geodesic flow, for the following reason, which is a theorem by Eskin and Mirzakhani \cite{EM}, henceforth referred to as the Magic Wand, after \cite{Zorich_gazette}. While the invariant sets for the geodesic flow may be wild (for instance, Teichm\H{u}ller disks contain geodesic laminations, which are locally the product of a Cantor set with an interval), the closed invariant sets of the $\mathrm{GL}_2^+ (\R)$-action are always nice submanifolds (again, ignoring singularities which occur at very symmetrical surfaces), locally defined by affine  equations in the period coordinates. Any attempt at informally describing the proof, assuming the author could do it, would be as long as this paper. A recurring theme is the analogy with Ratner's theorems. 

Now, by a result by Masur and (independantly) Veech (\cite{Masur, Veech82}), the geodesic flow is ergodic, with respect to a measure of full support in $\mathcal{H}_g$, so almost every orbit is dense; consequently, almost every $\mathrm{GL}_2^+ (\R)$-orbit is dense. In fact,   any stratum supports a full-support, ergodic measure. This means finding interesting (meaning: other than strata closures and $\mathcal{H}_g$ itself), closed invariant subsets won't be easy.
\section{Veech groups}

It may happen that for some polygon $P$ and some element $A$ of $\mathrm{GL}_2^+ (\R )$, both $P$ and $A.P$, after identifications,  are just the same translation surface $X$. This happens when $A.P$ can be cut into pieces, and the pieces re-arranged, by translations, into $A$.

  The \textbf{Veech group\index{Veech group}\index{group!Veech}} of $(X, \omega)$ is the subgroup of $\mathrm{GL}_2^+ (\R )$  which preserves $X$. Since the Veech group must preserve volumes, it is a subgroup of $\mathrm{SL}_2 (\R )$, and it turns out to be a Fuchsian group (see \cite{HS}, Lemma 2). We sometimes denote it $SL(X, \omega)$ after McMullen.
  
Take a matrix $A$ in $SL(X, \omega)$, then it defines a homeomorphism from $X$ to itself, which we again denote $A$ for simplicity. Let $A_*$ be the linear automorphism of $H_1 (X, \Sigma, \Z)$ induced by $A$. Then the period coordinates of $(X, \omega)$, in a basis $\mathcal{B}$ of $H_1 (X, \Sigma, \Z)$, are exactly the period coordinates of $A.(X, \omega)$ in the basis $(A_*)^{-1}(\mathcal{B})$. Thus, modulo a change of basis, the set of periods is invariant under  the Veech group.   
  
   Note that
  $SL(X, \omega)$ is never cocompact, by the following argument: a translation surface always has a closed geodesic, for topological reasons; up to a rotation, we may assume this geodesic is horizontal. Let $l$ be its length. Applying the Teichm\H{u}ller geodesic flow for a time $t$, the length of the closed geodesic becomes $e^{-t}l$, so the Teichm\H{u}ller geodesic goes to infinity in the moduli space, therefore $\mathrm{SL}_2 (\R )/SL(X, \omega)$ cannot be compact.
  
  The next best thing to being co-compact is to have finite co-volume, Veech groups of finite co-volume are going to play an important part. Note that for a Fuchsian group $\Gamma$ to have finite co-volume, its ends at infinity must be finitely many cusps (see Section 4.2 of \cite{S.Katok}).

   Now let us see some examples. 
 \subsection{The Veech group of the square torus is $\mathrm{SL}_2 (\Z )$}

  Figure \ref{T,S} shows that the matrices 
  \[ 
 T= \begin{pmatrix}
  1 & 1  \\ 
  0 & 1  
  \end{pmatrix}
  \mbox{ and }
  S=\begin{pmatrix}
  0 & -1  \\ 
  1 & 0  
  \end{pmatrix},
   \]
   which generate $\mathrm{SL}_2 (\Z )$, lie in the Veech group of the square torus. Conversely, every element of the Veech group preserve the set of periods, so, in the case of the torus, it must preserve $\Z\left[ i\right] $, hence
    it must lie in $\mathrm{SL}_2(\Z)$.
  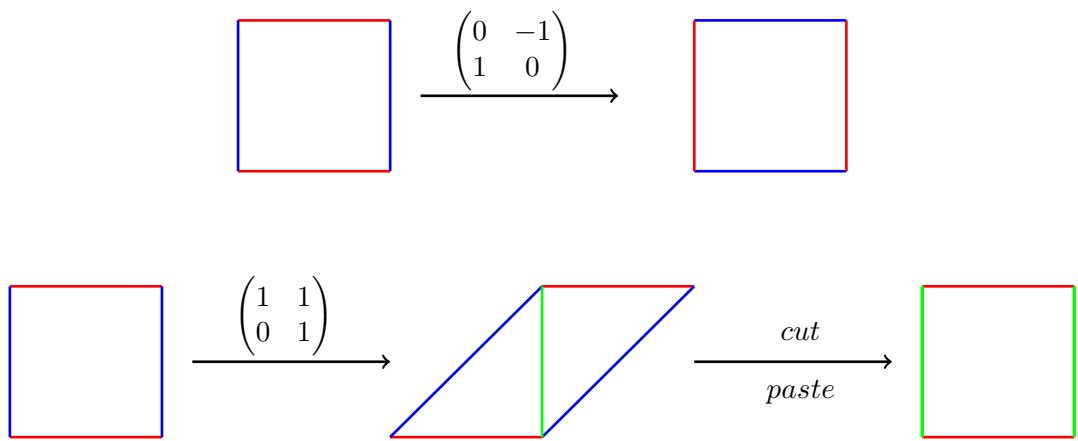
\begin{figure}[h!]
  	\begin{center}
  		\begin{tikzpicture}[scale=2]
  		\draw [red, line width=1pt] (0,0)--(1,0);
  		\draw [red, line width=1pt] (0,1)--(1,1);
  		\draw [blue, line width=1pt] (0,0)--(0,1);
  		\draw [blue, line width=1pt] (1,0)--(1,1);
  		\draw [->, line width=1pt]  (1.2,0.5)--(2.5,0.5);
  		\draw (1.8,0.8) node {$\begin{pmatrix}
  			0 & -1  \\ 
  			1 & 0  
  			\end{pmatrix}$};
  		\draw [blue, line width=1pt] (0+3,0)--(1+3,0);
  		\draw [blue, line width=1pt] (0+3,1)--(1+3,1);
  		\draw [red, line width=1pt] (0+3,0)--(0+3,1);
  		\draw [red, line width=1pt] (1+3,0)--(1+3,1);

  		\end{tikzpicture}
  	\end{center}
  	\,
  	\begin{center}
  		\begin{tikzpicture}[scale=2]
  		\draw [red, line width=1pt] (0,0)--(1,0);
  		\draw [red, line width=1pt] (0,1)--(1,1);
  		\draw [blue, line width=1pt] (0,0)--(0,1);
  		\draw [blue, line width=1pt] (1,0)--(1,1);
  		\draw [->, line width=1pt]  (1.2,0.5)--(2.5,0.5);
  		\draw (1.8,0.8) node {$\begin{pmatrix}
  			1 & 1  \\ 
  			0 & 1  
  			\end{pmatrix}$};
  		\draw [red, line width=1pt] (2.5,0)--(3.5,0);
  		\draw [red, line width=1pt] (3.5,1)--(4.5,1);
  		\draw [blue, line width=1pt] (2.5,0)--(3.5,1);
  		\draw [blue, line width=1pt] (3.5,0)--(4.5,1);
  		\draw [green, line width=1pt] (3.5,0)--(3.5,1);
  		
  		\draw [->, line width=1pt]  (4.5,0.5)--(5.8,0.5);
  		\draw (5.2,0.7) node {$cut$};
  		\draw (5.2,0.3) node {$paste$};
  		
  		\draw [red, line width=1pt] (6,0)--(7,0);
  		\draw [red, line width=1pt] (6,1)--(7,1);
  		\draw [green, line width=1.3pt] (6,0)--(6,1);
  		\draw [green, line width=1.3pt] (7,0)--(7,1);
  		
  		\end{tikzpicture}
  		\caption{Action of $T$ and $S$ on the torus $\T^2$} \label{T,S}
  	\end{center}
  \end{figure}
  \subsection{Veech groups of the regular polygons}
  It is obvious that the rotation by $\pi/n$ is in the Veech group of the regular $2n$-gon. If you view the double $(2n+1)$-gon as a star, as in Figure \ref{doublePentagone}, you see that the rotation by $2\pi/(2n+1)$ is in the Veech group of the double $(2n+1)$-gon. 
  
  Figure \ref{octagonL} explains how to find a parabolic element in the Veech group of the regular $n$-gon: first, the regular $n$-gon may be cut and pasted into a slanted stair-shape. Then the matrix
    \[
  \left( 
  \begin{array}{cc}
  1 & -2\cot \frac{\pi}{n} \\
  0 & 1
  \end{array} 
  \right) \]
  brings the slanted stair-shape to its mirror image with respect to the vertical axis, and the mirror image may be cut and pasted back to a regular $n$-gon.
  
  \begin{figure} [h!]
  	\begin{center}
  		
  		\definecolor{ffqqqq}{rgb}{1,0,0}
  		\begin{tikzpicture}[line cap=round,line join=round,>=triangle 45,x=1cm,y=1cm, scale=0.55]
  		\clip(-4.32,-5) rectangle (22.44,5);
  		\draw [line width=1pt] (3.695518130045147,1.5307337294603591)-- (3.6955181300451465,-1.530733729460359)-- (1.5307337294603591,-3.695518130045147)-- (-1.5307337294603591,-3.695518130045147)-- (-3.695518130045147,-1.5307337294603591)-- (-3.695518130045147,1.5307337294603591)-- (-1.5307337294603591,3.695518130045147)-- (1.530733729460359,3.6955181300451465);
  		\draw [line width=1pt] (1.530733729460359,3.6955181300451465)-- (3.695518130045147,1.5307337294603591);
  		\draw [line width=1pt,color=ffqqqq] (-1.5307337294603591,-3.695518130045147)-- (3.6955181300451465,-1.530733729460359);
  		\draw [line width=1pt,color=ffqqqq] (1.5307337294603591,-3.695518130045147)-- (6.756985588965865,-1.530733729460359);
  		\draw [line width=1pt,color=ffqqqq] (3.6955181300451465,-1.530733729460359)-- (6.756985588965865,-1.530733729460359);
  		\draw [line width=1pt,color=ffqqqq] (1.5307337294603593,-3.695518130045147)-- (8.921769989550652,-3.6955181300451465);
  		\draw [line width=1pt,color=ffqqqq] (6.756985588965866,-1.530733729460359)-- (14.148021849056159,-1.5307337294603587);
  		\draw [line width=1pt,color=ffqqqq] (8.921769989550652,-3.6955181300451465)-- (14.148021849056159,-1.5307337294603587);
  		\draw [line width=1pt,color=ffqqqq] (6.756985588965866,-1.530733729460359)-- (14.14802184905616,1.5307337294603593);
  		\draw [line width=1pt,color=ffqqqq] (14.148021849056157,-1.5307337294603587)-- (21.539058109146453,1.5307337294603596);
  		\draw [line width=1pt,color=ffqqqq] (14.14802184905616,1.5307337294603593)-- (21.539058109146453,1.5307337294603596);
  		\draw [line width=1pt,dash pattern=on 1pt off 1pt] (-3.695518130045147,-1.47)-- (3.6955181300451465,-1.530733729460359);
  		\draw [line width=1pt,dash pattern=on 1pt off 1pt] (3.695518130045147,1.5307337294603591)-- (-3.663125929752753,1.563125929752753);
  		\draw [line width=1pt,dash pattern=on 1pt off 1pt] (-3.695518130045147,-1.47)-- (3.695518130045147,1.5307337294603591);
  		\draw [line width=1pt,dash pattern=on 1pt off 1pt] (-3.663125929752753,1.563125929752753)-- (1.5481259297527528,3.678125929752753);
  		\draw [line width=1pt,dash pattern=on 1pt off 1pt,color=ffqqqq] (14.148021849056162,-1.5307337294603587)-- (14.148021849056157,1.5307337294603587);
  		\draw [line width=1pt,dash pattern=on 1pt off 1pt,color=ffqqqq] (6.756985588965863,-1.5307337294603591)-- (8.921769989550652,-3.6955181300451465);
  		\begin{scriptsize}
  		\draw[color=ffqqqq] (1.38,-3) node {$1$};
  		\draw[color=ffqqqq] (6,-2.685) node {$3$};
  		\draw[color=ffqqqq] (4,-2) node {$2$};
  		\draw[color=ffqqqq] (9,-2.5) node {$4$};
  		\draw[color=ffqqqq] (12,-0.075) node {$5$};
  		\draw[color=black] (-1,-2.7) node {$4$};
  		\draw[color=black] (0.12,2.385) node {$3$};
  		\draw[color=black] (2,-0.045) node {$5$};
  		\draw[color=black] (-2,-0.045) node {$6$};
  		\draw[color=black] (-1.1,3) node {$2$};
  		\draw[color=ffqqqq] (16,0.435) node {$6$};
  		\end{scriptsize}
  		\end{tikzpicture}
  		
  		\caption{The octagon is cut and pasted into a  slanted stair shape} \label{octagonL}
  	\end{center}
  \end{figure}
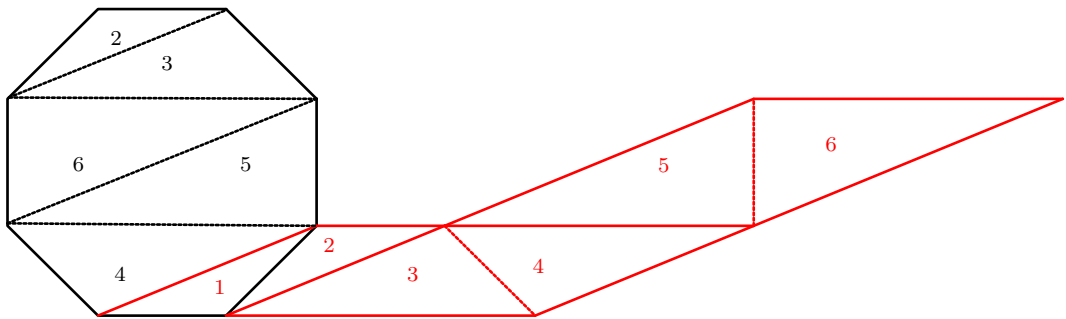

 In fact, Veech proved in \cite{Veech89}
 that the Veech groups of the aforementionned surfaces are generated by the rotation, and by the parabolic element.
 
 \subsection{Veech groups of square-tiled surfaces}
 The Veech group of a square-tiled surface, if we assume that the squares are copies of the unit square in $\R^2$, is a subgroup  of $\mathrm{SL}_2(\Z)$, because any element of the Veech group must preserve the set of periods, hence it must preserve $\Z\left[ i\right] $. In fact it has finite index in  $\mathrm{SL}_2(\Z)$ (see \cite{GJ}).

 Figure \ref{stabilisateur} shows why the parabolic element $T^2$
 lies in the Veech group of the surface $St(3)$.
 Figure \ref{pasVeech} shows why the parabolic element $T$
does not lie in  the Veech group of the surface $St(3)$, in fact it takes $St(3)$ to the surface on the right in Figure \ref{3squared}. The Veech group of $St(3)$ is generated by $T^2$
 and $S$, the rotation of order 4. It has index 3 in $\mathrm{SL}_2(\Z)$, the right cosets are those of $id$, $T$ and its transpose.
 
 In \cite{S1} an algorithm is given to compute the Veech group of any square-tiled surface. It is implemented in the Sagemath package \cite{Delecroix}.
 \begin{figure}[b!]
 	\begin{center} 
 		\begin{tikzpicture}[scale=1.5]		
 		\draw [line width=0.8pt] (0,0)--(2,0)--(2,1)--(1,1)--(1,2)-- (0,2)--(0,1)--(0,0);
 		\draw (0.5,-0.2) node {$a$}; \draw (0.5,2.2) node {$a$};
 		\draw (1.5,-0.2) node {$b$}; \draw (1.5,1.2) node {$b$};
 		\draw (-0.2,0.5) node {$c$}; \draw (2.2,0.5) node {$c$};
 		\draw (-0.2,1.5) node {$d$};\draw (1.2,1.5) node {$d$};
 		\draw (0,0) node {$\bullet$};\draw (1,0) node {$\bullet$};\draw (2,0) node {$\bullet$};\draw (1,1) node {$\bullet$};\draw (1,2) node {$\bullet$};\draw (0,1) node {$\bullet$};\draw (0,2) node {$\bullet$};\draw (2,1) node {$\bullet$};
 		\draw [->, line width=0.6pt]  (1,-0.5)--(1,-2.5);
 		\draw (1.5,-1.5) node {$\begin{pmatrix}
 			1 & 1  \\ 
 			0 & 1  
 			\end{pmatrix}$};
 		
 		\draw [line width=0.8pt] (0,-4)--(2,-4)--(3,-3)--(2,-3)--(3,-2)-- (2,-2)--(1,-3)--(0,-4);
 		\draw (0,-4) node {$\bullet$};\draw (1,-4) node {$\bullet$};
 		\draw (2,-4) node {$\bullet$};
 		\draw (3,-3) node {$\bullet$};\draw (3,-2) node {$\bullet$};\draw (2,-2) node {$\bullet$};\draw (3,-3) node {$\bullet$}; \draw (1,-3) node {$\bullet$};
 		\draw (1.2,-3.5) node {$\alpha$}; \draw (1.8,-2.5) node {$\beta$};
 		
 		\draw [red] (1,-4)--(1,-3);
 		\draw [blue] (2,-3)--(2,-2);
 		\draw (0.5,-0.2-4) node {$a$}; \draw (2.5,2.2-4) node {$a$};
 		\draw (1.5,-0.2-4) node {$b$}; \draw (2.5,1.1-4) node {$b$};
 		\draw (0.3,-3.5) node {$c$}; \draw (2.7,-3.5) node {$c$};
 		\draw (2.7,-2.5) node {$d$}; \draw (1.3,-2.5) node {$d$};

 		\draw  (0.9,-4.4)--(0.9,-5.2);
 		\draw  (0.7,-4.4)--(0.7,-5.2);

 		\draw [line width=0.8pt] (0,-8)--(2,-8)--(2,-7)--(1,-7)--(1,-6)-- (0,-6)--(0,-7)--(0,-8);
 		\draw [red] (2,-8)--(2,-7); \draw [red] (0,-8)--(0,-7);
 		\draw [blue] (0,-7)--(0,-6); \draw [blue] (1,-7)--(1,-6);
 		\draw (0,-8) node {$\bullet$}; \draw (1,-8) node {$\bullet$}; \draw (2,-8) node {$\bullet$}; \draw (0,-7) node {$\bullet$};\draw (1,-7) node {$\bullet$};\draw (0,-6) node {$\bullet$};\draw (1,-6) node {$\bullet$};
 		\draw (0.5,-0.2-8) node {$b$}; \draw (0.5,2.2-8) node {$a$};
 		\draw (1.5,-0.2-8) node {$a$}; \draw (1.5,1.2-8) node {$b$};
 		\draw (-0.2,0.5-8) node {$\alpha$}; \draw (2.2,0.5-8) node {$\alpha$};
 		\draw (-0.2,1.5-8) node {$\beta$};\draw (1.2,1.5-8) node {$\beta$};
 		\end{tikzpicture}	
 		\caption{Action of $T$ on   $St(3)$} \label{pasVeech}
 	\end{center}
 \end{figure}
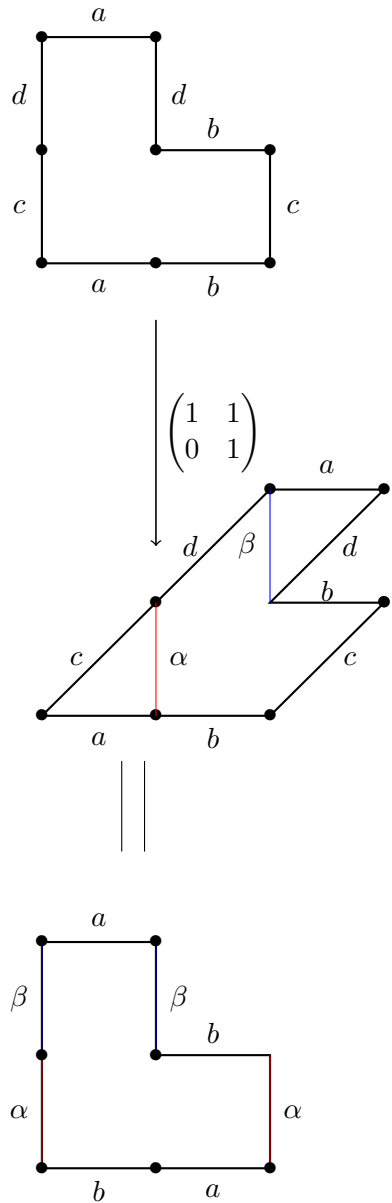
 
 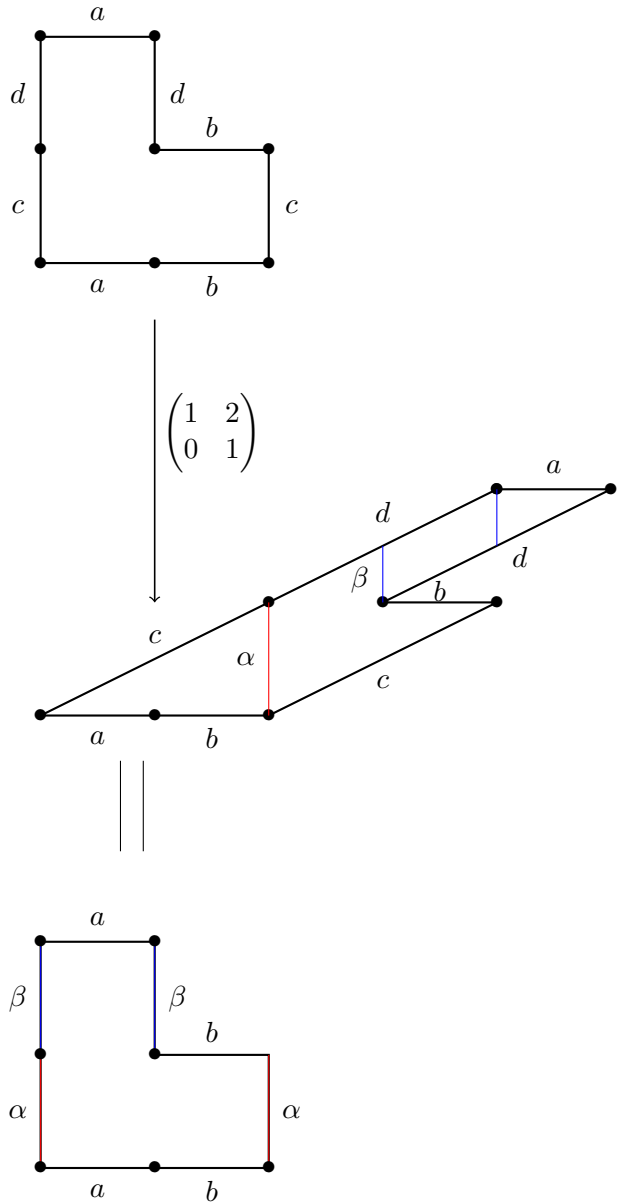
\begin{figure}[b!]
 	\begin{center} 
 		\begin{tikzpicture}[scale=1.5]		
 		\draw [line width=0.8pt] (0,0)--(2,0)--(2,1)--(1,1)--(1,2)-- (0,2)--(0,1)--(0,0);
 		\draw (0.5,-0.2) node {$a$}; \draw (0.5,2.2) node {$a$};
 		\draw (1.5,-0.2) node {$b$}; \draw (1.5,1.2) node {$b$};
 		\draw (-0.2,0.5) node {$c$}; \draw (2.2,0.5) node {$c$};
 		\draw (-0.2,1.5) node {$d$};\draw (1.2,1.5) node {$d$};
 		\draw (0,0) node {$\bullet$};\draw (1,0) node {$\bullet$};\draw (2,0) node {$\bullet$};\draw (1,1) node {$\bullet$};\draw (1,2) node {$\bullet$};\draw (0,1) node {$\bullet$};\draw (0,2) node {$\bullet$};\draw (2,1) node {$\bullet$};
 		\draw [->, line width=0.6pt]  (1,-0.5)--(1,-3);
 		\draw (1.5,-1.5) node {$\begin{pmatrix}
 			1 & 2  \\ 
 			0 & 1  
 			\end{pmatrix}$};
 		
 		\draw [line width=0.8pt] (0,-4)--(2,-4)--(4,-3)--(3,-3)--(5,-2)-- (4,-2)--(2,-3)--(0,-4);
 		\draw (0,-4) node {$\bullet$};\draw (1,-4) node {$\bullet$};\draw (2,-4) node {$\bullet$};\draw (4,-2) node {$\bullet$};\draw (3,-3) node {$\bullet$};\draw (5,-2) node {$\bullet$};\draw (4,-2) node {$\bullet$};\draw (4,-3) node {$\bullet$}; \draw (2,-3) node {$\bullet$};
 		\draw (1.8,-3.5) node {$\alpha$}; \draw (2.8,-2.8) node {$\beta$};
 		
 		\draw [red] (2,-4)--(2,-3);
 		\draw [blue] (3,-3)--(3,-2.5);
 		\draw [blue] (4,-2.5)--(4,-2);
 		\draw (0.5,-0.2-4) node {$a$}; \draw (4.5,2.2-4) node {$a$};
 		\draw (1.5,-0.2-4) node {$b$}; \draw (3.5,1.1-4) node {$b$};
 		\draw (1,-3.3) node {$c$}; \draw (3,-3.7) node {$c$};
 		\draw (3,-2.2) node {$d$}; \draw (4.2,-2.6) node {$d$};

 		\draw  (0.9,-4.4)--(0.9,-5.2);
 		\draw  (0.7,-4.4)--(0.7,-5.2);

 		\draw [line width=0.8pt] (0,-8)--(2,-8)--(2,-7)--(1,-7)--(1,-6)-- (0,-6)--(0,-7)--(0,-8);
 		\draw [red] (2,-8)--(2,-7); \draw [red] (0,-8)--(0,-7);
 		\draw [blue] (0,-7)--(0,-6); \draw [blue] (1,-7)--(1,-6);
 		\draw (0,-8) node {$\bullet$}; \draw (1,-8) node {$\bullet$}; \draw (2,-8) node {$\bullet$}; \draw (0,-7) node {$\bullet$};\draw (1,-7) node {$\bullet$};\draw (0,-6) node {$\bullet$};\draw (1,-6) node {$\bullet$};
 		\draw (0.5,-0.2-8) node {$a$}; \draw (0.5,2.2-8) node {$a$};
 		\draw (1.5,-0.2-8) node {$b$}; \draw (1.5,1.2-8) node {$b$};
 		\draw (-0.2,0.5-8) node {$\alpha$}; \draw (2.2,0.5-8) node {$\alpha$};
 		\draw (-0.2,1.5-8) node {$\beta$};\draw (1.2,1.5-8) node {$\beta$};
 		\end{tikzpicture}	
 		\caption{Action of $T^2$ on  $St(3)$} \label{stabilisateur}
 	\end{center}
 \end{figure}

 \section{Teichm\H{u}ller disks}  
  The orbit of $(X, \omega)$ under $\mathrm{GL}_2^+ (\R )$, or, more properly, its projection to the Teichm\H{u}ller space $\mathcal{T}_g$, is called the \textbf{Teichm\H{u}ller disk\index{Teichm\H{u}ller disk}\index{Teichm\H{u}ller!disk}} 
  of $(X, \omega)$. The concept (under the name ``complex geodesic\index{complex geodesic}", which is still used sometimes) originates in \cite{Teichmueller}, § 121. Here is why it is called a disk. 
  
  Observe that if $\omega$ is a 
 holomorphic differential on a Riemann surface $X$, and  $A \in \mathrm{GL}_2^+ (\R )$
 is a similitude, then $A.\omega$ is still holomorphic with respect to the complex structure of $X$, so if we are looking at the projection to the Teichm\H{u}ller space $\mathcal{T}_g$, the action of $\mathrm{GL}_2^+ (\R )$ factors through 
 the quotient space of $\mathrm{GL}_2^+( \R )$  by similitudes.
  Recall that said quotient space  is the hyperbolic disk $\Hyp ^2$. 
  
  The \textbf{Teichm\H{u}ller curve\index{Teichm\H{u}ller curve}\index{Teichm\H{u}ller!curve}} of $(X, \omega)$ is  the  quotient of the hyperbolic disk $\Hyp ^2$ by  $SL(X, \omega)$, which is a Fuchsian group. The fact that $SL(X, \omega)$ is a Fuchsian group tells us that this quotient is a hyperbolic surface, possibly with some conical singularities at symmetrical surfaces (any symmetry of $X$ is an element of $SL(X, \omega)$ which fixes $X$).

  Since the $\mathrm{GL}_2^+ (\R)$-action contains the Teichm\H{u}ller geodesic flow as a subgroup action, Teichm\H{u}ller curves, which are $\mathrm{GL}_2^+ (\R)$-orbits, are invariant under the geodesic flow. Furthermore, Teichm\H{u}ller curves, as hyperbolic manifolds, are isometrically embedded in $\mathcal{M}_g$ (see \cite{Royden}).
  
  Figure \ref{pasVeech} shows that the two surfaces of Figure \ref{3squared} lie in the same Teichm\H{u}ller curve.
 
\subsection{Example: the Teichm\H{u}ller disk of the torus}\label{moduli space of the torus}
The Teichm\H{u}ller disk of the torus is special because it is also the 
Teichm\H{u}ller \textit{space} of the torus.

A torus, as a complex curve, is $\C /\Lambda$, where $\Lambda = \Z u \oplus \Z v$ is a lattice in $\R^2$, $u$ and $v$ being two non-colinear vectors in $\R^2$. Since $\Z u \oplus \Z v=\Z v \oplus \Z u$, we may assume that $(u,v)$ is a positive basis of $\R^2$. Thus the matrix $M$ whose columns are the coordinates of $u$ and $v$, in that order, lies in $\mathrm{GL}_2^+( \R )$.
The Abelian differential we consider on $\C /\Lambda$ is $dz=dx+idy$.
A matrix $A=\begin{pmatrix}
a & b  \\ 
c & d  
\end{pmatrix} \in \mathrm{GL}_2^+( \R )$
acts on $dz$ by pull-back : 
\[ 
\forall \begin{pmatrix}
x \\
y
\end{pmatrix} \in \R^2, A^* dz \begin{pmatrix}
x \\
y
\end{pmatrix}= 
dz \left( A.\begin{pmatrix}
x \\
y
\end{pmatrix}\right) 
= dz \begin{pmatrix}
ax+by \\
cx+dy
\end{pmatrix}
= ax+by +i(cx+dy)
 \]
 so $A^*dz = a.dx +b.dy +i(c.dx +d.dy)$. The definition of the pull-back means that  the complex 1-form $A^*dz$ on $\C /\Lambda$, which is non-holomorphic unless $A$ happens to be $\C$-linear, becomes the Abelian differential $dz$ on the torus $\C / A.\Lambda$, where $A.\Lambda= \Z A.u \oplus \Z A.v$.
 
 Let $(e_1,e_2 )$ be the canonical basis of $\R^2$. The straight segments 
 $\left\lbrace te_i : 0 \leq t \leq 1 \right\rbrace $, for $i=1,2$, become closed curves in $\C/\Z^2=\C /\Z e_1 \oplus \Z e_2$. We again denote $e_1$ and $e_2$, respectively, the homology classes of those closed curves. For any
 torus $\C /\Lambda$, with $\Lambda = \Z u \oplus \Z v$, 
 $M \in \mathrm{GL}_2^+( \R )$ being the matrix whose columns are the coordinates of $u$ and $v$, in that order, $M$ may be viewed as a diffeomorphism from  $\C/\Z^2$ to $\C/\Lambda$. This allows us  to consider $(e_1, e_2)$ as a basis of $H_1(\C/\Lambda, \R)$: $e_1$ (resp. $e_2$) is the homology class of the closed curve $\left\lbrace tu, \mbox{ resp. }tv  : 0 \leq t \leq 1 \right\rbrace $ in $\C/\Lambda$.  
 
We compute  period coordinates in the basis $(e_1, e_2)$, and since $\omega=dz$,  the period coordinates of $\C /\Lambda$ are $z(u)$ and $z(v)$,  where $z(u)$ is the complex number whose affix is $u$. So, considering periods as vectors in $\R^2$, $\mathrm{GL}_2^+( \R )$ acts on periods linearly on the left.

If $\phi : \R^2 \longrightarrow \R^2$ is a similarity (a non-zero $\C$-linear map), then $\C /\Lambda$ and $\C /\phi (\Lambda )$ are bi-holomorphic. 
So, if we are interested in the projection of the Teichm\H{u}ller disk to the moduli space of complex structures, we might as well consider the quotient of $\mathrm{GL}_2^+( \R )$ by the subgroup $H$ of similarities, acting on the left. Beware that $\mathrm{GL}_2^+( \R )$ must then act on the quotient from the right.

Say two matrices $M_1, M_2 \in GL_2^+( \R )$ are equivalent under $H$ if there exists
$P=\begin{pmatrix}
a & c  \\ 
-c & a  
\end{pmatrix} \in H$, such that $PM_1=M_2$. We  use the following representative of an equivalence class: 
\[
\mbox{if } 
u=\begin{pmatrix}
a  \\ 
c 
\end{pmatrix} , 
v=\begin{pmatrix}
b  \\ 
d 
\end{pmatrix}, 
M=\begin{pmatrix}
a & b  \\ 
c & d  
\end{pmatrix} \in GL_2^+( \R ), \mbox{ then } 
\]
\[ 
\frac{1}{a^2+c^2}\begin{pmatrix}
a & c  \\ 
-c & a  
\end{pmatrix} 
\begin{pmatrix}
a & b  \\ 
c & d  
\end{pmatrix}=
\frac{1}{a^2+c^2}\begin{pmatrix}
1 & ab+cd  \\ 
0 & ad-bc  
\end{pmatrix} .
\]
Observe that the matrix 
$\frac{1}{a^2+c^2}\begin{pmatrix}
a & c  \\ 
-c & a  
\end{pmatrix}$,
viewed as a map from $\C$ to $\C$, is $z \mapsto \frac{1}{z(u)}z$. 
Geometrically, this means that the complex number which represents (the biholomorphsm class of) $\C/\Lambda$, where $\Lambda= \Z u \oplus \Z v$,  is just
$\left[ z(u) : z(v) \right] \in \C P^1$, so the canonical representative is 
$\left[ 1 : \frac{z(v)}{z(u)}\right] \in \C P^1$.

Since $(u,v)$ is a positive basis of $\R^2$, $ \frac{z(v)}{z(u)}$ lies in the upper half-plane $\Hyp^2$, so we have a bijection 
\[ \Psi : H\backslash GL_2^+( \R ) \longrightarrow \Hyp^2.
\]


Take an element of $H\backslash GL_2^+( \R )$, represented
by  a matrix $\begin{pmatrix}
1 & x  \\ 
0 & y  
\end{pmatrix}$, 
and an element $\begin{pmatrix}
a & b  \\ 
c & d  
\end{pmatrix}$
of $GL_2^+( \R )$. Then, recalling that $\mathrm{GL}_2^+( \R )$  acts on the quotient from the right, 
\[ 
M:= \begin{pmatrix}
1 & x  \\ 
0 & y  
\end{pmatrix}
\begin{pmatrix}
a & b  \\ 
c & d  
\end{pmatrix}=
\begin{pmatrix}
a+cx & b+dx  \\ 
cy & dy  
\end{pmatrix}.
\]
Set $z=x+iy$, so the affixes 
of the columns of the matrix $M$ are $a+cz$ and $b+dz$, then the map $\Psi$ takes the equivalence class of the matrix $M$ to the point $\frac{dz+b}{cz+a}$ in $\Hyp^2$. This means that the map $\Psi$ is equivariant, with respect to the  right action of $GL_2^+( \R )$ on  right cosets, and the right action of $GL_2^+( \R )$ on $\Hyp^2$ defined by 
\begin{align}
\Hyp^2 \times GL_2^+( \R ) & \longrightarrow \Hyp^2\\
\left(z, \begin{pmatrix}
a & b  \\ 
c & d  
\end{pmatrix}\right) 
& \longmapsto \frac{dz+b}{cz+a}.
\end{align}

\subsubsection{Hyperbolic metric vs. Teichm\H{u}ller metric}
So far we have identified the  Teichm\H{u}ller disk of the torus with the hyperbolic plane. We already know that the Veech group of the square torus is $\mathrm{SL}_2(\Z)$, so the
Teichm\H{u}ller curve of the torus is the \textit{modular curve} $\Hyp^2/ \mathrm{SL}_2(\Z)$ (more on which can be found in \cite{Arnoux}).

The hyperbolic plane, as a set, is not particularly interesting; what is interesting about it is the hyperbolic metric. We shall now see that Teichm\H{u}ller's metric, on the Teichm\H{u}ller space of the torus viewed as the hyperbolic plane, is precisely the hyperbolic metric, up to a multiplicative factor, which we shall ignore. 

We shall use the fact that the hyperbolic metric, up to a multiplicative factor, is the only Finsler metric on $\Hyp^2$ invariant under the action of $\mathrm{SL}_2(\R)$. This is because $\mathrm{SL}_2(\R)$ acts  transitively on the tangent bundle of $\Hyp^2$, so once we know the length of one tangent vector at some point, we know the length of every tangent vector, at every point.

By the equivariance of the map $\Psi$, and since we are not interested in the multiplicative factor, all we have to check is the invariance of the Teichm\H{u}ller metric under the action of $\mathrm{SL}_2(\R)$.

Now recall the Teichm\H{u}ller norm, on the fiber, over $X$, of the holomorphic quadratic differential bundle, is just the total area. 
Since $\mathrm{SL}_2(\R)$ preserves area, it preserves the 
Teichm\H{u}ller metric. Thus we have proved that the  Teichm\H{u}ller metric
is a multiple of the hyperbolic metric on $\Hyp^2$. This was originally observed in § 5 of \cite{Teichmueller}.

\subsubsection{Hyperbolic geodesics}
Let us investigate a little bit the relationship between hyperbolic geodesics and flat tori. The subgroup 
\[ 
\mathcal{G} := \left\lbrace g_t = \begin{pmatrix}
e^t & 0 \\
0 & e^{-t}
\end{pmatrix} : t \in \R \right\rbrace  
 \]
of $\mathrm{SL}_2(\R)$ is mapped by $\Psi$ to the geodesic 
\begin{align*}
\R & \longrightarrow \Hyp^2 \\
t & \longmapsto e^{-2t}i.
\end{align*}
For any $A=\begin{pmatrix}
	a & b  \\ 
	c & d  
\end{pmatrix} \in \mathrm{SL}_2(\R)$, 
\[ 
\mathcal{G}A=\left\lbrace  \begin{pmatrix}
e^t a & e^t b \\
e^{-t}c & e^{-t}d
\end{pmatrix} : t \in \R \right\rbrace  
 \]
is mapped by $\Psi$ to the geodesic 
\begin{align*}
\R & \longrightarrow \Hyp^2 \\
t & \longmapsto 
\frac{e^{2t}ab +e^{-2t}cd +i(ad-bc)}{e^{2t}a^2 +e^{-2t}c^2}
\end{align*} 
whose endpoints at infinity are $\frac{b}{a}$ and $\frac{d}{c}$. 

Here is a geometric interpretation of the endpoints. Consider the complex differential form 
$\eta := A^* dz= a.dx+b.dy +i(c.dx+d.dy)$.
 Then, for any $t \in \R$, 
\[ (g_t A)^* dz = e^{t}\Re \eta +i e^{-t} \Im \eta,
\]
 that is, $e^{t}\Re \eta +i e^{-t} \Im \eta$ is holomorphic on the torus 
$\C /\Lambda$, where $\Lambda= g_t.A.\Z^2$. So $\eta^2$ is the quadratic differential given by Teichm\H{u}ller's theorem (see \ref{TeichThm}), associated with the geodesic $\Psi g_t. $, whose real part is contracted along the geodesic, while its imaginary part is expanded. The endpoints at infinity of the geodesic are the (reciprocals of the opposites of the) slopes of the respective kernels of  the real and imaginary parts of $\eta$.

\subsubsection{Horocycles}
The subgroup 
\[ 
\mathcal{N} := \left\lbrace n_t = \begin{pmatrix}
1 & t \\
0 & 1
\end{pmatrix} : t \in \R \right\rbrace  
\]
of $\mathrm{SL}_2(\R)$ is mapped by $\Psi$ to the horocycle
\begin{align*}
\R & \longrightarrow \Hyp^2 \\
t & \longmapsto i+t.
\end{align*}
For any $ A=\begin{pmatrix}
	a & b  \\ 
	c & d  
\end{pmatrix} \in \mathrm{SL}_2(\R)$,
\[   
\mathcal{N}A=\left\lbrace  \begin{pmatrix}
 a+tc &  b+td \\
c & d
\end{pmatrix} : t \in \R \right\rbrace  
\]
is mapped by $\Psi$ to the horocycle 
\begin{align*}
\R & \longrightarrow \Hyp^2 \\
t & \longmapsto 
\frac{(a+tc)(b+td)+cd +i(ad-bc)}{(a+tc)^2+c^2}
\end{align*} 
whose endpoint at infinity is $\frac{d}{c}$, and whose apogee is $\frac{d}{c}+i\frac{ad-bc}{c^2}$. 

The pull-back, by $n_t A$, of the real 1-form $dy$, is $c.dx+d.dy$, because
\[ 
(n_t A)^*dy \begin{pmatrix}
x \\
y
\end{pmatrix}
= dy \left( n_tA\begin{pmatrix}
x \\
y
\end{pmatrix}\right) = dy\begin{pmatrix}
(a+tc)x+(b+td)y \\
cx+dy
\end{pmatrix}=cx+dy,
 \]
in particular $(n_t A)^*dy$ is constant along the horocycle 
$\Psi \left(\mathcal{N}A \right) $. The point at infinity of the horocycle is (minus the reciprocal of) the slope of the kernel of the constant 1-form. 

The reason for the annoying recurrence of "minus the reciprocal of..." is that the matrix 
$\begin{pmatrix}
d & b \\
c & a 
\end{pmatrix}$ 
is conjugate, by 
$\begin{pmatrix}
1 & 0 \\
0 & -1 
\end{pmatrix}$,
which acts on the hyperbolic plane by $z \mapsto -z$, to the inverse matrix
$A^{-1}=\begin{pmatrix}
d & -b \\
-c & a 
\end{pmatrix}$. 

If $\frac{d}{c} \in \Q$, then all the closed geodesics whose velocity vectors are tangent to the kernel of $c.dx+d.dy$ are closed, and of  length $c^2+d^2$. So  the torus  $n_t A  $, for $t \in \R$,  may be seen as a   cylinder of girth $c^2+d^2$, with the boundaries identified. When $t$ varies in $\R$, the identification varies but the closed geodesics remain fixed.

\subsubsection{Relationship between the asymptotic behaviour of hyperbolic geodesics, and the dynamic behaviour of  Euclidean geodesics} 
The set of slopes of closed geodesics in $\C/\Z^2$ is $\Q$. The Veech group $SL_2(\Z)$ acts transitively on $\Q$. Given a  hyperbolic geodesic $t \mapsto \gamma(t)$ in the modular surface $\Hyp^2 /SL_2(\Z)$, the following four points are equivalent:
\begin{itemize}
	\item $\gamma(t)$ goes to infinity in $\Hyp^2 /SL_2(\Z)$, when $t \rightarrow \infty$
	\item the  lifts of $\gamma$ to $\Hyp^2$ have a rational endpoint when $t \rightarrow \infty$
	\item all orbits of the vertical flow of the Abelian differential associated with $\gamma$ are closed
	\item $\gamma$ is invariant under the parabolic subgroup of $SL_2(\Z)$ generated by 
	$\begin{pmatrix}
	1 & 0 \\
	1 & 1
	\end{pmatrix}$.
\end{itemize}

\subsection{Examples of higher genus Teichm\H{u}ller disks}
\subsubsection{Three-squared surfaces}
The Veech group of the surface $St(3)$ is an index 3 subgroup  of $SL_2(\Z)$. It has a fundamental domain which is an ideal triangle in $\Hyp^2$, with ideal vertices $-1, 1, \infty$. The parabolic transformation $T^2$ identifies the two vertical boundaries, and the rotation $S$ identifies the two halves of the semi-circular boundary. The point $i$, which corresponds to the identity matrix, that is, to the surface $St(3)$ itself, is a singular point: a conical point with angle $\pi$. This is because it is invariant by $S$, which acts as the involution $z \mapsto -\frac{1}{z}$ on the hyperbolic plane. The other two three-squared translation surfaces, which correspond to the matrices $T$ and $ST$, are both (since the induce the same complex structure) mapped by $\Psi$, as defined in Subsection \ref{moduli space of the torus}, to the point $1+i$ (which is identified with $1-i$ by $T^2$).

The Teichm\H{u}ller curve of $St(3)$ has two cusps, one at $\infty$, and the other at $\pm 1$ (which become the same point in the Teichm\H{u}ller curve). Since $St(3)$ is square-tiled, the set of slopes of closed geodesics is $\Q$. The difference with the torus is that the Veech group does not act transitively on $\Q$, it has two orbits, one for each cusp. The orbit of $\infty$ consists of all fractions $\frac{p}{q}$, with $p \neq q \mod 2$. The orbit of $\pm 1$ consists of all fractions $\frac{p}{q}$, with $p = q=1 \mod 2$. 

The first cusp is called a two-cylinder cusp, while the second cusp is called a one-cylinder cusp, for the following reason. If a geodesic, in the Teichm\H{u}ller curve, escapes to infinity in the two-cylinder cusp, then any lift of this geodesic to $\Hyp^2$ corresponds, by Teichm\H{u}ller's theorem, to a  quadratic differential, the trajectories of whose real part are closed, and decompose $St(3)$ into two cylinders of closed geodesics. For instance, the vertical direction admits two cylinders, one obtained by identifying the green boundaries  in Figure \ref{St3, stage 1} is made of vertical closed geodesics of length 2, and the other,  obtained by identifying the blue boundaries  in Figure \ref{St3, stage 1} is made of vertical closed geodesics of length 1. If a geodesic, in the Teichm\H{u}ller curve, escapes to infinity in the one-cylinder cusp, then any lift of this geodesic to $\Hyp^2$ corresponds, by Teichm\H{u}ller's theorem, to a  quadratic differential, the trajectories of whose real part are closed,  pairwise homotopic, and of equal length. For instance, all geodesics in the 45° direction, except those that hit the singular point,  are closed and of length $3\sqrt{2}$.

See  \cite{CKM} for more on this Teichm\H{u}ller curve. Note that the three-squared Teichm\H{u}ller curve has genus zero, topologically it is a sphere minus two points, one for each cusp. The genus, and the number  of cusps, of Teichm\H{u}ller curves of square-tiled surfaces may be arbitrarily large. See \cite{HL} for the stratum $\mathcal{H}_2$; given a square-tiled surface in any  stratum, the genus and number of cusps of its Teichm\H{u}ller curve  may be computed with \cite{Delecroix}. 

\subsubsection{Regular polygons} 
We have seen that the Veech group of the regular polygons is generated by a parabolic element, and a rotation. The Veech group of the double $2n+1$-gon has a fundamental domain which looks like Figure \ref{disque_DP_octogone}, left. It has only one cusp, at infinity in Figure \ref{disque_DP_octogone}. This is reflected in the fact that the Veech group acts transitively on the set of directions of closed geodesics. It is an $n$-cylinder cusp, meaning that in any direction of closed geodesic, the surface decomposes into $n$ cylinders. 

The  point $(0,1)$ in Figure \ref{disque_DP_octogone}, left, is the image by $\Psi$ of the identity matrix, that is, the  double $2n+1$-gon. In the Teichm\H{u}ller curve it is singular, more precisely it is a conical point with angle $\frac{2\pi}{2n+1}$. This reflect the fact that the double $2n+1$-gon is invariant by a rotation of order $2n+1$.

The corner of coordinates $(\pm \cot \frac{\pi}{2n+1}, y)$,  is the image by $\Psi$, when $n=2$,  of the golden L (see \cite{DL}), and when $n >2$,  of stair-shaped polygons (see \cite{LM}). Those polygons are  invariant by $S$, which acts as the involution $z \mapsto -\frac{1}{z}$ on the hyperbolic plane, this is why the corner becomes a conical point with angle $\pi$ in the Teichm\H{u}ller curve.

The main difference between the Teichm\H{u}ller curve of the double $2n+1$-gon, and Teichm\H{u}ller curve of the regular $4n$-gon, is that the latter has two cusps, at $\infty$ and $(\pm \cot \frac{\pi}{4n}, 0)$ on Figure \ref{disque_DP_octogone}, right. See \cite{SU} for more on the Teichm\H{u}ller curve of the regular octagon, where the fundamental domain is drawn in the hyperbolic disk rather than in the hyperbolic half-plane. 

The Teichm\H{u}ller curve of the octagon, however, is different from the three-square Teichm\H{u}ller curve, in that both cusps are two-cylinders, meaning that if a geodesic, in the Teichm\H{u}ller curve, escapes to infinity in any cusp, then any lift of this geodesic to $\Hyp^2$ corresponds, by Teichm\H{u}ller's theorem, to a  quadratic differential, the trajectories of whose real part are closed, and decompose the octagon into two cylinders of closed geodesics. 

Then, you may ask, how do we distinguish between the cusps ? First, look at the horizontal direction in the octagon (Figure \ref{octagonL}). We leave it to the reader to check that the cylinder made up with the triangles $5$ and $6$ has height $1$ and length $1+\sqrt{2}$, while the cylinder made up with the triangles $1,2,3$ and $4$ has height $\frac{1}{\sqrt{2}}$ and length $2+\sqrt{2}$.

The \textbf{module}\index{module} of a cylinder is the ratio of its height to its length. So we see that for the horizontal direction, the ratio of the modules of the cylinders is $2$. What is important about this ratio of modules, is that it is invariant by $\mathrm{GL}_2^+ (\R)$, simply because linear maps, while they may not preserve length, preserve ratios of length of collinear segments. Now we leave it to the reader to check that in the direction of the short red diagonal, one cylinder is made up of the triangles $1$ and $2$, while the other is made of triangles $3,4,5,6$, and the ratio of their modules is not $2$. This means that no element of the Veech group takes 
the horizontal direction to the direction of the short red diagonal, so the Veech group, acting on the set of directions of closed geodesics, has at least two orbits, therefore there are at least two cusps.

 \begin{figure}[b!]
	\begin{minipage}[b]{0.5\linewidth}
		\definecolor{uuuuuu}{rgb}{0.26666666666666666,0.26666666666666666,0.26666666666666666}
		\definecolor{ududff}{rgb}{0.30196078431372547,0.30196078431372547,1}
	\begin{tikzpicture}[line cap=round,line join=round,>=triangle 45,x=1cm,y=1cm, scale=1.1]
	\clip(-2.6,-1.19) rectangle (3.5,6);
	\draw [shift={(1.3763819204711736,0)},line width=1pt]  plot[domain=1.5707963267948966:2.5132741228718345,variable=\t]({1*1.7013016167040798*cos(\t r)+0*1.7013016167040798*sin(\t r)},{0*1.7013016167040798*cos(\t r)+1*1.7013016167040798*sin(\t r)});
	\draw [line width=1pt] (1.3763819204711736,1.7013016167040798) -- (1.3763819204711736,6.63);
	\draw [shift={(-1.3763819204711736,0)},line width=1pt]  plot[domain=0.6283185307179586:1.5707963267948966,variable=\t]({1*1.70130161670408*cos(\t r)+0*1.70130161670408*sin(\t r)},{0*1.70130161670408*cos(\t r)+1*1.70130161670408*sin(\t r)});
	\draw [line width=1pt] (-1.3763819204711736,1.7013016167040798) -- (-1.3763819204711736,6.63);
	\draw [fill=ududff] (0,1) circle (2.5pt);
	\draw[color=black] (0.,.42) node {$(0,1)$};
	\draw [fill=uuuuuu] (1.3763819204711736,1.7013016167040798) circle (2pt);
	\draw[color=uuuuuu] (1.54,1.1) node {$(\cot \frac{\pi}{2n+1}, y)$};
	\draw [fill=uuuuuu] (-1.3763819204711736,1.7013016167040798) circle (2pt);
	\draw[color=uuuuuu] (-1.5,1.1) node {$(-\cot \frac{\pi}{2n+1}, y)$};
	\end{tikzpicture}	
	\end{minipage}
	\hfill
	\begin{minipage}[b]{0.4\linewidth}
		\definecolor{uuuuuu}{rgb}{0.26666666666666666,0.26666666666666666,0.26666666666666666}
		\definecolor{ududff}{rgb}{0.30196078431372547,0.30196078431372547,1}   
		\begin{tikzpicture}[line cap=round,line join=round,>=triangle 45,x=1cm,y=1cm]
		\clip(-2.9,-1.17) rectangle (4.6,6.65);
		\draw [line width=1pt] (2.414213562373095,0) -- (2.414213562373095,12.65);
		\draw [line width=1pt] (-2.414213562373095,0) -- (-2.414213562373095,12.65);
		\draw [shift={(1,0)},line width=1pt]  plot[domain=0:2.356194490192345,variable=\t]({1*1.414213562373095*cos(\t r)+0*1.414213562373095*sin(\t r)},{0*1.414213562373095*cos(\t r)+1*1.414213562373095*sin(\t r)});
		\draw [shift={(-1,0)},line width=1pt]  plot[domain=0:2.356194490192345,variable=\t]({-1*1.414213562373095*cos(\t r)+0*1.414213562373095*sin(\t r)},{0*1.414213562373095*cos(\t r)+1*1.414213562373095*sin(\t r)});
		\draw [fill=ududff] (0,1) circle (2.5pt);
		\draw[color=black] (0.16,.44) node {$(0,1)$};
		\draw [fill=uuuuuu] (2.414213562373095,0) circle (2pt);
		\draw[color=uuuuuu] (2,-0.4) node {$(\cot \frac{\pi}{4n}, 0)$};
		\draw [fill=uuuuuu] (-2.414213562373095,0) circle (2pt);
		\draw[color=uuuuuu] (-1.9,-0.4) node {$(-\cot \frac{\pi}{4n}, 0)$};
		\end{tikzpicture}
	\end{minipage}

	\caption{Fundamental domains for the Veech groups of the double $2n+1$-gon, on the left, and the $4n$-gon, on the right. The number $y$ is such that the circular boundary meets the vertical boundary perpendicularly. The angle at the point $(0,1)$ is $\frac{2\pi}{2n+1}$ (left), and  $\frac{\pi}{2n}$ (right).}
	
	\label{disque_DP_octogone} 
	
\end{figure}
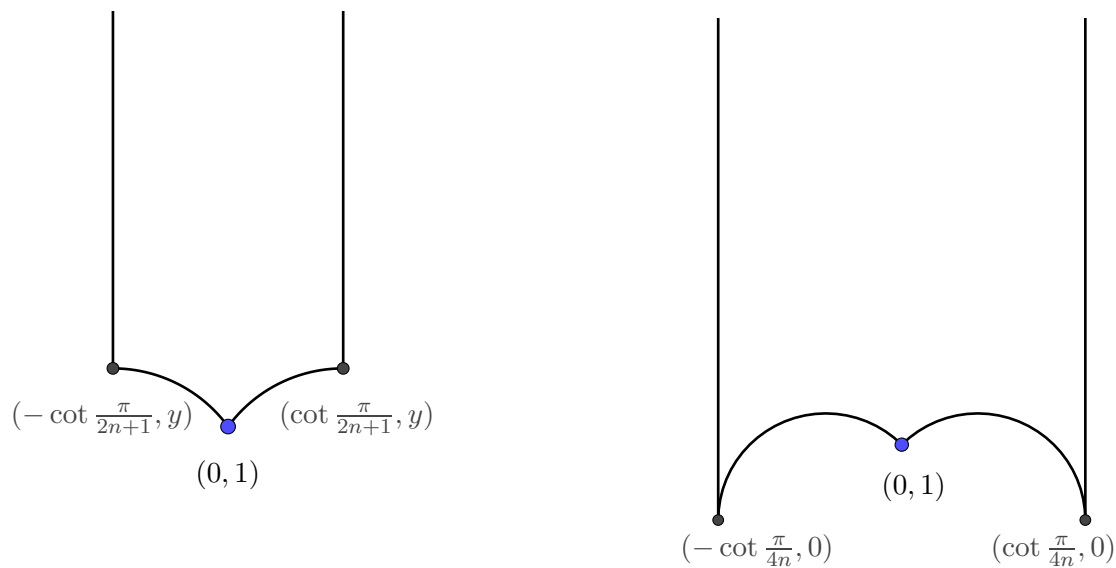

\section{Veech surfaces}
When facing a daunting dynamical system, and looking for closed invariant sets, the first thing to do is to look  for \textit{small} closed invariant sets: if your dynamical system is an $\R$-action (a flow), you are going to look for fixed points, or periodic orbits. In the case of an action by a larger group, you are going to look for closed (topologically speaking) orbits. If the $\mathrm{GL}_2^+ (\R)$-orbit of a holomorphic differential 
$(X, \omega)$ is closed in $\mathcal{H}_g$, we say $(X, \omega)$ is a 
\textbf{Veech surface\index{Veech surface}\index{surface!Veech}}.



\subsection{The Smillie-Weiss theorem}
 What Veech was after was Veech groups with finite co-volume. Later it was proved in \cite{SW} that having a closed Teichm\H{u}ller disk is equivalent to having a finite-covolume Veech group. 

What follows, while very far from a proof, is meant to convey a bit of the idea.

First, let us assume that for some Abelian differential $(X,\omega)$, $\Gamma=SL(X, \omega)$ has finite co-volume in $\Hyp^2$. Then 
$\mathrm{SL}_2 (\R) /\Gamma$
consists of a compact part $K$ and finitely many cusps $C_1, \ldots, C_n$ (see \cite{S.Katok}). 
Therefore $K.(X, \omega)$ is compact, so it is closed in $\mathcal{H}_g$, and 
$C_i.(X, \omega)$ is closed in $\mathcal{H}_g$ unless there exists a sequence $A_n$ of matrices in $C_i$, which converges to infinity in $\mathrm{SL}_2 (\R)$, and such that $A_n.(X, \omega)$ does not go to infinity in $\mathcal{H}_g$. But recall that every cusp of $\Hyp^2 / \Gamma$ is stabilized by a parabolic element of the Veech group, which in turns yields a decomposition of $X$ into cylinders of closed geodesics. When $A_n$ goes to infinity, since $A_n \in C_i$, the length of the closed geodesics must go to zero, which means that $A_n.(X, \omega)$ leaves every compact set of $\mathcal{H}_g$.

Conversely, let let us assume that for some Abelian differential $(X,\omega)$,
the orbit $\mathrm{GL}_2^+ (\R).(X,\omega)$ is closed in $\mathcal{H}_g$. First, let us observe that since $\mathrm{GL}_2^+ (\R).(X,\omega)$ is closed in $\mathcal{H}_g$, the embedding of $\mathrm{SL}_2 (\R) /\Gamma$ into $\mathcal{H}_g$ is proper (the inverse image of any compact set is compact). This can be seen from the Magic Wand Theorem of \cite{EM}: since $\mathrm{GL}_2^+ (\R).(X,\omega)$ is closed, it must be an embedded submanifold, in particular, it is properly embedded. This means that the ends at infinity of $\mathrm{GL}_2^+ (\R).(X,\omega)$ in $\mathcal{H}_g$ are exactly the images in $\mathcal{H}_g$ of the ends at infinity of $\mathrm{SL}_2 (\R) /\Gamma$.

 Now let us ask ourselves, what kind of ends at infinity a closed Teichm\H{u}ller disk may have?

The first two examples that come to mind are a hyperbolic funnel (an end at infinity of a quotient of the hyperbolic disk by a hyperbolic element), and a cusp (the thin end a quotient of the hyperbolic disk by a parabolic element). One difference between the two is that the former has infinite volume, while the latter has not. Another difference is that geodesics that go to infinity in a cusp are all asymptotic to each other, while there exists a point $x$ in $\Hyp^2$, and an open interval $I$ of directions  such that any geodesic going from $x$ with direction in $I$ goes to infinity in the funnel. Now we apply Theorem \ref{thmKMS}: among those directions, there must be a uniquely ergodic one, let us call it $\theta$ (meaning that the vertical flow of $e^{i\theta}\omega$ is uniquely ergodic). But then, by Masur's Criterion, geodesics corresponding to uniquely ergodic directions do not go to infinity. This contradiction shows that the ends at infinity of a  Teichm\H{u}ller disk are cusps. 

Of course, we also have to prove that there are but finitely many cusps. Note that by \cite{HS2}, a Veech group may be infinitely generated, but in that case it is not the Veech group of a Veech surface. Here we draw the Magic Wand again: orbits closures are algebraic subvarieties of $\mathcal{H}_g$, so orbit closures of dimension one cannot have infinitely many cusps.  Now,  since cusps have finite volume, a closed Teichm\H{u}ller disk is the union of a compact part, and finitely many cusps, so it has finite volume.

\subsection{The Veech alternative}
The Veech alternative (see \cite{Veech89}) is another great example of sowing in the parameter space and reaping in the phase space. It says that every direction on a Veech surface is either uniquely ergodic, or completely periodic. This means that if $(X,\omega)$ is a Veech surface, for every $\theta \in \R$, the vertical flow of $(X,e^{i\theta}\omega)$ is either uniquely ergodic, or every orbit which does not hit a singularity is periodic. Beware this does not mean the flow itself is periodic, for the periods of the orbits may not be commensurable.

 Let us give some flavour of the proof. Assume  the vertical flow  on a Veech surface $(X,\omega)$ is not uniquely ergodic, so the orbit of $\omega$ under the Teichm\H{u}ller geodesic flow goes to infinity. Now, since $(X,\omega)$ is a Veech surface, the only ends at infinity of its Teichm\H{u}ller disk are cusps; and a geodesic which goes to infinity in a cusp must be invariant by the parabolic element which generates the cusp.

\subsubsection{Examples of Veech surfaces}

The first examples  found,  by Veech (\cite{Veech89}), are  the surfaces obtained by identifying opposite sides of a regular $2n$-gon.

 Once people got interested in Veech surfaces, it was immediately realized (see \cite{GJ}) that square-tiled surfaces are Veech surfaces, just because their Veech groups are (conjugate to) subgroups of $\mathrm{SL}_2(\Z)$, which has finite co-volume. 
  
In general, finding the Veech surfaces in a given stratum is a hard problem. However, in genus two,  it was solved by McMullen in \cite{Mc, Mc03, Mc05bis, Mc06, Mc06b, Mc07}, see Subsection \ref{homologicalDetour}.

\section{Classification of orbits in genus two}\label{genus2} 
The Magic Wand, magic as it is, does not tell us everything about the orbit closures. For instance, in most strata, all we know is that
\begin{itemize}
	\item by ergodicity, almost every orbit is dense
	\item the union of all orbits of square-tiled surfaces (each of which is closed) is  dense.	
\end{itemize}
When are there orbit closures of intermediate dimension? Are there closed orbits (i.e. Veech surfaces) besides those of square-tiled surfaces? Speaking of square-tiled surfaces, how do they distribute in Teichm\H{u}ller disks?   For instance, could it be that each square-tiled surface is  the only square-tiled surface in its Teichm\H{u}ller disk?

The answers  are known in genus two: we have a complete list of non-square-tiled Veech surfaces (\cite{Calta}, \cite{Mc}), a complete list of 
orbit closures of intermediate dimension (\cite{Mc07}), and, in the stratum $\mathcal{H}(2)$,  a complete list of orbits of \textit{primitive} (more on this later) square-tiled surfaces (\cite{HL}).
\subsection{Square-tiled surfaces}

Observe that a square may be subdivided into little rectangles, and then we may use some matrix in $\mathrm{GL}_2(\R)$ to square the rectangles, so a square-tiled surface may be square-tiled in many ways. 

Following \cite{HL}, we say a square-tiled surface is primitive if it is not obtained from another square-tiled surface by subdivising squares into rectangles and then applying a matrix to square the rectangles. 

Then \cite{HL} says that for $n >4$, there are exactly two Teichm\H{u}ller disks of primitive $n$-square surfaces $\mathcal{H}(2)$. For $n=3,4$, there is only one Teichm\H{u}ller disk of primitive $n$-square surfaces. Each Teichm\H{u}ller disk contains finitely many primitive $n$-square surfaces, the exact number is given by the index of the Veech group in $\mathrm{SL}_2(\Z)$. In genus two it turns out that this index is never $1$, however, there exist square-tiled surfaces of higher genus whose Veech group is $\mathrm{SL}_2(\Z)$ (see ``Eierlegende Wollmilchsau" and ``Ornythorynque" in \cite{FM}). Even in genus two the situation is not fully understood, for instance, we don't know how many Teichm\H{u}ller disks of primitive $n$-square surfaces there are in $\mathcal{H}(1,1)$.

\subsection{A homological detour }\label{homologicalDetour}
\subsubsection{The tautological subspace}
Since the complex-valued 1-form $\omega$ is holomorphic, the real-valued 1-forms $\Re (\omega)$ and $\Im (\omega)$ are closed, by the Cauchy-Riemann relations. We denote $S$ the 2-dimensional subspace of $H^1(X,\R)$ generated by the cohomology classes of $\Re (\omega)$ and $\Im (\omega)$.

Recall that $H^1(X,\R)$ has a symplectic structure: let $\left[ X \right] \in H^2(X,\R)$ be the fundamental class, then 
\[ 
\forall a, b \in H^1(X,\R), \exists c \in \R, a \wedge b = c \left[ X \right].
 \]
 The symplectic 2-form is then defined as $(a,b) \mapsto c$.
 It is Poincaré dual to the intersection form (the bilinear form $\mbox{Int}$ on $H_1(X,\Z)$ such that $\mbox{Int}(h,k)$ is the total intersection, counted with signs, of any two representatives of $h$ and $k$ in transverse position) on $H_1(X,\R)$, that is, if 
 $P : H_1(X,\R) \longrightarrow H^1(X,\R)$ is the Poincaré map,  then 
 \[ 
 \forall h,k \in H_1(X,\R), P(h)\wedge P(k) = \mbox{Int} (h,k) 
 \left[ X \right].
  \]
  The subspace $S$ is symplectic, meaning that the 2-form $.\wedge .$, restricted to $S \times S$, is non-degenerate.
 Indeed, recall that in an appropriate chart, $\omega=dz=dx+idy$, so 
 \[ \Re (\omega)\wedge \Im (\omega) = dx\wedge dy. \]  
  The symplectic orthogonal $S^{\perp}$ of $S$ is the set of cohomology classes
  $c$ such that $c \wedge \Re (\omega)= c \wedge \Im (\omega)=0$. We leave it as an exercise for the reader that in a symplectic vector space, the symplectic orthogonal of a symplectic subspace is also symplectic.

  Now let us take a look at how the Veech group $\Gamma$ acts on $S$. Let $\gamma$ be an element of $\Gamma$, and let 
  $ \begin{pmatrix}
  a & b  \\ 
  c & d  
  \end{pmatrix} 
  \in \mathrm{SL}_2(\R)$ be the  matrix of the derivative of $\gamma$, seen as a diffeomorphism of $X$ minus its singular set. Let $(x,y)$ be a point in $X$, in local coordinates, and let $(u,v)$ be a tangent vector to $X$ at $(x,y)$. Then 
  \begin{align*}
  \gamma^* (dx)_{(x,y)}.(u,v)&=dx(d\gamma_{ (x,y)}.(u,v))= dx(au+bv, cu+dv)
  =au+bv \\
  \gamma^* (dy)_{(x,y)}.(u,v)&=dy(d\gamma_{ (x,y)}.(u,v))= dy(au+bv, cu+dv)
  =cu+dv
  \end{align*}
so the action of  $\Gamma$ by pull-back on $S$, endowed with the basis 
$\left[ dx\right] , \left[ dy\right] $, is given by the same matrix as the linear action of $\gamma$ on $\R^2$. For this reason $S$ is sometimes called the \textbf{tautological subspace\index{tautological subspace}} of $H^1(X,\R)$.

 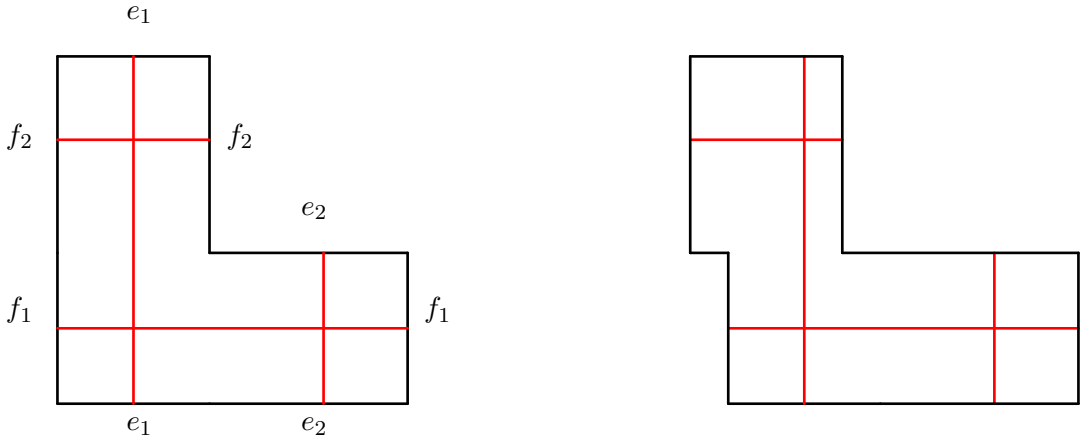
\begin{figure}[b!]
 		\begin{minipage}[b]{0.5\linewidth}
 			\begin{tikzpicture}[line cap=round,line join=round,>=triangle 45,x=1cm,y=1cm]
 			\clip(-1,-1) rectangle (8.4,6);
 			\draw [line width=1pt] (0,0)-- (2,0);
 			\draw [line width=1pt] (2,0)-- (4.60555127546399,0);
 			\draw [line width=1pt] (4.60555127546399,0)-- (4.60555127546399,2);
 			\draw [line width=1pt] (4.60555127546399,2)-- (2,2);
 			\draw [line width=1pt] (2,2)-- (2,4.60555127546399);
 			\draw [line width=1pt] (2,4.60555127546399)-- (0,4.60555127546399);
 			\draw [line width=1pt] (0,4.60555127546399)-- (0,2);
 			\draw [line width=1pt] (0,2)-- (0,0);
 			\draw[color=black] (1.08,-0.3) node {$e_1$};
 			\draw[color=black] (3.38,-0.3) node {$e_2$};
 			\draw[color=black] (5,1.24) node {$f_1$};
 			\draw[color=black] (3.38,2.56) node {$e_2$};
 			\draw[color=black] (2.4,3.54) node {$f_2$};
 			\draw[color=black] (1.08,5.16) node {$e_1$};
 			\draw[color=black] (-0.5,3.54) node {$f_2$};
 			\draw[color=black] (-0.5,1.24) node {$f_1$};
 				\draw [line width=1pt, color=red] (0,1)-- (4.60555127546399,1);
 			\draw [line width=1pt, color=red] (1,0)-- (1,4.60555127546399);
 			\draw [line width=1pt, color=red] (0,3.5)-- (2,3.5);
 			\draw [line width=1pt, color=red] (3.5,0)-- (3.5,2);

 			\end{tikzpicture}
 		\end{minipage}
 \hfill
 		\begin{minipage}[b]{0.4\linewidth}   
 			\begin{tikzpicture}
 		[line cap=round,line join=round,>=triangle 45,x=1cm,y=1cm]
 		\clip(-1,-1) rectangle (8.4,6);
 		\draw [line width=1pt] (0,0)-- (2,0);
 		
 		\draw [line width=1pt, color=red] (0,1)-- (4.60555127546399,1);
 		\draw [line width=1pt, color=red] (1,0)-- (1,4.60555127546399);
 			\draw [line width=1pt, color=red] (-0.5,3.5)-- (1.5,3.5);
 		\draw [line width=1pt, color=red] (3.5,0)-- (3.5,2);
 		
 			\draw [line width=1pt] (2,0)-- (4.60555127546399,0);
 		\draw [line width=1pt] (4.60555127546399,0)-- (4.60555127546399,2);
 		\draw [line width=1pt] (4.60555127546399,2)-- (1.5,2);
 		\draw [line width=1pt] (1.5,2)-- (1.5,4.60555127546399);
 		\draw [line width=1pt] (1.5,4.60555127546399)-- (-0.5,4.60555127546399);
 		\draw [line width=1pt] (-0.5,4.60555127546399)-- (-0.5,2);
 		\draw [line width=1pt] (-0.5,2)--(0,2);
 		\draw [line width=1pt] (0,2)-- (0,0);
 			\end{tikzpicture}
 		\end{minipage}

	\caption{On the left, the surface $L(a,1)$. The sides $e_1$ and $f_1$ have length $1$, while the sides $e_2$ and $f_2$ have length $a-1$. The surface decomposes into two  cylinders of horizontal closed geodesics, the lower one of width $a$ and height $1$, and the upper one of width $1$ and height $a-1$. 
	On the right, the top rectangle in the surface $L(a,1)$ has been translated to the left. Sides are still identified by vertical or horizontal translations. The resulting surface lies in the stratum $\mathcal{H}(1,1)$, like $St(4)$, but  it has the same absolute periods as $L(a,1)$. The closed curves with respect to which the periods are computed are drawn in red.} \label{La1}

\end{figure}

\subsubsection{The trace field}  
 The \textbf{trace field\index{trace field}} $K(\Gamma)$ of a subgroup $\Gamma$ of $\mathrm{SL}_2 (\R)$ is the subfield of $\R$ generated by the traces of the elements of $\Gamma$. We are going to see that 
 \begin{lemma}
 	When $\Gamma$ is the Veech group of some translation surface of genus two,  $K(\Gamma)=\Q\left[ \sqrt{d}\right] $ for some $d \in \N^*$.
 \end{lemma}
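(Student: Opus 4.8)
The plan is to exploit the integral symplectic action of $\Gamma$ on cohomology together with the $\Gamma$-invariant splitting $H^1(X,\R)=S\oplus S^{\perp}$ constructed above. Any element of the Veech group is an orientation-preserving affine homeomorphism of $X$, hence induces an automorphism of the integral lattice $H_1(X,\Z)\cong\Z^{4}$ and acts on $H^1(X,\Z)$ by a matrix in $\mathrm{Sp}(4,\Z)$. First I would record that on the tautological plane $S$, in the basis $[\Re\omega],[\Im\omega]$, the element $\gamma$ acts by its defining matrix $A_\gamma\in\mathrm{SL}_2(\R)$ (exactly the computation just made for $S$), so the restriction $\rho_0\colon\gamma\mapsto A_\gamma$ has trace field $K(\Gamma)$ by definition, and is absolutely irreducible as soon as $\Gamma$ is non-elementary (a non-elementary subgroup of $\mathrm{SL}_2(\R)$ has no common eigenline over $\C$). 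The cases where $\Gamma$ is elementary — trivial, generated by a single parabolic or hyperbolic element, or a finite cyclic group of rotations — are disposed of directly: in the first three the traces lie in $\Q(\mathrm{tr}\,\gamma_0)$ for one generator $\gamma_0$, which is at most quadratic by the factorization below, and the rotation groups realizable in genus two have order at most $10$, for which $2\cos(2\pi/m)$ is again at most quadratic.

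Next I would make the bound concrete on a single hyperbolic $\gamma$. Its characteristic polynomial on $H^1(X,\Z)$ is a monic reciprocal integer polynomial of degree $4$, and the invariant splitting $H^1(X,\R)=S\oplus S^{\perp}$ forces it to factor as
\[
\bigl(t^{2}-\mathrm{tr}(A_\gamma)\,t+1\bigr)\bigl(t^{2}-s_\gamma\,t+1\bigr),
\]
where $s_\gamma$ is the trace of $\gamma$ on $S^{\perp}$; both factors are reciprocal because $\gamma$ acts with determinant $1$ on each $2$-dimensional symplectic summand. Comparing coefficients with an integer polynomial gives $\mathrm{tr}(A_\gamma)+s_\gamma\in\Z$ and $\mathrm{tr}(A_\gamma)\,s_\gamma\in\Z$, so $\mathrm{tr}(A_\gamma)$ is an algebraic integer of degree at most $2$, its Galois conjugate being $s_\gamma=\mathrm{tr}(\gamma|_{S^{\perp}})$. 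This pins down each individual trace and exhibits $S^{\perp}$ as carrying the Galois-conjugate representation — the homological shadow of the real multiplication behind McMullen's classification.

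The heart of the matter is upgrading this to the whole field $K(\Gamma)$, since finitely many quadratic numbers need not generate a quadratic field. Here I would invoke the character-theoretic principle that an absolutely irreducible constituent of a representation defined over $\Q$ has all of its $\mathrm{Gal}(\overline{\Q}/\Q)$-conjugates occurring as constituents as well. Applied to $\rho_0=\rho|_S$ inside the rational representation on $H^1(X,\Q)\cong\Q^{4}$, the conjugates $\rho_0^{\sigma}$, one for each embedding $\sigma\colon K(\Gamma)\hookrightarrow\C$, are pairwise non-isomorphic (their characters generate $K(\Gamma)$ and are permuted without fixed points by the Galois group) and each is $2$-dimensional, so they fit into $H^1(X,\C)$ only if $2\,[K(\Gamma):\Q]\le 2g=4$. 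Hence $[K(\Gamma):\Q]\le 2$; since traces of a Fuchsian group are real, $K(\Gamma)\subseteq\R$, and therefore $K(\Gamma)=\Q$ or a real quadratic field, i.e. $K(\Gamma)=\Q[\sqrt{d}\,]$ for some $d\in\N^{*}$ (with $d=1$ giving $\Q$). The one step that genuinely needs care, and which I expect to be the main obstacle, is this descent: justifying that $\rho_0$ may be conjugated to have entries in its trace field and that its Galois conjugates are forced to appear in $H^1$ — equivalently, that $S$, its conjugate $S^{\perp}$, and any further conjugates together exhaust the $2g$-dimensional space. Everything else is linear algebra over $\Z$.
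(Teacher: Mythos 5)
Your first step is essentially the paper's, in a different guise: the paper considers the integer operator $\gamma^* + (\gamma^*)^{-1}$ on $H^1(X,\Z)$, notes that it restricts to $S$ as $\mathrm{Tr}(\gamma)\cdot\mathrm{Id}$, so that $\mathrm{Tr}(\gamma)$ is a double eigenvalue of a $4\times 4$ integer matrix and hence has degree at most $2$; your factorization of the characteristic polynomial of $\gamma^*$ into the two reciprocal integer quadratics coming from $S$ and $S^{\perp}$ is the same computation (with the small added virtue of identifying the Galois conjugate of $\mathrm{Tr}(\gamma)$ as the trace on $S^{\perp}$, something the paper only exploits later, for real multiplication). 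Where you genuinely diverge is the uniformity step, i.e.\ forcing all traces into a \emph{single} quadratic field. The paper does this with one more line of the same linear algebra: the double-eigenvalue argument applies verbatim to $\sum_i n_i\bigl(\gamma_i^* + (\gamma_i^*)^{-1}\bigr)$, an integer matrix restricting to $S$ as $\bigl(\sum_i n_i \mathrm{Tr}(\gamma_i)\bigr)\mathrm{Id}$, so every $\Q$-linear combination of traces has degree at most $2$, and two traces generating distinct quadratic fields would produce a degree-four sum (the $\sqrt{2}+\sqrt{3}$ phenomenon). This needs no irreducibility hypothesis and no case analysis. You instead invoke Galois stability of the constituents of the rational representation on $H^1(X,\Q)$ and the dimension count $2\,[K(\Gamma):\Q]\le 4$; that is a correct and more conceptual route (it is essentially the standard proof that trace fields of Veech groups have degree at most $g$ in genus $g$), but it costs you absolute irreducibility of $\rho_0$, and hence an elementary/non-elementary dichotomy that the paper never has to face.

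That dichotomy is where your write-up has genuine, though reparable, gaps. First, your list of elementary groups is incomplete: a discrete elementary subgroup of $\mathrm{SL}_2(\R)$ may also be infinite dihedral (a hyperbolic cyclic group extended by an order-two elliptic swapping the endpoints of the axis); this case is harmless, since the involuted elements $\sigma h^n$ all have trace $0$ and the trace field is $\Q(\mathrm{Tr}(h))$, but it must be addressed. Second, the claim that rotation orders at most $10$ give ``at most quadratic'' values of $2\cos(2\pi/m)$ is false as stated: $m=7$ and $m=9$ are at most $10$ and give cubic values; these orders are excluded only because they do not occur for genus-two surfaces, and the cleanest way to see that is your own first step, since the quartic factorization bounds \emph{every} trace, elliptic ones included, by degree $2$ --- which also makes the separate treatment of finite cyclic groups unnecessary. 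Third, the descent worry you flag at the end dissolves if the principle is phrased correctly: you do not need to conjugate $\rho_0$ so that its entries lie in its trace field; it suffices that for any $\sigma\in\mathrm{Aut}(\C)$ the $\sigma$-twist of the complexified representation is isomorphic to itself (the representation being defined over $\Q$), hence permutes the Jordan--H\"older constituents and sends $\rho_0$ to a constituent with character $\sigma\circ\chi_0$; distinct restrictions $\sigma|_{K(\Gamma)}$ give distinct characters, hence distinct two-dimensional constituents, and $2\,[K(\Gamma):\Q]\le 4$ follows with no semisimplicity or rationality-of-entries assumption. With these three repairs your argument is complete, but note how much shorter the paper's linear-combination trick is, precisely because it bypasses irreducibility altogether.
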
 
Of course, for most surfaces, the Veech group is trivial, so the trace field is $\Q$. The trace field is most interesting when the Veech group is large, especially for Veech surfaces.
 
 It is proved in \cite{GJ} that for a Veech surface of any genus, having $\Q$ as its trace field  is equivalent to being in the Teichm\H{u}ller disk of a square-tiled surface.  In the genus two case, this means that if $d$ is a square, then $X$ is parallelogram-tiled (that is, $X$ lies in the Teichm\H{u}ller disk of a square-tiled surface).
 
 Take $\gamma \in \Gamma$, and consider the induced automorphism $\gamma^*$ of  $H^1(X,\R)$. Since $\gamma^*$ is induced by a homeomorphism of $X$, it preserves the integer lattice $H^1(X,\Z)$ of $H^1(X,\R)$, and the same goes for $(\gamma^*)^{-1}$. Thus  the endomorphism $\gamma^* + (\gamma^*)^{-1})$ of $H^1(X,\R)$ may be represented by an integer matrix, in particular its eigenvalues are algebraic numbers. 
 
 But recall that $\gamma^*$, restricted to $S$, is just $\gamma$, so $\gamma^* + (\gamma^*)^{-1}$, restricted to $S$, is just $\gamma + \gamma^{-1}= Tr(\gamma).Id$. Therefore the trace of $\gamma$ is a double eigenvalue of 
 $\gamma^* + (\gamma^*)^{-1}$, so the square of the minimal polynomial $\chi$ of  $Tr(\gamma)$ divides the characteristic polynomial of $\gamma^* + (\gamma^*)^{-1}$, hence $\chi$ has degree at most two. 
 
 We have just proved that for every $\gamma \in \Gamma$, there exists $d \in \N^*$ such that $Tr(\gamma)\in \Q\left[ \sqrt{d}\right] $. The same argument applies to any linear combination of traces of elements of $\Gamma$. We claim that $d$ may be chosen independantly of $\gamma$. If it were not the case, say, $Tr(\gamma) \in \Q\left[ \sqrt{d}\right] $ and $Tr(\gamma') \in \Q\left[ \sqrt{d'}\right] $ for some $\gamma' \neq \gamma$ and $d' \neq d$, then we could find a non-quadratic element in $K(\Gamma)$ (think, for instance, that $\sqrt{2}+\sqrt{3}$ is not quadratic). Therefore, $K(\Gamma)=\Q\left[ \sqrt{d}\right] $ for some $d \in \N^*$.
 
 \textbf{Example.} Consider the surface $L(a,1)$ on the left of  Figure \ref{La1}, where 
 $a=\frac{1+\sqrt{d}}{2}$. For $d=5$, this surface lies in the Teichm\H{u}ller curve of the double pentagon, it is usually called the Golden L (see \cite{DFT, DL}). For $d=2$, it can be shown to lie in the Teichm\H{u}ller curve of the regular octagon (see \citen{SU}).
 
 The matrix $A := \begin{pmatrix}
 1 & 4a  \\ 
 0 & 1  
 \end{pmatrix} $
 acts on the lower cylinder, whose height is $1$ and whose width is $a$,  as the $4$-th power of the horizontal Dehn twist, because 
 \[A.\begin{pmatrix}
 0  \\ 
  1  
 \end{pmatrix} = \begin{pmatrix}
 0  \\ 
 1  
 \end{pmatrix} +4\begin{pmatrix}
 a  \\ 
  0 
 \end{pmatrix},
 \]
 and it acts on the upper cylinder,  whose height is $a-1$ and whose width is $1$,  as the $(d-1)$-th power of the horizontal Dehn twist, because  
 \[
A.\begin{pmatrix}
 0  \\ 
 a-1  
 \end{pmatrix} = \begin{pmatrix}
 4a(a-1)  \\ 
 a-1  
 \end{pmatrix} = \begin{pmatrix}
 0  \\ 
 a-1  
 \end{pmatrix}
 +(d-1)\begin{pmatrix}
 1  \\ 
 0  
 \end{pmatrix}.
  \]

 Thus the matrix $A$ lies in the Veech group of $L(a,1)$. The same arguments apply to the vertical cylinders, so the matrix  $\begin{pmatrix}
 1 & 0  \\ 
 4a & 1  
 \end{pmatrix} $
 also lies in the Veech group, and so does their product 
$ \begin{pmatrix}
 1 +16a^2 & 4a  \\ 
 4a & 1  
 \end{pmatrix}$, 
 whose trace is $2+16a^2=2+4d+8\sqrt{d}$, hence the trace field of $L(a,1)$ is 
 $\Q\left[ \sqrt{d}\right] $.

 \subsubsection{The Jacobian torus}
 For any manifold $X$, the \textbf{Jacobian torus\index{Jacobian torus}\index{Jacobian!torus}} $Jac(X)$ of $X$ is the quotient of $H^1(X,\R)$ by the integer lattice $H^1(X,\Z)$. When $X$ is a Riemann surface, $H^1(X,\R)$ identifies with the space of Abelian differentials, by sending an Abelian differential to the cohomology class of its real part, and $H^1(X,\Z)$ acts by translations, which are holomorphic, so $Jac(X)$ is actually a complex manifold.
 
 The Jacobian torus also has a symplectic structure, given by the wedge pairing, and the symplectic structure is compatible   with the complex structure, in the following way: any complex subspace in $H^1(X,\R)$ is also a symplectic subspace. If $E$ is any complex subspace of $H^1(X,\R)$, then the symplectic orthogonal $E^{\perp}$ is also a complex subspace. 
 For instance, the canonical subspace $S$ and its symplectic orthogonal are both complex subspaces of $H^1(X,\R)$.
 
 An \textbf{endomorphism} of $Jac(X)$ is a $\C$-linear endomorphism of $H^1(X,\R)$, which preserves the integer lattice, so it quotients to a self-map of $Jac(X)$, and is self-adjoint with respect to the symplectic form. Such a thing actually exists, here is an example.  
 
 Assume $X$ is a translation surface of  genus two, and $\gamma$ is an element of the Veech group of $X$, such  that its trace is irrational (hence quadratic since the genus is two). Think of the example we have just seen  on the surface $L(a,1)$.

  Then both $\gamma^*$ and $(\gamma^*)^{-1}$ are $\R$-linear endomorphisms of $H^1(X,\R)$ which preserve the integer lattice, so the same goes for $\gamma^*+(\gamma^*)^{-1}$. Recall that 
 $T:=\gamma^* + (\gamma^*)^{-1}$, restricted to $S$, is just $Tr(\gamma).Id$, so $t := Tr(\gamma)$ is a double eigenvalue of  $T$. Thus the Galois conjugate $\bar{t}$ of 
 $t$ is also a double eigenvalue of  $T$. But both $\gamma^*$ and $(\gamma^*)^{-1}$ are symplectic, so, since they preserve $S$, they must preserve the symplectic orthogonal $S^{\perp}$. Therefore, since the dimension of $S^{\perp}$ is two (recall that the genus of $X$ is two), we get that $T$, restricted to $S^{\perp}$, is $\bar{t} Id$. This proves that $T$ is $\C$-linear, since it may be expressed by the matrix  $ \begin{pmatrix}
 t & 0  \\ 
 0 & \bar{t} 
 \end{pmatrix} $
 in some complex basis. 
 
 This is the part that breaks down in higher genus, and the reason why McMullen's results in genus two have been dubbed miraculous (see \cite{Zorich}). Let us see exactly how it breaks down. Assume $X$ is a translation surface of genus $g>2$, with a trace field of degree $g$ (the maximal possible degree). Let  $\gamma$ be an element of the Veech group of $X$, whose trace is an algebraic integer of degree $g$. Then the $g-1$ Galois conjugates of $Tr(\gamma)$ are double eigenvalues of $T=\gamma^* + (\gamma^*)^{-1}$, so $S^{\perp}$ decomposes as a direct sum of  2-dimensional real subspaces, on each of which $T$ acts by homothety. The trouble is, those 
 2-dimensional real subspaces have no reason to be complex subspaces, even though $S^{\perp}$ is a complex subspace, so we cannot conclude that $T$ is $\C$-linear. In some cases (see \cite{LN}) $T$ happens to be $\C$-linear, so McMullen's methods may be applied.
 
 To prove that $T$ is an endomorphism of $Jac(X)$, we still have to prove that 
$T$ is self-adjoint with respect to the symplectic form. Since $\gamma$ is a homeomorphism of $X$, $\gamma^*$ preserves the symplectic form, so the adjoint of $\gamma^*$ is $(\gamma^*)^{-1}$. Similarly the adjoint of  $(\gamma^*)^{-1}$
is $\gamma^*$, so  $T=\gamma^* + (\gamma^*)^{-1}$ is self-adjoint. 
 
\subsubsection{Real multiplication}
Let $X$ be a translation surface of genus two, with trace field 
$\Q\left[ \sqrt{d}\right] $ for some non-square $d \in \N$. Let $\gamma$ be an element of the Veech group of $X$, with irrational trace, and let 
$T=\gamma^* + (\gamma^*)^{-1}$. 

The  endomorphisms of $Jac(X)$ form a ring, and the subring generated by $Id$ and $T$ is isomorphic to a finite index subring of the ring of integers of 
$\Q\left[ \sqrt{d}\right] $, the isomorphism being the trace of the restriction to $S$. Of course there is another isomorphism, which is the trace of the restriction to $S^{\perp}$. 

For any translation surface, when the ring of endomorphisms of $Jac(X)$ has a subring isomorphic to (some finite index subring of) the integer ring of some number field $K$, we say $Jac(X)$ has 
\textbf{real multiplication\index{real multiplication}} by $K$. So, what we have proved so far is 

\begin{lemma}
The Jacobian of any non-square-tiled Veech surface of genus two has real multiplication by some 
$\Q\left[ \sqrt{d}\right] $, with $d \in \N$ non-square.	
\end{lemma}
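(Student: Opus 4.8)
The plan is to assemble the computations of the three preceding subsections into the stated conclusion. Let $X$ be a non-square-tiled Veech surface of genus two, and write $\Gamma = SL(X,\omega)$ for its Veech group. First I would invoke the trace-field Lemma above to write $K(\Gamma) = \Q\left[\sqrt{d}\right]$ for some $d \in \N^*$, and then argue that $d$ is non-square. Indeed, if $d$ were a square then $K(\Gamma) = \Q$; but by the result of \cite{GJ} recalled above, a Veech surface whose trace field is $\Q$ lies in the Teichm\H{u}ller disk of a square-tiled surface, contradicting our hypothesis. In particular $\Q\left[\sqrt{d}\right]$ is a genuine real quadratic field, so there is at least one $\gamma \in \Gamma$ with irrational trace $t := Tr(\gamma)$ --- otherwise every trace would be rational and $K(\Gamma)$ would reduce to $\Q$.

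Next I would fix such a $\gamma$ and set $T := \gamma^* + (\gamma^*)^{-1}$, acting on $H^1(X,\R)$. As established in the Jacobian-torus subsection, $T$ is an honest endomorphism of $Jac(X)$: it is $\R$-linear and preserves the integer lattice $H^1(X,\Z)$ because $\gamma^*$ and $(\gamma^*)^{-1}$ do; it is $\C$-linear because it acts as $t\,Id$ on the tautological plane $S$ and as $\bar{t}\,Id$ on the symplectic orthogonal $S^{\perp}$, both of which are complex subspaces when the genus equals two; and it is self-adjoint for the symplectic form, the symplectic adjoint of $\gamma^*$ being $(\gamma^*)^{-1}$ and conversely.

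Then I would consider the subring $\mathcal{O} := \Z[Id, T]$ of $\mathrm{End}(Jac(X))$. Sending an element of $\mathcal{O}$ to the scalar by which it acts on $S$ defines a ring homomorphism $\mathcal{O} \to \R$ carrying $a\,Id + b\,T$ to $a + bt$, with image $\Z[t]$. Since $t$ is a trace, it is a double eigenvalue of the integer matrix representing $T$ on $H^1(X,\Z)$, hence an algebraic integer; being quadratic and generating $\Q\left[\sqrt{d}\right]$, it makes $\Z[t]$ an order, i.e. a finite-index subring of the ring of integers of $\Q\left[\sqrt{d}\right]$. The homomorphism is injective because $t$ is irrational, so $a + bt = 0$ with $a,b \in \Z$ forces $a = b = 0$. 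Thus $\mathcal{O} \cong \Z[t]$ is a finite-index subring of the integers of $\Q\left[\sqrt{d}\right]$, which is precisely the assertion that $Jac(X)$ has real multiplication by $\Q\left[\sqrt{d}\right]$.

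The only genuinely delicate ingredients are the existence of a $\gamma$ with irrational trace and the $\C$-linearity of the resulting $T$; both rest on the genus being exactly two, exactly as the higher-genus discussion above explains, where the $2$-dimensional real summands of $S^{\perp}$ fail to be complex subspaces and the argument collapses. Everything else is bookkeeping with structures already in place.
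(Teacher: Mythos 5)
Your proof is correct and follows essentially the same route as the paper: the trace-field lemma together with \cite{GJ} to ensure the trace field is a genuine real quadratic field $\Q\left[\sqrt{d}\right]$, then the endomorphism $T=\gamma^*+(\gamma^*)^{-1}$ for a $\gamma$ of irrational trace, its $\C$-linearity via the genus-two splitting $H^1(X,\R)=S\oplus S^{\perp}$, its self-adjointness and lattice preservation, and finally the order $\Z\left[Id,T\right]\cong\Z\left[t\right]$ inside the integers of $\Q\left[\sqrt{d}\right]$. The details you make explicit that the paper leaves implicit --- the existence of $\gamma$ with irrational trace, the integrality of $t$ as an eigenvalue of an integer matrix, and the injectivity of the map $\mathcal{O}\to\Z\left[t\right]$ --- are all correct.
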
  

For instance, the double pentagon has real multiplication by $\Q\left[ \sqrt{5}\right] $, while the octagon has real multiplication by $\Q\left[ \sqrt{2}\right] $. More generally, for any non-square $d \in \N$, the surface $L(a,1)$, with $a=\frac{1+\sqrt{d}}{2}$, has real multiplication by $\Q\left[ \sqrt{d}\right] $.

Now, what is the point of all this, you may ask? The point is that there is a way to know which surfaces have Jacobians with real multiplication, and the stroke of genius was to think of looking at the problem this way.

Let us denote $W_d$ the projection to the moduli space $\mathcal{M}_2$ of the set of translation surfaces of genus two whose Jacobian  has real multiplication by 
$\Q\left[ \sqrt{d}\right] $.

\begin{theorem}
	For $d \in \N$ non-square, $W_d$ is a two-dimensional algebraic subvariety of $\mathcal{M}_2$.
\end{theorem}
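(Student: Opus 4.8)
The plan is to use the Torelli theorem to transport the whole problem from the moduli space $\mathcal{M}_2$ of curves to the moduli space $\mathcal{A}_2$ of principally polarized abelian surfaces, where real multiplication has a classical, manifestly algebraic description. The first observation is that whether $Jac(X)$ has real multiplication by $\Q[\sqrt d]$ depends only on the polarized abelian variety $Jac(X)$, and not on the translation structure $\omega$: by the previous lemma it amounts to the existence of a self-adjoint, $\C$-linear endomorphism $T$ of $Jac(X)$, preserving the integer lattice and satisfying an integral quadratic equation $T^2 - sT + n = 0$ with $\Q(T) = \Q[\sqrt d]$. Hence $W_d$ is genuinely a subset of $\mathcal{M}_2$, namely $W_d = \{[X] : Jac(X) \text{ carries such a } T\}$, and I would study it through the Torelli map
\[
t : \mathcal{M}_2 \longrightarrow \mathcal{A}_2, \qquad [X] \longmapsto (Jac(X), \text{polarization}).
\]

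In genus two the Torelli theorem is especially clean: $t$ is injective, and every principally polarized abelian surface is either a Jacobian or a polarized product of two elliptic curves. Thus $t$ is an open immersion whose image is the complement of the product locus $\mathcal{A}_{1,1} \subset \mathcal{A}_2$, a divisor, consistent with $\dim_\C \mathcal{M}_2 = \dim_\C \mathcal{A}_2 = 3$. Under this identification $W_d = t^{-1}(\mathcal{RM}_d)$, where $\mathcal{RM}_d \subset \mathcal{A}_2$ is the locus of polarized abelian surfaces admitting an endomorphism $T$ as above. So it suffices to prove that $\mathcal{RM}_d$ is a two-dimensional algebraic subvariety of $\mathcal{A}_2$, and then to check that intersecting with the Jacobian locus costs nothing in dimension.

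To describe $\mathcal{RM}_d$ explicitly I would work on $\mathcal{A}_2 = \mathrm{Sp}_4(\Z) \backslash \mathfrak{H}_2$, with $\mathfrak{H}_2$ the Siegel upper half-space of symmetric $2 \times 2$ complex matrices $\tau = (\tau_{ij})$ with positive-definite imaginary part (so $\dim_\C \mathfrak{H}_2 = 3$), a point $\tau$ giving the surface $\C^2 / (\Z^2 + \tau\Z^2)$. For a fixed integral candidate endomorphism, the requirement that it be $\C$-linear for the complex structure determined by $\tau$ is a single Humbert relation
\[
a\tau_{11} + b\tau_{12} + c\tau_{22} + k(\tau_{12}^2 - \tau_{11}\tau_{22}) + l = 0, \qquad b^2 - 4ac - 4kl = d,
\]
where $d$ is to be read as the discriminant of the relevant order in $\Q[\sqrt d]$. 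Each such relation cuts out a codimension-one analytic subset of $\mathfrak{H}_2$, and the $\mathrm{Sp}_4(\Z)$-orbit of all relations of the given discriminant descends to a single closed subset $\mathcal{RM}_d$ of the quotient, the Humbert surface of discriminant $d$. It has codimension one, hence dimension two, and is parametrized finite-to-one by the Hilbert modular surface $\Hyp \times \Hyp / \mathrm{SL}_2(\mathcal{O}_d)$, itself of complex dimension two.

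The main obstacle is the jump from this transcendental description to genuine \emph{algebraicity}: a priori $\mathcal{RM}_d$ is only an analytic subvariety of the analytic space $\mathcal{A}_2$. Here I would invoke the Baily--Borel theorem, which equips $\mathcal{A}_2$ (an arithmetic quotient of a bounded symmetric domain) with a canonical quasi-projective structure and forces modular subvarieties such as $\mathcal{RM}_d$ to be algebraic; classically this is Humbert's theorem that the singular relation defines an algebraic hypersurface. Granting algebraicity, $\mathcal{RM}_d$ is a two-dimensional algebraic subvariety of $\mathcal{A}_2$. Finally, since $\mathcal{RM}_d$ and the product locus $\mathcal{A}_{1,1}$ are distinct divisors, their intersection has dimension at most one, so $W_d = t^{-1}(\mathcal{RM}_d)$ is a dense open subset of $\mathcal{RM}_d$ and is therefore itself a two-dimensional algebraic subvariety of $\mathcal{M}_2$.
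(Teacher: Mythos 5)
Your proof is correct, and it reaches the theorem by a genuinely different route at the central step. The paper shares your overall skeleton --- transfer the problem to the space of principally polarized abelian surfaces, show the real-multiplication locus there has complex dimension two, then argue that Jacobians of smooth curves are generic --- but where you quote the classical theory of Humbert surfaces, the paper's proof is self-contained linear algebra: viewing abelian surfaces as complex structures on $\R^4/\Z^4$ compatible with the symplectic form, it takes the generator $\phi$ of the quadratic order, diagonalizes the associated endomorphism $T_{\phi}$ of $\Q^4$ into two $2$-dimensional, mutually orthogonal symplectic eigenspaces, and observes that the compatible complex structures are exactly those making each eigenspace a complex line, i.e.\ one complex parameter per eigenspace (by the earlier discussion of the Teichm\H{u}ller disk of the torus), hence dimension two. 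That count is what your Humbert relation $a\tau_{11}+b\tau_{12}+c\tau_{22}+k(\tau_{12}^2-\tau_{11}\tau_{22})+l=0$ encodes in Siegel coordinates, so the two arguments are two presentations of the same geometry; the paper's version buys elementariness and makes visible why $W_d$ is uniformized by $\Hyp^2\times\Hyp^2$ (the two parameters are the two factors), while yours buys precision exactly where the paper is vaguest. Indeed, both treatments defer algebraicity (the paper simply cites \cite{Mc03}, you cite Humbert and Baily--Borel, which is the content of that reference), but your handling of the passage back to $\mathcal{M}_2$ --- Torelli as an open immersion onto the complement of the product locus $\mathcal{A}_{1,1}$, so that $W_d=t^{-1}(\mathcal{RM}_d)$ is closed in $\mathcal{M}_2$ and loses at most a one-dimensional set --- is a real improvement on the paper's hand-wave that ``almost all complex tori are Jacobians.'' The only facts you lean on implicitly and should name are that $\mathrm{Sp}_4(\Z)$ acts transitively on singular relations of a fixed discriminant (so $\mathcal{RM}_d$ is a single irreducible divisor, which is what licenses ``distinct divisors meet in dimension at most one'') and that $\mathcal{RM}_d\neq\mathcal{A}_{1,1}$ for $d$ non-square (the latter being the Humbert surface of discriminant $1$); both are classical and of the same standing as the results you already cite.
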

\begin{proof}
	We will focus on the dimension, and refer the reader to \cite{Mc03} for the algebraicity statement. Let us forget about Jacobians for a moment, and consider the set of all complex tori of dimension two (also known, poetically, as principally polarized Abelian varieties). This set may be viewed as the set of all complex structures on $\R^4/\Z^4$, which are compatible with the symplectic structure on $\Z^4$, that is, denoting $(.,.)$ the canonical scalar product and $. \wedge .$ the symplectic structure, for any $a,b$, $a \wedge b = (a,ib)$. 
	
	Among all tori, let us consider those whose endomorphsim ring contains a copy of (a finite index subring of) the integer ring of $\Q\left[ \sqrt{d}\right] $. Let $\phi$ be a generator of said subring, which then is the $\Z$-module of rank two $\Z \left[ \phi \right] = \left\lbrace a +b \phi : a, b \in \Z \right\rbrace $. The embedding of $\Z \left[ \phi \right]$ into the ring of endomorphisms of $\Z^4$ makes $\Z^4$ a 
	$\Z \left[ \phi \right]$-module, necessarily of rank two. Tensorizing by $\Q$,  we may view $\Q^4$ as a two-dimensional vector space over 
	$\Q\left[ \sqrt{d}\right] =\Q \left[ \phi \right]$. 
	
	Let  $\bar{\phi}$ be the Galois conjugate of $\phi$, so  $P_{\phi}=(X-\phi)(X-\bar{\phi})$ is the minimal polynomial of $\phi$, and let $T_{\phi}$ be the endomorphism of $\Q^4$ associated with $\phi$. Then $P_{\phi}(T_{\phi})=0$. Since $T$ preserves $\Z^4$,  $T_{\phi} \neq \phi Id$, so $T_{\phi}$ has both $\phi$ and  $\bar{\phi}$ as eigenvalues, which entails that, as an endomorphism of a two-dimensional  
	$\Q \left[ \phi \right]$-vector space, $T_{\phi}$ is diagonalizable. Both eigenspaces have dimension one over 
	$\Q \left[ \phi \right]$, which is dimension two over $\Q$. Since $T$ is self-adjoint, the eigenspaces are mutually orthogonal, and they are symplectic, for if they were not, since their dimension is two, then the restriction of the symplectic form to either of them would vanish, and since they are orthogonal, the symplectic form on $\Z^4$ would be zero, a contradiction. 
	
	The complex structure on $\R^4/\Z^4$ is then completely determined by the requirement that the eigenspaces be complex lines, and by the choice of a complex structure for each eigenspace, compatible with the symplectic structure. Each choice is determined by one complex parameter (recall our discussion of the Teichm\H{u}ller space, which is also the Teichm\H{u}ller disk,  of the torus), so the complex structure on $\R^4/\Z^4$ is determined by two complex parameters. 
	
	Next, observe that almost all complex tori are Jacobians: essentially, those who are not Jacobians of regular surfaces,  are Jacobians of degenerate surfaces, for instance a bouquet of two 1-dimensional tori. So the set of Jacobians of regular surfaces has dimension two over $\C$.
	
\end{proof}
McMullen's result actually says much more: the image, in  the Teichm\H{u}ller space $\mathcal{T}_2$, of the set of all translation surfaces of genus two whose Jacobian has real multiplication by $\Q\left[ \sqrt{d}\right] $, for $d \in \N$ non-square, is an holomorphically  embedded copy of $\Hyp^2 \times \Hyp^2$. Its image $W_d$ in the moduli space $\mathcal{M}_2$ is called a 
\textbf{Hilbert modular surface\index{Hilbert modular surface}\index{Hilbert!modular surface}}. 

Now we'd like to understand how  Hilbert modular surfaces behave with respect to the $\mathrm{SL}_2(\R)$-action, and the strata. 
\begin{lemma}\label{invarSL2}
	For any non-square $d \in \N$, $W_d$  is $\mathrm{SL}_2(\R)$-invariant.
\end{lemma}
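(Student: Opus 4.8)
The plan is to observe that real multiplication is witnessed by a single endomorphism which is a purely \emph{topological} datum, and that the $\mathrm{SL}_2(\R)$-action leaves that datum untouched, changing only the complex structure. Concretely, if $Jac(X)$ has real multiplication by $\Q[\sqrt d]$ there is a self-adjoint, lattice-preserving, $\C$-linear endomorphism $T$ of $H^1(X,\R)$ generating an order in $\Q[\sqrt d]$ (the map $\gamma^*+(\gamma^*)^{-1}$ is the model to keep in mind). A matrix $A\in\mathrm{SL}_2(\R)$ fixes the underlying differentiable surface, hence fixes the integer lattice $H^1(X,\Z)$ and the intersection (symplectic) form; it alters only the complex structure $J$, replacing it by the complex structure $J'$ of $A.(X,\omega)$. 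Since self-adjointness and preservation of the lattice are expressed entirely through the symplectic form and $H^1(X,\Z)$, the same $T$ remains self-adjoint and lattice-preserving for $A.(X,\omega)$. Thus the whole statement reduces to one point: that $T$ is still $\C$-linear for $J'$, i.e. that $J'$ commutes with $T$.

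Next I would translate $\C$-linearity into a statement about eigenspaces. Because $T$ is self-adjoint with the two real eigenvalues $t$ and $\bar{t}$ (the two real embeddings of a generator), $H^1(X,\R)$ splits as a symplectically orthogonal sum $V_t\oplus V_{\bar t}$ of the two two-dimensional eigenspaces, and a compatible complex structure commutes with $T$ precisely when it preserves each of $V_t$ and $V_{\bar t}$ (makes them complex lines). So the task becomes: show that $J'$ preserves $V_t$ and $V_{\bar t}$. The first handle on this is the tautological plane $S=\mathrm{span}([\Re\omega],[\Im\omega])$. By the computation of the action on period coordinates, $A$ sends $[\Re\omega]$ and $[\Im\omega]$ to real linear combinations of themselves, so $S$ is unchanged; since $S$ is a complex line for $J'$ and $J'$ is compatible with the (unchanged) symplectic form, $J'$ also preserves the symplectic orthogonal $S^\perp$.

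When $\omega$ is an eigenform for $T$ we have $S=V_t$ and $S^\perp=V_{\bar t}$, and $T$ acts on these as the scalars $t$ and $\bar t$; then the fact that $J'$ preserves $S$ and $S^\perp$ immediately gives $J'T=TJ'$, and we are done. The genuinely hard case, and the main obstacle, is when $\omega$ is not an eigenform, so that $S$ meets both $V_t$ and $V_{\bar t}$ and the previous paragraph no longer forces $J'$ to respect the eigenspace splitting. Here I would appeal to the description obtained in the previous theorem, where the locus of complex structures carrying real multiplication by $\Q[\sqrt d]$ was realized, in Teichm\"uller space, as $\Hyp\times\Hyp$ — one factor recording the complex structure on $V_t$, the other on $V_{\bar t}$. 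The crux is then to check that the $\mathrm{SL}_2(\R)$-action preserves this product, acting on each factor separately by a M\"obius transformation; equivalently, that the eigenform and period data attached to $V_t$ and to $V_{\bar t}$ are transformed independently by $A$, so that each eigenspace remains a complex line for $J'$. This independence is exactly the place where genus two is essential — it is the splitting of the Hodge structure into two complex eigenlines — and it is the step I expect to require the real work, the earlier paragraphs being essentially formal.
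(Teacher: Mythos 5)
Your first two paragraphs, together with the eigenform case of your third, are precisely the paper's proof written out in full. The paper's entire argument is the one line that $(\gamma^*)^{-1}$ conjugates $T$ into an endomorphism of the Jacobian of $\gamma.(X,\omega)$; your reformulation (fix the smooth surface, the lattice $H^1(X,\Z)$ and the symplectic form, change only $J$) is the same move, and your identification of the only nontrivial point — that $T$ must still be $\C$-linear for $J'$, which holds because $J'$ preserves $S$ and $S^\perp$ while $T$ acts on them by the real scalars $t$ and $\bar t$ — is exactly what is swept under the rug in the paper's one-liner. Note also that in the paper's setting the eigenform hypothesis is automatic: the endomorphism witnessing real multiplication is $T=\gamma^*+(\gamma^*)^{-1}$ for $\gamma$ in the Veech group, and it was shown just before the lemma that $T$ restricted to $S$ is $\mathrm{Tr}(\gamma)\,\mathrm{Id}$, i.e.\ $S=V_t$ and $S^\perp=V_{\bar t}$. (Incidentally, the genus-two miracle is already spent at that earlier step, where $\dim S^\perp=2$ forces $T|_{S^\perp}=\bar t\,\mathrm{Id}$; it is not waiting to be used in your ``hard case''.)

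The genuine gap is your last paragraph. The case you defer — $\omega$ not an eigenform for $T$ — is not a hard case awaiting the $\Hyp^2\times\Hyp^2$ theorem; under the naive reading of the statement (invariance of the full preimage $P^{-1}(W_d)\subset\mathcal{H}_2$, with arbitrary $\omega$ over a point of $W_d$) the claim is simply false. If $X\in W_d$ and $\omega$ is a generic form on $X$, then $\omega$ is an eigenform for no real multiplication (each order contributes only two eigenlines in the projectivized space of forms, and only countably many orders can occur), and by the classification recalled at the end of the same section such an orbit is typically dense in its stratum, whose projection is all of $\mathcal{M}_2$; so the projected orbit leaves the proper subvariety $W_d$. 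Your proposed fix cannot succeed, because the $\Hyp^2\times\Hyp^2$ parametrization records only the Hodge data (the complex structures on $V_t$ and $V_{\bar t}$) and carries no equivariance statement for a form whose plane $S$ straddles both eigenspaces: $J'$ is constrained to preserve $S$ and $S^\perp$, and when these differ from $V_t$ and $V_{\bar t}$ nothing forces $J'$ to preserve the eigenspaces, so the conjugated $T$ fails to be $\C$-linear. The correct resolution, as in McMullen, is definitional rather than argumentative: the invariant object is the locus of \emph{eigenform} pairs $(X,\omega)$, and the paper's construction only ever produces such pairs. With that convention your ``first case'' is the whole proof, and the obstacle you anticipated disappears.
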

\begin{proof}
Let $(X,\omega)$ be a translation surface of genus two whose Jacobian  has real multiplication by $\Q\left[ \sqrt{d}\right] $, for $d \in \N$ non-square,
let $T$ be an endomorphism of the Jacobian of $X$, and let $\gamma$ be an element of $\mathrm{SL}_2(\R)$. Then $(\gamma^*)^{-1}$ conjugates $T$ with an endomorphism of the Jacobian of $\gamma.(X,\omega)$, so $\gamma.(X,\omega)$ 
has real multiplication by $\Q\left[ \sqrt{d}\right] $.
\end{proof}
Recall that we  denote by $P$ the canonical projection 
$\mathcal{H}_2 \longrightarrow \mathcal{M}_2$. 
\begin{lemma}\label{interH(2)}
	The intersection  $P(\mathcal{H}(2)) \cap W_d$ is not empty.

\end{lemma}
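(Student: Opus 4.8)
The plan is to produce an explicit surface in the intersection, using the family already at hand. Take $a=\frac{1+\sqrt d}{2}$ (so $a>1$, since a non-square $d$ is at least $2$) and consider the L-shaped surface $L(a,1)$ of Figure \ref{La1}. My first step is to verify that $L(a,1)$ lies in $\mathcal{H}(2)$. Cutting the L into the octagonal polygon of Figure \ref{La1}, with the two straight sides subdivided at the marked points and sides glued by the indicated horizontal and vertical translations, one checks that all eight polygon vertices are identified to a single point; an Euler characteristic count then gives $\chi=1-4+1=-2$, so the genus is two, and the unique cone point has total angle equal to the sum of the interior angles of the polygon, namely $5\cdot\frac{\pi}{2}+\frac{3\pi}{2}+2\cdot\pi=6\pi$. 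Hence $L(a,1)\in\mathcal{H}(2)$.

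Second, I would invoke the computation already carried out in the Example following the trace-field lemma: the Veech group of $L(a,1)$ contains the product of the two horizontal and vertical Dehn-twist matrices, namely $\begin{pmatrix}1+16a^2 & 4a\\ 4a & 1\end{pmatrix}$, whose trace $2+16a^2=2+4d+8\sqrt d$ is an irrational element of $\Q[\sqrt d]$. Call this Veech-group element $\gamma$. Because the genus is two and $\gamma$ has irrational (hence quadratic) trace, the construction of the subsections on the Jacobian torus and on real multiplication applies verbatim: $T:=\gamma^*+(\gamma^*)^{-1}$ is a self-adjoint, $\C$-linear endomorphism of $Jac(L(a,1))$, and the subring it generates with the identity is isomorphic to a finite index subring of the ring of integers of $\Q[\sqrt d]$. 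Thus $Jac(L(a,1))$ has real multiplication by $\Q[\sqrt d]$, that is, $P(L(a,1))\in W_d$. Combining the two steps, $P(L(a,1))\in P(\mathcal{H}(2))\cap W_d$, which is therefore non-empty.

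The one point I would be careful about is that this argument does \emph{not} require $L(a,1)$ to be a genuine Veech surface: the real-multiplication construction consumes only a single Veech-group element of quadratic trace, and the Example produces one explicitly, so there is no need to appeal to the Smillie--Weiss theorem or to check finite covolume of the whole Veech group. The only genuinely computational step is the verification that $L(a,1)\in\mathcal{H}(2)$, and the mild subtlety there, easy to overlook, is that the two subdivision points on the straight sides each contribute an angle $\pi$ to the cone point, so the total angle is $6\pi$ rather than the $4\pi$ one would obtain from the six geometric corners of the L alone.
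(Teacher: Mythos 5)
Your proposal is correct and is exactly the paper's approach: the paper's entire proof is the one line ``Consider the surface $L(a,1)$, with $a=\frac{1+\sqrt{d}}{2}$,'' implicitly relying on the preceding Example and the real-multiplication construction, and you have simply filled in the details (the $6\pi$ cone-angle count placing $L(a,1)$ in $\mathcal{H}(2)$, and the Veech-group element of trace $2+4d+8\sqrt{d}$). Your closing observation is also right, and consistent with the paper: the construction of $T=\gamma^*+(\gamma^*)^{-1}$ only assumes a genus-two translation surface with one Veech-group element of irrational trace, not that the surface is Veech.
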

\begin{proof}
	Consider the surface $L(a,1)$, with $a=\frac{1+\sqrt{d}}{2}$.
\end{proof}
\begin{lemma}\label{interH(1,1)}
	The intersection $P(\mathcal{H}(1,1) )\cap W_d$ is not empty.
\end{lemma}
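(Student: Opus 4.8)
The plan is to produce the required surface by deforming the surface $L(a,1)$ of Lemma \ref{interH(2)} along the absolute period foliation, exactly as depicted on the right of Figure \ref{La1}, and to check that real multiplication survives this deformation.

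First I would recall what Lemma \ref{interH(2)} and the homological detour already give. For $a=\frac{1+\sqrt{d}}{2}$ the surface $(X,\omega)=L(a,1)$ carries a Veech group element $\gamma$ of irrational (hence quadratic) trace $t=Tr(\gamma)$, and the endomorphism $T=\gamma^*+(\gamma^*)^{-1}$ of $H^1(X,\R)$ is integral, self-adjoint, and $\C$-linear, so it generates real multiplication by $\Q\left[\sqrt{d}\right]$ on $Jac(X)$. Moreover $\omega$ is an eigenform: the tautological plane $S=\langle\Re(\omega),\Im(\omega)\rangle$ is precisely the $t$-eigenspace of $T$, and its symplectic orthogonal $S^{\perp}$ is the $\bar{t}$-eigenspace.

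Next I would move the top rectangle of $L(a,1)$ to the left, as in Figure \ref{La1}, obtaining a surface $(X',\omega')\in\mathcal{H}(1,1)$ with the same absolute periods. Since the deformation is a path inside a stratum, the homology is locally constant, so keeping the red $\alpha$-curves of Figure \ref{La1} fixed identifies the symplectic lattices $H_1(X,\Z)\cong H_1(X',\Z)$ in such a way that $\int_{c}\omega=\int_{c}\omega'$ for every absolute class $c$. Transporting $T$ through this identification yields an integral, self-adjoint endomorphism $T'$ of $H^1(X',\Z)$ with the same characteristic polynomial, hence eigenvalues $t,t,\bar{t},\bar{t}$. Because the absolute periods are unchanged, the new tautological plane $S'=\langle\Re(\omega'),\Im(\omega')\rangle$ is the image of $S$, so $T'$ acts on $S'$ as $t\cdot Id$; by dimension count $S'$ is the entire $t$-eigenspace and $(S')^{\perp}$ the entire $\bar{t}$-eigenspace.

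The key point, and the only place genus two is really used, is that this automatically makes $T'$ an endomorphism of $Jac(X')$. Indeed $S'$ is a complex line (being tautological), and the symplectic orthogonal of a complex subspace is again complex, so $(S')^{\perp}$ is a complex line as well; since $T'$ is scalar on each of them, it is $\C$-linear, exactly as in the homological detour. Thus $T'$ gives real multiplication by $\Q\left[\sqrt{d}\right]$ on $Jac(X')$, so $P(X',\omega')\in W_d$, while $(X',\omega')\in\mathcal{H}(1,1)$, whence $P(\mathcal{H}(1,1))\cap W_d\neq\emptyset$. The obstacle to be careful with is precisely the assertion that real multiplication is detected by the absolute periods alone: the complex structure on $Jac$ genuinely changes along the foliation, and what saves the argument is the automatic $\C$-linearity above, not any control of that complex structure.
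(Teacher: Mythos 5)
Your proof is correct, and your witness surface is exactly the paper's: the surface on the right of Figure \ref{La1}, obtained by shifting the upper cylinder of $L(a,1)$. But your justification is genuinely different from the paper's, which disposes of the lemma in one line: same absolute periods, hence \emph{the same Jacobian}, ``since Jacobians ignore relative periods''. Taken literally, that sentence is too strong: along the kernel-foliation leaf the underlying curve visibly changes, so by Torelli the Jacobian, as a polarized complex torus, changes with it; what is actually constant is the topological package --- the lattice $H^1(X,\Z)$ with its symplectic form, the transported endomorphism $T'$, and the tautological plane $S'=S$ with its complex structure --- while the complex structure on $S^{\perp}$ is precisely the modulus that moves. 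Your argument supplies the content behind the paper's slogan: real multiplication is detected by the absolute periods together with the integral lattice action, because in genus two the $\C$-linearity of $T'$ is automatic ($S'$ is a complex line on which $T'$ is scalar, and $(S')^{\perp}$ is again complex by compatibility of the symplectic and complex structures). So the paper's proof buys brevity and a memorable heuristic, while yours is the version that survives scrutiny --- it is in substance McMullen's invariance of the eigenform locus under the kernel foliation --- and you correctly flag the exact point where a naive reading would fail. One small step you should make explicit: your ``dimension count'' showing $T'=\bar{t}\, Id$ on $(S')^{\perp}$ needs semisimplicity, since a symplectically self-adjoint operator on a $2$-dimensional space with a double eigenvalue need not be scalar (it can be $\bar{t}\,Id$ plus a nilpotent); this is harmless here because $T'$ is conjugate to $T$, whose minimal polynomial is the separable quadratic minimal polynomial of $t=Tr(\gamma)$, so $T'$ is semisimple, its $t$-eigenspace is exactly $S'$, and the restriction to $(S')^{\perp}$ is then forced to be $\bar{t}\,Id$.
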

\begin{proof}
	Consider the surface on the right of Figure \ref{La1},   obtained from $L(a,1)$ by shifting the upper cylinder to the left. This surface has the same absolute periods as $L(a,1)$, hence it has the same Jacobian, since Jacobians ignore relative periods. Hence it lies in $W_d$.
\end{proof}
By Lemmata \ref{invarSL2} and \ref{interH(2)}, $P(\mathcal{H}(2)) \cap W_d$ is a reunion of $\mathrm{SL}_2(\R)$-orbits, so its dimension is at least one. If its dimension were two, since two is the dimension of $W_d$, and the latter is connected   as a quotient of  $\Hyp^2 \times \Hyp^2$, then $\mathcal{H}(2) \cap W_d$ would be the whole of $W_d$. But this is impossible by Lemma \ref{interH(1,1)}. So the dimension of $P(\mathcal{H}(2)) \cap W_d$ is one, and since it is an algebraic subvariety of $W_d$, it must be a finite union of closed orbits. There, we have found Veech surfaces in $\mathcal{H}(2)$, with trace field  $\Q\left[ \sqrt{d}\right] $ for any non-square $d \in \N$, and proved that there are only finitely many of them, for each $d$. We have also proved that the surface $L(a,1)$ is a Veech surface for  $a=\frac{1+\sqrt{d}}{2}$, just by computing its trace field. 

Again, McMullen's results actually say much more: the intersection $P(\mathcal{H}(2)) \cap W_d$ contains exactly one orbit, except when $d=1 \mod 8$, $d \neq 9$, in which case there are exactly two (see \cite{Mc}). The intersection $P(\mathcal{H}(1,1)) \cap W_d$ contains no closed orbit, except when $d=5$, in which case it contains exactly one, the Teichm\H{u}ller curve of the regular decagon (see \cite{Mc05bis, Mc06}). An orbit closure is either the orbit itself, in which case it is  a Teichm\H{u}ller curve, of which we have a complete list,  or $W_d$ for some $d$, or a whole stratum (see \cite{Mc07}). Note that in genus two, both strata project surjectively to the moduli space $\mathcal{M}_2$ (see \cite{FK}, III.7.5, Corollary 1), so an orbit which is dense in its stratum projects to a dense subset of $\mathcal{M}_2$.

\section{What is known in higher genus?}
First, a bit of vocabulary: a Veech surface is said to be \textbf{geometrically primitive\index{geometrically primitive}\index{primitive!geometrically}} if it is not a ramified cover of a Veech surface of lower genus. For instance, square-tiled surfaces, other than tori, are not geometrically primitive since they cover the torus. A Veech surface is said to be \textbf{algebraically primitive\index{algebraically primitive}\index{primitive!algebraically}} if its trace field has maximal possible degree, which is the genus of the surface. Veech's family (the regular polygons) and McMullen's family in genus $2$ are both algebraically and geometrically primitive, as well as Ward's (see \cite{Ward}). 

The general philosophy of Veech surface hunters seems to be that algebraically and geometrically primitive Veech surfaces are scarce, and you should expect at most finitely many of them in a given stratum of genus $>2$. In \cite{MW} this very statement is proved, for the minimal stratum of each genus $>2$.

In \cite{BM} a family of Veech surfaces is found, which generalizes Veech's family.
In \cite{Mc06b}, \cite{LN}, the authors find a way around the fact that McMullen's techniques do not generalize in higher genus, and discover a family of Veech surfaces in genera $3$ and $4$, which generalizes McMullen's family, in the sense that their trace fields are quadratic (so they are not algebraically primitive).
In  \cite{MMW},  \cite{EMMW}, another family of Veech surfaces in genus  $4$ is found,  again with quadratic trace fields.

\bigskip

\noindent \textbf{Adress}: 

\noindent Daniel Massart, Institut Montpelliérain Alexander Grothendieck, Universit\'e de  Montpellier, France 

\noindent email: daniel.massart@umontpellier.fr

\end{document}